\numberwithin{equation}{section}
\newcommand{\Ca}{\mathcal{A}}
\newcommand{\Cd}{\mathcal{D}}
\newcommand{\Ci}{\mathcal{I}}
\newcommand{\Co}{\mathcal{O}}
\newcommand{\Cr}{\mathcal{R}}
\newcommand{\Cs}{\mathcal{S}}
\newcommand{\Cx}{\mathcal{X}}
\newcommand{\Cy}{\mathcal{Y}}
\newcommand{\Cn}{\mathcal{N}}
\newcommand{\Cb}{\mathcal{B}}
\newcommand{\Ce}{\mathcal{E}}
\newcommand{\Cw}{\mathcal{W}}
\newcommand{\Cv}{\mathcal{V}}
\newcommand{\Ct}{\mathcal{T}}
\newcommand{\Cz}{\mathcal{Z}}
\newcommand{\Cq}{\mathcal{Q}}
\newcommand{\Cg}{\mathcal{G}}
\newcommand{\Cc}{\mathcal{C}}
\newcommand{\Cp}{\mathcal{P}}
\newcommand{\Cu}{\mathcal{U}}
\newcommand{\Cf}{\mathcal{F}}
\newcommand{\Mm}{\bm{\mathcal{M}}}
\newcommand{\Nn}{\bm{\mathcal{N}}}
\newcommand{\M}{\mathbf{M}}
\newcommand{\N}{\mathbf{N}}
\newcommand{\Supp}{\mathrm{Supp}}
\newcommand{\vol}{\mathrm{vol}}
\newcommand{\mult}{\mathrm{mult}}
\newtheorem{thm}{Theorem}[section]
\newtheorem{cor}[thm]{Corollary}
\newtheorem{lem}[thm]{Lemma}
\newtheorem{prop}[thm]{Proposition}
\newtheorem{claim}[thm]{Claim}
\theoremstyle{definition}
\newtheorem{defn}[thm]{Definition}
\theoremstyle{definition}
\newtheorem{rem}[thm]{Remark}
\theoremstyle{definition}
\begin{document}
	
	\title{Boundedness of polarized log Calabi--Yau fibrations}
	\author{Junpeng Jiao}

	\address{Yau Mathematical Sciences Center, Tsinghua University, Hai Dian District, Beijing, China, 100084}
	\email{jiao$\_$jp@mail.tsinghua.edu.cn}

	\date{\today}

	\begin{abstract}
		In this paper, we investigate the boundedness of log pairs with log Calabi--Yau fibration structures. We prove that total spaces of log Calabi--Yau fibrations are bounded modulo crepant birational equivalence when the Iitaka volumes of log canonical divisors are bounded and general fibers are in a bounded family of polarized log Calabi--Yau pairs.
	\end{abstract}
	
	\maketitle
	\setcounter{tocdepth}{1}
	\tableofcontents
	
	\section{Introduction}
	We work over the field of complex numbers $\mathbb{C}$. 
	
	Despite extensive research into boundedness properties of canonically polarized varieties and Fano varieties, leads to recent advances \cite{HMX18}\cite{Bir19}\cite{Bir21a}, relatively little is known about Calabi--Yau varieties. In this paper, our focus is on the boundedness of varieties $X$ with a fibration structure $f:X\rightarrow Z$ and a $\mathbb{Q}$-divisor $\Delta$ on $X$, such that $K_X+\Delta\sim_{\mathbb{Q}, Z} 0$ and a general fiber $(X_g,\Delta_g)$ is a log Calabi--Yau pair. We mention that a log Calabi--Yau pair refers to an lc pair $(Y,D)$ such that $K_Y+D\sim_{\mathbb{Q}} 0$. 
	
	Such varieties arise naturally in the minimal model program. For instance, varieties of intermediate Kodaira dimension are predicted by the abundance conjecture to admit a minimal model $X'$, where a suitable positive power of $K_{X'}$ is base point free and defines a morphism $f: X'\rightarrow Z$, which is equal to the Iitaka fibration. The fibers of $f$ have numerically trivial canonical divisors, and the base $Z$ is naturally endowed with the structure of a log general type pair. 
	
	Recent work by Brikar has analyzed the case when the Iitaka fibration has fibers of Fano type in \cite{Bir18}, and the results in \cite{Li20} apply to the study of the base of fibrations of Fano type. The boundedness of varieties with an elliptic fibration is considered in \cite{Bir18}, \cite{BDCS20}, \cite{CDCHJ18}, and \cite{Fil20}.

	\begin{defn}\label{definition of good minimal models}
		Let $d$ be a positive integer, $\Ci\subset [0,1]\cap\mathbb{Q}$ a DCC set, and $v,u$ two positive rational numbers. We define $\Cg_{klt}(d,\Ci,v,u)$ to be the set of log pairs $(X,\Delta)$ such that
		\begin{itemize}
			\item $(X,\Delta)$ is a projective $d$-dimensional klt pair,
			\item $\mathrm{coeff}(\Delta)\subset \Ci$, 
			\item $X$ has a fibration $f:X\rightarrow Z$ to a projective normal variety,
			\item $K_X+\Delta\sim _{\mathbb{Q}}f^*H$ for an ample $\mathbb{Q}$-divisor $H$ on $Z$,
			\item $\mathrm{vol}(H)=v$, and
			\item there exists an integral divisor $A$ on $X$, such that for a general fiber $X_g$ of $f$, $A_g:=A|_{X_g}$ is ample, and $\vol(A_g)=u$.
		\end{itemize}
	\end{defn}
	\begin{thm}\label{main theorem 1}
		Let $d$ be a positive integer, $\Ci\subset [0,1]\cap\mathbb{Q}$ a DCC set, and $v,u$ two positive rational numbers. Then
		$$\Cg_{klt}(d,\Ci,v,u)$$
		is bounded modulo crepant birational equivalence.
	\end{thm}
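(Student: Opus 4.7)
The plan is to reduce boundedness of $(X,\Delta)\in\Gg_{klt}(d,\Ii,v,u)$ to boundedness of the base $Z$ of the contraction $f$ together with boundedness of the general fibers, via the canonical bundle formula, and then to glue these two pieces of information into boundedness of the total space modulo crepant birational.

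First I would apply the canonical bundle formula to the semi-ample contraction $f:X\rightarrow Z$ defined by $K_X+\Delta$. This produces a generalised klt pair $(Z, B_Z+M_Z)$ with
$$K_X+\Delta\sim_{\Qq} f^*(K_Z+B_Z+M_Z),$$
and $\vol(K_Z+B_Z+M_Z) = \Ivol(K_X+\Delta) = v$. By the DCC property for adjunction coefficients (Birkar--Zhang, Filipazzi), the coefficients of $B_Z$ lie in a fixed DCC set $\Jj = \Jj(d,\Ii)$. Since $K_X+\Delta$ is semi-ample and defines $f$, the class $K_Z+B_Z+M_Z$ is ample on $Z$, so $(Z,B_Z+M_Z)$ is a generalised log canonical model of log general type. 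By known boundedness for such generalised pairs (Birkar, building on Hacon--McKernan--Xu and Filipazzi), $(Z, B_Z)$ lies in a log bounded family, providing in particular a uniform very ample divisor $H_Z$ on $Z$ of bounded degree.

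Next I would bound the general fibers. By hypothesis, a general fiber $(X_g,\Delta_g)$ is a klt log Calabi--Yau pair with coefficients in $\Ii$ and an ample polarisation $A_g$ of fixed volume $u$; by boundedness of polarised log Calabi--Yau pairs with DCC coefficients (Birkar's recent work on polarised CY pairs), these general fibers form a bounded family.

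To combine, for a sufficiently large integer $N$ depending only on the fixed data, I would consider the divisor $A+Nf^*H_Z$ on $X$. Over general fibers it restricts to the polarisation $A_g$, while its pushforward is a bounded-degree ample divisor on $Z$; together these should give a polarisation of bounded volume on a crepant birational model of $X$. After running a relative MMP over $Z$ to obtain a good minimal model (which exists for klt pairs with semi-ample log canonical class), I expect to recover a crepant birational model of $(X,\Delta)$ lying in a bounded family. The hardest step is this last one: the divisor $A$ is only ample relative to $f$, and one must simultaneously control the horizontal behaviour of $\Delta$ across non-general fibers (using the DCC assumption together with invariance-of-plurigenera--type techniques) and the Cartier index of the moduli b-divisor $M_Z$ on a bounded resolution (using effective adjunction once the base and fibers are both bounded). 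The \emph{modulo crepant birational} flexibility in the conclusion is essential, as it permits replacing $X$ by its good minimal model over $Z$ without tracking the precise birational type.
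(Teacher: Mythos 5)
Your high-level framework matches the paper's: apply the canonical bundle formula to get a generalised pair $(Z,B_Z+M_Z)$ of log general type on the base, bound $(Z,B_Z+M_Z)$, bound the general fibers as polarised log Calabi--Yau pairs, then combine and upgrade birational boundedness to boundedness modulo crepant birational via deformation invariance of good minimal models (Theorem \ref{HMX18 1.2}). These steps correspond to the bookends of the paper's argument (Theorem \ref{bounded family and good minimal model} and the final proof of Theorem \ref{main theorem 1}).

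However, the ``combine'' step, which you correctly flag as the hardest, is where your proposal has a genuine gap, and your suggested fix would fail. You propose to polarise $X$ (or a crepant model) by $A+Nf^*H_Z$ and argue this has bounded volume. This is not true and is not even close to the mechanism needed: $A$ is only ample along a general fiber, so one has no a priori control of its intersection numbers with $f$-horizontal curves or of $\vol(A+Nf^*H_Z)$, and more fundamentally, bounded base plus bounded general fiber simply does not imply boundedness of the family. Think of $\mathbb{P}(\mathcal{O}\oplus\mathcal{O}(n))\to\mathbb{P}^1$: base and fiber are fixed, family is unbounded. The real obstruction is controlling the \emph{classifying map} from $Z$ to a parameter space of the fibers, and nothing in your sketch addresses it.

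The paper's solution (the whole of Theorem \ref{bounded base implies bounded fibration}) is built precisely to control this classifying map, and uses machinery absent from your proposal. Concretely: it embeds the polarised fibers into a Hilbert scheme $\hh$ with universal family (Remark \ref{Parametrizing space for log pairs}, Theorem \ref{moduli map}); invokes Ambro's structure theorem for the moduli $\mathbf{b}$-divisor (Theorem \ref{moduli part is nef and good}), producing an ``isotrivialising'' base $\sss^!$ with $\hh^!\subset\sss^!$ and a $\mathbf{b}$-nef and big moduli divisor $\mathbf{M}^!$; proves $\hh^!$ is \emph{weakly bounded} (Lemma \ref{decompose to get weakly bounded}) and applies the Kov\'acs--Lieblich parametrisation theorem (Theorem \ref{weakly bounded morphisms are bounded}) to bound the moduli maps $\uu_t\to\hh^!$ by a finite-type scheme; and then carries out a delicate sequence of volume estimates (Steps 3 and 9 of the proof) to show that the induced log general type pair on a birational model of $Y$ has volume bounded by a constant, so that \cite{HMX13} applies. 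The crucial conceptual point your proposal misses is that the Iitaka volume $v$ bounds not just $\vol(K_Z+B_Z+M_Z)$ but, through the moduli $\mathbf{b}$-divisor $M_Z$ and its descent to $\sss^!$, also the ``degree'' of the classifying map, and that extracting a bounded polarisation from this requires the weak boundedness formalism, not a relative MMP applied to $A+Nf^*H_Z$.
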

	
	We say a variety $Y$ is a Calabi--Yau variety if it is smooth, projective, simply connected, $K_Y\sim 0$ and $H^i(Y,\mathcal{O}_Y)=0$ for $0<i<\mathrm{dim}Y$.
	\begin{thm}\label{boundedness of Calabi--Yau with polarized fibration structure}
		Let $d$ be a positive integer and $u$ a positive rational number. Then the set of projective varieties $Y$ such that
		\begin{itemize}
			\item $Y$ is a Calabi--Yau variety of dimension $d$,
			\item there exists a fibration $f: Y\rightarrow Z$, and
			\item there is an integral divisor $A$ on $Y$, such that for any general fiber $Y_g$ of $f$, $A_g:=A|_{Y_g}$ is ample, and $\vol(A_g)=u$,
		\end{itemize}
		is bounded modulo flops. 
	\end{thm}
	For definition of boundedness up to birational equivalence or flops, see Definition \ref{birationally boundedness}.
	
	The existence of the divisor $A$ is a natural condition in many cases. For example, if a general fiber $X_g$ is Fano, then we can choose $A$ to be $-K_X$, or if $f$ is an elliptic fibration with a rational multi-section of fixed degree, then we may define $A$ to be the closure of the rational multi-section. Therefore, we have the following two direct applications of Theorem \ref{main theorem 1} and Theorem \ref{boundedness of Calabi--Yau with polarized fibration structure}.
	
	\begin{cor}\label{cor 1}
		Let $d,k$ be two positive integers, $\Ci\subset [0,1]\cap\mathbb{Q}$ a DCC set, and $v,u$ two positive rational numbers. Suppose $(X,\Delta)$ is a log pair such that 
		\begin{itemize}
			\item $(X,\Delta)$ is a $d$-dimensional klt pair,
			\item $\mathrm{coeff}(\Delta)\subset \Ci$,
			\item $X$ has a fibration $f:X\rightarrow Z$ to a projective normal variety,
			\item $K_X+\Delta\sim _{\mathbb{Q}}f^*H$ for an ample $\mathbb{Q}$-divisor $H$ on $Z$, and
			\item $\mathrm{vol}(H)=v$.
		\end{itemize}
		Suppose either $-K_{X_g}$ is ample for a general fiber $X_g$, or $f$ is an elliptic fibration with a rational multi-section of degree $k$, then $(X,\Delta)$ is bounded modulo crepant birational equivalence.
	\end{cor}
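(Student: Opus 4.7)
The plan is to reduce both cases of the corollary to Theorem \ref{main theorem 1} by producing, in each case, an integral divisor $A$ on $X$ whose restriction to a general fibre of $f$ is ample with some fixed volume $u$; once this is done, $(X,\Delta)$ lands in $\Gg_{klt}(d,\Ii,v,u)$ and boundedness modulo crepant birational isomorphism follows immediately.

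\emph{Elliptic case.} Let $\Sigma\subset X$ denote the closure of the image of the rational multi-section $i\colon Z\dashrightarrow X$. Since $\dim\Sigma=\dim Z$ and $f$ has one-dimensional fibres, $\Sigma$ is a prime Weil divisor on $X$. By the definition of a multi-section of degree $k$, the induced map $\Sigma\to Z$ is generically finite of degree $k$, so for a general fibre $X_g$ (which is an elliptic curve, as $(X_g,\Delta_g)$ is a one-dimensional klt log Calabi-Yau pair), $\Sigma|_{X_g}$ is a divisor of degree $k$, hence ample with $\vol(\Sigma|_{X_g})=k$. Take $A:=\Sigma$ and apply Theorem \ref{main theorem 1} with $u=k$.

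\emph{Fano case.} Restricting $(X,\Delta)$ to a general fibre gives a klt pair $(X_g,\Delta_g)$ with $\mathrm{coeff}(\Delta_g)\subset\Ii$ and $K_{X_g}+\Delta_g\sim_{\mathbb Q}0$; ampleness of $-K_{X_g}$ forces $\Delta_g\sim_{\mathbb Q}-K_{X_g}$ to be ample as well. By Birkar's boundedness theorem for klt Calabi-Yau pairs with ample boundary and DCC coefficients, the family of such $(X_g,\Delta_g)$ is bounded. Consequently there exists a uniform positive integer $N$, depending only on $d$ and $\Ii$, such that $-NK_{X_g}$ is integral, Cartier and ample on every general fibre, and the volume $\vol(-NK_{X_g})$ takes only finitely many values $u_1,\ldots,u_m$. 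Partition the family of the corollary according to which $u_j$ occurs, take $A:=-NK_X$ (a well-defined integral Weil divisor, since $K_X$ is $\mathbb Q$-Cartier), and apply Theorem \ref{main theorem 1} to each of the finitely many subfamilies.

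The principal obstacle is the Fano case, where controlling the general fibres uniformly — both extracting a single $N$ and reducing to finitely many values of $\vol(-K_{X_g})$ — requires the full strength of Birkar's boundedness theorems for klt Calabi-Yau pairs with ample boundary and DCC coefficients. In the elliptic case the multi-section supplies the polarization directly, with the volume $u=k$ already fixed by hypothesis, so no auxiliary boundedness input is needed.
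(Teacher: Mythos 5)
Your proposal is essentially correct and follows the approach the paper only sketches in the remark preceding the corollary (``if the general fibre $X_g$ is Fano, then $A=-K_X$, \dots then $A$ is the closure of $i(Z)$''); the paper gives no formal proof, so your job is to fill in the reduction to Theorem~\ref{main theorem 1}, which you do.

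Two points worth tightening. In the elliptic case your phrasing treats a degree-$k$ multi-section as a rational map $i\colon Z\dashrightarrow X$; that would force degree~$1$. The correct reading is that the multi-section is a prime horizontal divisor $\Sigma\subset X$ with $f|_\Sigma\colon\Sigma\to Z$ generically finite of degree $k$, which is what you in fact use: $\Sigma|_{X_g}$ then has degree $k$ on the general elliptic fibre and $\vol=k$. In the Fano case you correctly notice a gap the paper leaves silent: $\Gg_{klt}(d,\Ii,v,u)$ requires the fibre volume $u$ to be \emph{fixed}, while the corollary does not supply $\vol(-K_{X_g})$ as data (curiously, the statement lists an unused parameter $u$, which suggests the paper may have intended $\vol(-K_{X_g})=u$ to be part of the hypothesis). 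Your fix --- deduce boundedness of the general fibres and hence finiteness of the set of possible volumes --- is the right idea, but the citation should be made precise. The cleanest route is: by the global ACC \cite[Theorem 1.5]{HMX14} the coefficients of $\Delta_g$ lie in a finite subset of $\Ii$; then, since $\Delta_g\sim_{\mathbb Q}-K_{X_g}$ is ample (in particular big), one invokes Birkar's boundedness of klt log Calabi--Yau pairs of Fano type with coefficients in a fixed finite set (a consequence of \cite{Bir19,Bir21a}, also recorded as a boundedness statement in \cite{Bir18}). That gives a bounded family, and then constancy of top self-intersection in flat families yields the finite list of volumes. Once $u$ is pinned down in finitely many subcases, the application of Theorem~\ref{main theorem 1} is as you wrote.
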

	
	\begin{cor}  \label{cor 2}
		Let $d,k$ be two positive integers and $u$ a positive rational number. Then the set of projective varieties $Y$ such that
		\begin{itemize}
			\item $Y$ is a Calabi--Yau variety of dimension $d$,
			\item there exists an elliptic fibration $f: Y\rightarrow Z$, and
			\item $f$ has a rational multi-section of degree $k$,
		\end{itemize}
		is bounded modulo flops. 
	\end{cor}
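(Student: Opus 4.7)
The plan is to deduce Corollary~\ref{cor 2} as an immediate special case of Theorem~\ref{boundedness of Calabi-Yau with polarised fibration structure} by manufacturing a natural polarising divisor $A$ out of the given rational section. Concretely, I would set $A$ to be the Zariski closure in $Y$ of $i(U)$, where $U\subset X$ is the dense open locus on which the rational map $i\colon X\dashrightarrow Y$ is regular. Because $f\circ i=\id_U$ and $f$ is separated, the morphism $i\colon U\to f^{-1}(U)$ is a section of a separated morphism, hence a closed immersion; so $i(U)$ is irreducible of dimension $\dim X=d-1$, and its closure $A$ is an integral Weil divisor on $Y$, which is automatically Cartier because $Y$ is smooth.

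Next I would verify the polarisation hypothesis of Theorem~\ref{boundedness of Calabi-Yau with polarised fibration structure}. For a general point $g\in X$ we have $g\in U$, and the fibre $Y_g$ is a smooth elliptic curve. Since $i$ is a section over $U$, the scheme-theoretic intersection $A\cap Y_g$ is computed by the pullback of the identity $U\to U$ along $\{g\}\hookrightarrow U$, which is the reduced point $\{i(g)\}$. Therefore $A_g:=A|_{Y_g}$ has degree $1$ on $Y_g$, is ample (any divisor of positive degree on an elliptic curve is ample), and satisfies $\vol(A_g)=1$.

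With these data in hand, applying Theorem~\ref{boundedness of Calabi-Yau with polarised fibration structure} to the triple $(Y,f,A)$ with parameter $u=1$ yields that the family in question is bounded modulo flop. There is essentially no obstacle beyond the bookkeeping above, since the genuine geometric content of Corollary~\ref{cor 2}, namely that a rational section produces a fibrewise ample divisor of controlled volume, is precisely the input already absorbed into Theorem~\ref{boundedness of Calabi-Yau with polarised fibration structure}. The only point that warrants any care is the reducedness of $A\cap Y_g$, which is exactly why I restrict to a general $g\in U$ rather than allowing indeterminacy of $i$ to interfere.
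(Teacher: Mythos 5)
Your proposal is correct and coincides with the paper's own (essentially one-line) derivation: the paper likewise sets $A$ to be the Zariski closure of the rational section's image, observes that its restriction to a general fibre is a single point of degree one, and then cites Theorem~\ref{boundedness of Calabi-Yau with polarised fibration structure} with $u=1$.
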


	Corollary \ref{cor 1} can induce a birational boundedness version of \cite[Theorem 1.2]{Bir18} and \cite[Theorem 1.1]{Fil20}. Corollary \ref{cor 2} gives another proof of \cite[Theorem 1.2]{BDCS20}. 
	
	Note that Calabi--Yau varieties may have infinitely many models that are isomorphic in codimension one, so the boundedness modulo flops does not imply boundedness. A celebrated conjecture, the Kawamata--Morrison Conjecture, predicts that the isomorphism types of such models are just finitely many distinct ones.

	\noindent\textbf{Acknowledgement}. The author would like to thank his advisor Christopher D. Hacon for his encouragement and constant support. He would like to thank Chuanhao Wei, Jingjun Han, Jihao Liu, and Yupeng Wang for their helpful comments. The author was partially supported by NSF research grant no: DMS-1952522 and by a grant from the Simons Foundation; Award Number: 256202.

	\section{Preliminaries}
	
	\subsection{Conventions}
	We will use the notation in \cite{KM98} and \cite{Laz04}.
	
	Let $\Ci \subset \mathbb{R}$ be a subset, we say $\Ci$ satisfies the descending chain condition (DCC) if there is no strictly decreasing infinite subsequence in $\Ci$.
	
	A contraction is a projective morphism $f:X\rightarrow Z$ with $f_*\mathcal{O}_{X}=\mathcal{O}_Z$, hence it is surjective with connected fibers. A fibration means a contraction $X\rightarrow Z$ such that $\mathrm{dim}X>\mathrm{dim}Z$. A birational contraction is a birational map $\phi:X\dashrightarrow Z$ such that $\phi^{-1}$ does not contract any divisor.
	
	Throughout this paper, for a fibration $X\rightarrow Z$, we use $X_\eta$ to denote the generic fiber and $X_g$ to denote a general fiber. For a $\mathbb{Q}$-divisor $\Delta$ on $X$, we write $\Delta_\eta:=\Delta|_{X_\eta}$ and $\Delta_g:=\Delta|_{X_g}$.
	
	\subsection{Divisors}
	For a birational map $f: Y\dashrightarrow X$ and a divisor $B$ on $X$, $f_*^{-1}(B)$ denotes the strict transform of $B$ on $Y$, and $\mathrm{Exc}(f)$ denotes the sum of reduced exceptional divisors of $f$.
	
	For a $\mathbb{Q}$-divisor $D$, the map defined by the linear system $|D|$ means the map defined by $|\lfloor D\rfloor|$. Given two $\mathbb{Q}$-divisors $A,B$, $A\sim_{\mathbb{Q}} B$ means that there is an integer $m>0$ such that $m(A-B)\sim 0$. Let $D:=\sum_{i=1}^k a_i D_i$ be a $\mathbb{Q}$-divisor, where $\{D_1,\cdots, D_k\}$ are the irreducible components of $\Supp (D)$, we define $\mathrm{coeff}(\Delta)$ to be the set $\{a_1,\cdots, a_k\}$.
	
	Let $D$ be a $\mathbb{Q}$-divisor, we write $D=D_{> 0}-D_{< 0}$ as the difference of the effective part and negative part. Let $f:X\rightarrow Z$ be a fibration and $D$ a $\mathbb{Q}$-divisor on $X$, we write $D=D_v+D_h$, where $D_v$ is the $f$-vertical part and $D_h$ is the $f$-horizontal part. We say a closed subvariety $W\subset X$ is vertical over $Z$ if $f(W)\subsetneqq Z$, horizontal over $Z$ if $f(W)=Z$.
	
	Let $f:X\rightarrow Y$ be a fibration of normal varieties, $D$ a $\mathbb{Q}$-divisor on $X$. We say that $D$ is $f$-very exceptional if $D$ is $f$-vertical and for any prime divisor $P$ on $Y$ there is a prime divisor $Q$ on $X$ which is not a component of $D$ but $f(Q)=P$, i.e. over the generic point of $P$ we have: $\Supp (f^*P) \not\subset \Supp (D)$.

	We define the Iitaka volume of a $\mathbb{Q}$-divisor by analogy with the definition of the volume.
	Let $X$ be a normal projective variety and $D$ be a $\mathbb{Q}$-Cartier $\mathbb{Q}$-divisor. When the Iitaka dimension $\kappa(D)$ of $D$ is non-negative, the Iitaka volume of $D$ is defined to be
	\begin{equation}
		\mathrm{Ivol}(D):=\limsup_{m\rightarrow \infty}\frac{\kappa(D)! h^0(X,\Co_X(\lfloor mD\rfloor))}{m^{\kappa(D)}}.
	\end{equation}
	\subsection{Pairs}
	
	A sub-log pair $(X,\Delta)$ consists of a normal variety $X$ and a $\mathbb{Q}$-divisor $\Delta$ on $X$ such that $K_X+\Delta$ is $\mathbb{Q}$-Cartier. A sub-log pair $(X,\Delta)$ is called a log pair if $\Delta\geq 0$. If $g: Y\rightarrow X$ is a projective birational morphism and $E$ is a divisor on $Y$, the discrepancy $a(E,X,\Delta)$ is $-\mathrm{coeff}_{E}(\Delta_Y)$, where $K_Y+\Delta_Y :=g^*(K_X+\Delta) $. Let $a\in (0,1)$ be a rational number, a sub-log pair $(X,\Delta)$ is call sub-klt (resp. sub-lc, sub-$a$-lc) if for every projective birational morphism $Y\rightarrow X$ as above, $a(E,X,\Delta)>-1$ (resp. $\geq -1,\geq -1+a$) for every divisor $E$ on $Y$. A sub-log pair $(X,\Delta)$ is called klt (resp. lc, $a$-lc) if $(X,\Delta)$ is sub-klt (resp. sub-lc, sub-$a$-lc) and $(X,\Delta)$ is a log pair.

	Let $(Y,\Delta_Y)$ and $(X,\Delta)$ be two sub-log pairs and $h:Y\rightarrow X$ a projective birational morphism, we say $(Y,\Delta_Y)\rightarrow (X,\Delta)$ is a crepant birational morphism if $K_Y+\Delta_Y\sim_{\mathbb{Q}}h^*(K_X+\Delta)$, two sub-log pairs $(X_i,\Delta_i),i=1,2$ are crepant birationally equivalent if there is a sub-log pair $(Y,\Delta_Y)$ and two crepant birational morphisms $(Y,\Delta_Y)\rightarrow (X_i,\Delta_i),i=1,2$.
	
	A generalized log pair $(X,\Delta+M)$ consists of a normal projective variety $X$ equipped with a projective birational morphism $X'\xrightarrow{f} X$ where $X'$ is normal, a $\mathbb{Q}$-divisor $\Delta$, and a $\mathbb{Q}$-Cartier nef divisor $M'$ on $X'$ such that $K_{X}+\Delta+M$ is $\mathbb{Q}$-Cartier, where $M=f_*M'$. Let $\Delta'$ be the $\mathbb{Q}$-divisor such that $K_{X'}+\Delta'+M'=f^*(K_X+\Delta+M)$ and $a\in (0,1)$ a rational number, for a prime divisor $P$ over $X$, we define the generalized discrepancy of $P$ with respect to $(X,\Delta+M)$ as $a(P,X,\Delta+M):=a(P,X',\Delta')$ and we call $(X,\Delta+M)$ a generalized sub-klt (resp. generalized sub-lc, generalized sub-$a$-lc) pair if $(X',\Delta')$ is sub-klt (resp. sub-lc, sub-$a$-lc), a generalized klt (resp. generalized lc, generalized $a$-lc) pair if further $\Delta\geq 0$.

	Let $(X,\Delta)$ be a sub-lc pair (resp. $(X,\Delta+M)$ be a generalized sub-lc pair). A log canonical place of $(X,\Delta)$ (resp. a generalized log canonical place of $(X,\Delta+M)$) is a prime divisor $D$ over $X$ such that $a(D,X,\Delta)=-1$ (resp. $a(D,X,\Delta+M)=-1$). A log canonical center (resp. a generalized log canonical center) is the image on $X$ of a log canonical place (resp. a generalized log canonical place). Similarly, a log place of $(X,\Delta)$ (resp. a generalized log place of $(X,\Delta+M)$) is a prime divisor $D$ over $X$ such that $a(D,X,\Delta)\in [-1,0)$ (resp. $a(D,X,\Delta+M)\in [-1,0)$). A log center (resp. a generalized log center) is the image on $X$ of a log place (resp. a generalized log place).

	A log smooth pair is a log pair $(X,\Delta)$ where $X$ is smooth and $\Supp (\Delta)$ is a simple normal crossing divisor. Assume $(X,\Delta)$ is a log smooth pair and assume $\Supp (\Delta)=\sum_1^r\Delta_i$, where $\Delta_i$ are the irreducible components of $\Delta$. A stratum of $(X,\Delta)$ is an irreducible component of $\bigcap_{i\in I}\Delta_i$ for some $I\subset {1,\cdots,r}$. If $\mathrm{coeff}(\Delta)=1$, a stratum is a log canonical center of $(X,\Delta).$

	\begin{defn}\label{birationally boundedness}
		We say that a set $\mathscr{X}$ of log pairs is bounded modulo crepant birationally equivalence (respectively flops) if there is a projective morphism $\Cw\rightarrow \Ct$, where $\Ct$ is of finite type, and a $\mathbb{Q}$-divisor $\Cb$ on $\Cw$ such that for every $(X,\Delta)\in \mathscr{X}$, there is a closed point $t\in \Ct$ such that $(\Cw_t,\Cb_t)$ is crepant birationally equivalent (respectively crepant birationally equivalent and isomorphic in codimension one) to $(X,\Delta)$.
	\end{defn}

	\subsection{$\mathbf{b}$-divisors}
	Let $X$ be a variety, we say that a formal sum $\mathbf{B}=\sum a_\nu \nu$, where the sum ranges over all divisorial valuations of $X$, is a $\mathbf{b}$-divisor, if the set
	$$F_X=\{\nu\ |\ a_\nu \neq 0\text{ and the center $\nu$ on $X$ is a divisor}\},$$
	is finite. For any birational map $Y\dashrightarrow X$, the trace $\mathbf{B}_Y$ of $\mathbf{B}$ is the sum $\sum a_\nu B_\nu$, where the sum now ranges over the elements of $F_Y$.
	\begin{defn}\label{b-divisors}
		Let $(X,\Delta)$ be a log pair. If $\pi:Y\rightarrow X$ is a projective birational morphism, then we may write
		$$K_Y+\Delta_Y\sim_{\mathbb{Q}}\pi^*(K_X+\Delta)$$
		Define a $\mathbf{b}$-divisor $\mathbf{L}_\Delta$ by setting $\mathbf{L}_{\Delta,Y}=\Delta_{Y,> 0}$.
		
	\end{defn}
	\subsection{Minimal models}
	Let $\phi:X\dashrightarrow Y$ be a projective birational map of normal quasi-projective varieties so that $\phi^{-1}$ contracts no divisors. If $D$ is a $\mathbb{Q}$-Cartier divisor on $X$ such that $D':=\phi_*D$ is $\mathbb{Q}$-Cartier then we say that $\phi$ is $D$-non-positive (resp. $D$-negative) if for a common resolution $p:W\rightarrow X$ and $q:W\rightarrow Y$, we have $p^*D=q^*D'+E$ where $E\geq 0$ and $p_*E$ is $\phi$-exceptional (resp. $\Supp(p_*E)=\mathrm{Exc}(\phi)$). Suppose that $f: X\rightarrow S$ and $f^m: X^m\rightarrow S$ are projective morphisms, $\phi: X\dashrightarrow X^m$ is a projective birational morphism over $S$ and $(X,\Delta)$ and $(X^m,\Delta^m)$ are lc pairs, klt pairs or dlt pairs where $\Delta^m=\phi_*\Delta$. If $a(E,X,\Delta)>a(E,X^m,\Delta^m)$ for all prime $\phi$-exceptional divisors $E\subset X$, $X^m$ is $\mathbb{Q}$-factorial and $K_{X^m}+\Delta^m$ is nef over $S$, then we say that $\phi:X\dashrightarrow X^m$ is a minimal model over $S$. If instead $a(E,X,\Delta)\geq a(E,X^m,\Delta^m)$ for all divisors $E$ and $K_{X^m}+\Delta^m$ is nef, then we call $X^m$ a weak log canonical model of $K_X+\Delta$. A weak log canonical model $\phi: X\dashrightarrow X^m$ is called a semi-ample model if $K_{X^m}+\Delta^m$ is semi-ample, and it is called a good minimal model if $\phi$ is also a minimal model. Notice that by the negativity lemma, all semi-ample models are crepant birationally equivalent to each other.

	\subsection{The canonical bundle formula}
	In this subsection, we give a version of the canonical bundle formula that follows from the work of Kawamata, Fujino--Mori, Ambro, and Koll\'{a}r (cf. \cite{Kaw98}, \cite{FM00}, \cite{Amb05} and \cite{Kol07}).
	\begin{defn}\label{lc-trivial fibration}
		Let $X, Z$ be normal varieties and $f: X\rightarrow Z$ a fibration. Let $\Delta$ be a $\mathbb{Q}$-divisor on $X$ such that $K_X+\Delta$ is $\mathbb{Q}$-Cartier. We call $(X,\Delta)\rightarrow Z$ an lc-trivial fibration if
		\begin{itemize}
			\item $(X_\eta,\Delta_{\eta})$ is sub-lc over the generic point of $Z$,
			\item $K_X+\Delta\sim_{\mathbb{Q},Z} 0$, and
			\item $h^0(X_g,\mathcal{O}_{X_g}( \lceil \Delta_{X_g,< 0}\rceil  ) )=1$, where $X_g$ is a general fiber of $f$.			
		\end{itemize}
	\end{defn}

	\begin{thm}[The canonical bundle formula]
		\label{canonical bundle formula}
		Let $X, Z$ be normal projective varieties and $\Delta$ a $\mathbb{Q}$-divisor on $X$ such that $K_X+\Delta$ is $\mathbb{Q}$-Cartier and $(X,\Delta)\rightarrow Z$ is an lc-trivial fibration.
		Suppose $B$ is a reduced divisor on $Z$ such that 
		if $D$ is prime divisor not contained in $B$, then
		\begin{itemize}
			\item no component of $\Delta$ dominates $D$, and
			\item $(X,\Delta+f^*D)$ is lc over the generic point of $D$.
		\end{itemize}
		Then one can write 
		$$K_X+\Delta\sim_{\mathbb{Q}} f^*(K_Z+B_Z+\M_Z),$$
		where $(Z, B_Z+\M_Z)$ is a generalized log pair satisfying the following:
		\begin{enumerate}
			\item[(a)] $B_Z$ is a $\mathbb{Q}$-divisor supported on $B$, called the boundary part.
			\item[(b)] Let $P\subset B$ be an irreducible divisor. Then
			$$\mathrm{coeff}_P(B_Z)=\mathrm{sup}_Q\{1-\frac{1+a(Q,X,\Delta)}{\mult_Qf^*P}\}$$
			where the supremum is taken over all divisors over $X$ that dominate $P$. Note that it is exactly the log canonical threshold of $f^*P$ with respect to $(X,\Delta)$ over the generic point of $P$.
			\item[(c)] $\M_Z=M(X/Z,\Delta)$ is the moduli part. It depends only on the crepant birationally equivalent class of $(F,\Delta|_F)$ and $Z$, where $F$ is the generic fiber of $f$. It is a $\mathbf{b}$-divisor and it is the push-forward of a nef $\mathbb{Q}$-divisor $\M_{Z'}=M(X'/{Z'},\Delta')$ of some projective birational morphism $Z'\rightarrow Z$, 
			where $X'\rightarrow X\times_Z Z'$ is birational onto the main component with $X'$ normal and projective. We say that the $\mathbf{b}$-divisor $\M$ descends on $Z'$.
			\item[(d)] 
			If $X\rightarrow Z$ and $\Delta,B$ satisfy the standard normal crossing assumptions, see Definition \ref{standard normal crossing}, then $\M$ descends on $Z$.
		\end{enumerate}
		Furthermore, by definition of $B_Z$, we have the following:
		\begin{enumerate}
			\item[(e)] If $P$ is dominated by a divisor $E$ such that $a(E,X,\Delta)<0$ (resp. $\leq 0$) then $\mathrm{coeff}_P(B_Z)>0$ (resp. $\geq 0$).
			
			\item[(f)] If $ \Delta$ is effective then so is $B_Z$.
			
			\item[(g)] $\mathrm{coeff}_P(B_Z)=1$ if and only if $P$ is dominated by a divisor $E$ such that $a(E,X,\Delta)=-1$.
		\end{enumerate}
	\end{thm}
	\begin{defn}[Standard normal crossing assumptions]\label{standard normal crossing}
		We say that $f: X\rightarrow Z$ and the $\mathbb{Q}$-divisors $\Delta, B$ satisfy the standard normal crossing assumptions if the following hold,
		\begin{itemize}
			\item $(X,\Delta+f^*B)$ and $(Z,B)$ are log smooth, and
			\item $(X,\Delta)$ is log smooth over $Z\setminus B$.
		\end{itemize}
	\end{defn}
	\begin{lem}\label{inverse of canonical bundle formula}
		Let $f:(X,\Delta)\rightarrow Z$ be an lc-trivial fibration. Suppose by the canonical bundle formula, 
		$$K_X+\Delta\sim_{\mathbb{Q}}f^*(K_Z+B_Z+\M_Z).$$ 
		Then
		\begin{enumerate}
			\item[(a)] every generalized log canonical center of $(Z,B_Z+\M_Z)$ is dominated by a log canonical center of $(X,\Delta)$, 
			\item[(b)] every $f$-vertical log center of $(X,\Delta)$ dominates a generalized log center of $(Z,B_Z+\M_Z)$,
			\item[(c)] $(X,\Delta)$ is sub-lc if and only if $(Z,B_Z+\M_Z)$ is generalized sub-lc, and
			\item[(d)] let $a\in (0,1)$ be a rational number. If $(Z,B_Z+\M_Z)$ is generalized sub-$a$-lc, then $a(E,X,\Delta)\geq -1+a$ for any prime divisor $E$ over $X$ whose center is vertical over $Z$. 
		\end{enumerate}
		\begin{proof}
			For any prime divisor $Q$ over $X$ whose image on $X$ does not dominate $Z$, by \cite[Lemma 2.45]{KM98}, there is a projective birational morphism $W\rightarrow Z$ such that the strict transform of $Q$ on the normalization of the main component of $X\times_Z W$ dominates a prime divisor on $W$. We call this prime divisor the ``divisorial image" of $Q$.
			
			Let $E$ be any prime divisor over $Z$. By \cite[Theorem 2.1, Proposition 4.4, and Remark 4.5]{AK00}, we have the following diagram
			$$\xymatrix{
				(X,\Delta)  \ar[d] _{f}&    (X',\Delta') \ar[l]_{\rho_X} \ar[d]^{f'}\\
				Z&    Z'\ar[l]_{\rho}	 
			} $$
			where
			\begin{itemize}
				\item $E$ is a prime divisor on $Z'$,
				\item $\rho$ and $\rho_X$ are birational, 
				\item $K_{X'}+\Delta'\sim_{\mathbb{Q}}\rho_X^*(K_X+\Delta)$,
				\item $Z'$ is smooth, $X'$ is normal projective with quotient singularities, and
				\item $f'$ is equidimensional.
			\end{itemize}
			
			(a). Suppose $E$ is a prime divisor over $Z$ such that $a(E,Z,B_Z+\M_Z)=-1$. 
			Write $K_{Z'}+B_{Z'}+\M_{Z'}\sim_{\mathbb{Q}}\rho^*(K_Z+B_Z+\M_Z)$, because $a(E,Z,B_Z+\M_Z)=-1$, then $\mathrm{coeff}_{E}(B_{Z'})=1$. By the definition of the boundary part, $$\mathrm{coeff}_E(B_{Z'})=\mathrm{sup}_P\{1-\frac{1+a(P,X',\Delta')}{\mult_Pf'^*E}\}$$
			then there is a prime divisor $Q$ on $X'$ dominating $E$ such that $a(Q,X',\Delta')=-1$. Also because $Q$ dominates $E$, $\rho_X(Q)$ is an lc center of $(X,\Delta)$ which dominates $\rho(E)$.

			(b). Suppose $Q$ is a prime divisor over $X$ such that $a(Q,X,\Delta)\in [-1,0)$ and $Q$ does not dominate $Z$. Let $E$ be the divisorial image of $Q$. By the diagram, it is easy to see that $Q$ is a prime divisor on $X'$.
			By assumption, $a(Q,X',\Delta')<0$, suppose $E$ is the prime divisor on $Z'$ dominated by $Q$, then 
			$$\mathrm{coeff}_E(B_{Z'})=\mathrm{sup}_P\{1-\frac{1+a(P,X',\Delta')}{\mult_Pf'^*E}\}\geq 1-\frac{1+a(Q,X',\Delta')}{\mult_Qf'^*E}>0,$$
			which means $E$ is a generalized log center of $(Z,B_Z+\M_Z)$.
			
			(c). Consider the diagram constructed above. By the definition of the boundary part
			$$\mathrm{coeff}_E(B_{Z'})=\mathrm{sup}_P\{1-\frac{1+a(P,X',\Delta')}{\mult_Pf'^*E}\},$$
			then $a(E,Z,B_Z+\M_Z)\geq -1$ if and only if $\mathrm{coeff}_E(B_{Z'})\leq -1$ if and only if $a(P,X',\Delta')=a(P,X,\Delta)\geq -1$ for every prime divisor $P$ over $X$ that dominates $E$. Because $E$ is arbitrary, then $(X,\Delta)$ is sub-lc implies $(Z,B_Z+\M_Z)$ is generalized sub-lc.
			
			Conversely, let $Q$ be any prime divisor over $X$. If the image of $Q$ on $X$ dominates $Z$, then $a(Q,X',\Delta')\geq -1$ because $(X_\eta,\Delta_\eta)$ is sub-lc according to the definition of lc-trivial fibration. If the image of $Q$ on $X$ does not dominate $Z$, let $E$ be the divisorial image of $Q$. Then for the same reason as in the proof of (b), $(Z, B_Z+\M_Z)$ is generalized sub-lc implies that $(X,\Delta)$ is sub-lc.
			
			(d). Let $Q$ be any prime divisor over $X$ whose image on $X$ does not dominates $Z$, $E$ be the divisorial image of $Q$. Because $(Z,B_Z+\M_Z)$ is generalized sub-$a$-lc, then
			$$1-a\geq \mathrm{coeff}_E(B_{Z'})=\mathrm{sup}_P\{1-\frac{1+a(P,X',\Delta')}{\mult_Pf'^*E}\}\geq 1-\frac{1+a(Q,X,\Delta)}{\mult_Qf'^*E}.$$
			Because $Z'$ is smooth, $E$ is a Cartier divisor, then $\mult_Qf'^*E \geq 1$, $a(Q,X,\Delta) \geq -1+a$.
		\end{proof}
	\end{lem}

	\section{Polarized log Calabi--Yau pairs}
	In this section, we recall some definitions and results in \cite{Kol23}, \cite{Bir22} and \cite{Bir23}. First we begin with definitions and results regarding stable families from Koll\'ar \cite{Kol23}.
	
	Let $f:X\rightarrow S$ be a flat morphism of schemes with $S_2$ fibers of pure dimension. A closed subscheme $D\subset X$ is a relative Mumford divisor over $S$ if there is an open subset $U\subset X$ such that
	\begin{itemize}
		\item codimension of $X_s\setminus U_s$ is $\geq 2$ for every $s\in S$,
		\item $D|_U$ is a Cartier divisor,
		\item $\Supp(D|_U)$ does not contain any irreducible component of any fiber $U_s$,
		\item $D$ is the closure of $D|_U$, and
		\item $X\rightarrow S$ is smooth at the generic points of $X_s\cap D$ for every $s\in S$.
	\end{itemize}

	A locally stable family $f:(X,\Delta)\rightarrow S$ is given by a morphism $f:X\rightarrow S$ of schemes and closed subschemes $D_i\subset X$ for $i=1,\cdots , m$ where 
	\begin{itemize}
		\item $f$ is flat and projective with reduced geometrically connected $S_2$ fibers of pure dimension whose codimension one singularities are nodal,
		\item $D_i$ are relative Mumford divisors over $S$,
		\item $\Delta=\sum a_iD_i$, 
		\item $K_X+\Delta$ is $\mathbb{Q}$-Cartier, and
		\item for each $s\in S$, $(X_s,\Delta_s)$ is an slc pair over the residue field $k(s)$ where $\Delta_s=\sum a_iD_{i,s}$.
	\end{itemize}
	A stable family $f:(X,\Delta)\rightarrow S$ is a locally stable family such that 
	\begin{itemize}
		\item for each $s\in S$, $K_{X_s}+\Delta_s$ is ample.
	\end{itemize}
	
	Next we recall some definition regarding the moduli space of polarized log Calabi--Yau pairs from \cite{Bir23}.
	
	A polarized log Calabi--Yau pair consists of a connected projective slc Calabi--Yau pair $(X,\Delta)$ and an ample divisor $N\geq 0$ such that $(X,\Delta+uN)$ is slc for any sufficiently small positive rational number $u>0$. Notice that when $(X,\Delta)$ is klt such $u$ exists naturally. We refer to such a slc pair by saying $(X,\Delta), N$ is a polarized log Calabi--Yau pair. 
	Fix a natural number $d$ and positive rational numbers $c,v,$. If in addition $\mathrm{dim}X=d,\Delta=cD$ for some divisor $D$ and $\mathrm{vol}(N)=v$, we call $(X,\Delta),N$ a $(d,c,v)$-polarized log Calabi--Yau pair. For the definition of slc pairs, see \cite[Definition-Lemma 5.10]{Kol13}.
	
	The following is Definition 1.9 in the first arxiv version of \cite{Bir23}.
	\begin{defn}
		Let $S$ be a reduced scheme. A $(d,c,v)$-polarized log Calabi--Yau family over $S$ consists of a projective morphism $f:X\rightarrow S$ of schemes, a $\mathbb{Q}$-divisor $\Delta$ and a divisor $N$ on $X$ such that 
		\begin{itemize}
			\item $(X,\Delta+uN)\rightarrow S$ is a stable family for some rational number $u>0$ with fibers of pure dimension $d$,
			\item $\Delta=cD$ where $D\geq 0$ is a relative Mumford divisor,
			\item $N\geq 0$ is a relative Mumford divisor,
			\item $K_{X/S}+\Delta\sim_{\mathbb{Q}}0/S$, and
			\item for any fiber $X_s$ of $f$, $\mathrm{vol}(N|_{X_s})=v$.
		\end{itemize}
	\end{defn}

	The following is Lemma 7.7 in the first arxiv version of \cite{Bir23}. 
	\begin{lem}\label{Bir23 Lemma 7.7}
		Let $d$ be a natural number and $c,v$ be positive rational numbers. Then there exists a positive rational number $t$ and a natural number $r$ such that $rc,rt\in \mathbb{N}$ satisfying the following. Assume $(X,\Delta),N$ is a $(d,c,v)$-polarized log Calabi--Yau pair. Then
		\begin{itemize}
			\item $(X,\Delta+tN)$ is slc,
			\item $\Delta+tN$ uniquely determines $\Delta,N$, and
			\item $r(K_X+\Delta+tN)$ is very ample with $$h^j(mr(K_X+\Delta+tN))=0$$
			for $m,j>0$.
		\end{itemize}
	\end{lem}
	
	The following is 7.6 in the first arxiv version of \cite{Bir23}. 
	\begin{defn}
		Let $d,c,v,t,r$ be as in Lemma \ref{Bir23 Lemma 7.7}, $n$ a natural number. To simplify the notation, define $\Xi=(d,c,v,t,r,\mathbb{P}^n)$. Let $S$ be a reduced scheme. A strongly embedded $\Xi$-polarized log Calabi--Yau family over $S$ is a $(d,c,v)$-polarized log Calabi--Yau family $f:(X,\Delta),N\rightarrow S$ together with a closed embedding $g:X\rightarrow \mathbb{P}^n_S$ such that
		\begin{itemize}
			\item $(X,\Delta+tN)\rightarrow S$ is a stable family,
			\item $f=\pi \circ g$ where $\pi$ denotes the projection $\mathbb{P}^n _S\rightarrow S$,
			\item letting $\mathcal{L}:=g^*\mathcal{O}_{\mathbb{P}^n_S}(1)$, we have $R^qf_*\mathcal{L}\cong R^q\pi_* \mathcal{O}_{\mathbb{P}^n_S}(1)$ for all $q$, and
			\item for every $s\in S$, we have
			$$\mathcal{L}_s\cong \mathcal{O}_{X_s}(r(K_{X_s}+\Delta_s+tN_s)).$$
		\end{itemize}
		Denote the functor $\mathcal{E}^s\mathcal{PCY}_{\Xi}$ on the category of reduced schemes by setting
		$$\mathcal{E}^s\mathcal{PCY}_{\Xi}(S)=\{\text{strongly embedded }\Xi\text{-polarized log Calabi--Yau families over }S\}.$$
	\end{defn}
	
	The following is Proposition 7.8 in the first arxiv version of \cite{Bir23}. Also see \cite{Bir22} and \cite{Kol23}.
	\begin{thm}[{\cite[Proposition 7.8]{Bir23}}]
		The functor $\mathcal{E}^s\mathcal{PCY}_{\Xi}$ has a fine moduli space $\Cs$, which is a reduced separated scheme of finite type, and a universal family $(\Cx\subset \mathbb{P}^n_{\Cs},\Cd),\Cn\rightarrow \Cs$
	\end{thm}

	\begin{rem}\label{Parametrizing space for log pairs}
		Fix a positive integer $d$ and two positive rational numbers $c,v$. Let $f:X\rightarrow Z$ be a fibration, $\Delta$ a $\mathbb{Q}$-divisor and $N$ a divisor on $X$. Suppose
		\begin{itemize}
			\item a general fiber $(X_g,\Delta_g)$ is a $d$-dimensional klt pair,
			\item $\mathrm{coeff}\Delta_g\subset c\mathbb{N}$,
			\item $K_{X_g}+\Delta_g\sim_{\mathbb{Q},Z}0$, and
			\item $N_g$ is ample and $\vol(N_g)=v$.
		\end{itemize}
		It is easy to see that there is a dense open subset $U\subset Z$ such that $(X_U,\Delta_U),N_U\rightarrow U$ is a $(d,c,v)$-polarized log Calabi--Yau family over $U$. By Lemma \ref{Bir23 Lemma 7.7}, there exist a positive rational number $t$ and a natural number $r$ such that $(X_u,\Delta_u+tN_u)$ is slc and $r(K_{X_u}+\Delta_u+tN_u)$ is very ample without higher cohomology for every closed point $u\in U$. Then by cohomology and base change, $r(K_{X_U}+\Delta_U+tN_U)$ is relatively very ample, and after replacing $U$ by a dense open subset, it defines a closed embedding $g: X_U\hookrightarrow \mathbb{P}^n_U$, where $n$ is a natural number depending only on $d,c,v,t,r$. 
		
		It is easy to see that $(X_U,\Delta_U+tN_U)\rightarrow U$ is a stable family, then $f_U:(X_U\subset \mathbb{P}^n_U,\Delta_U),N_U\rightarrow U$ together with $g: X_U\hookrightarrow \mathbb{P}^n_U$ is a strongly embedded $(d,c,v,t,r,\mathbb{P}^n)$-polarized log Calabi--Yau family over $U$. Since $\Ce^s\mathcal{PCY}_{\Theta}$ has a fine moduli space with the universal family $(\Cx\subset \mathbb{P}^n_{\Cs},\Cd),\Cn\rightarrow \Cs$, we have $(X_U,\Delta_U)\cong (\Cx,\Cd)\times_\Cs U$, where $U\rightarrow \Cs$ is the moduli map defined by $f|_{X_U}$.
	\end{rem}

	\section{Lc-trivial fibrations}
	
	\begin{lem}
		\label{general fiber calabi-yau will induce lc-trivial fibration structure}
		
		Let $(X,\Delta)$ be a projective lc pair, $f:X\rightarrow Z$ a fibration to a projective normal $\mathbb{Q}$-factorial variety $Z$ such that $K_{X_g}+\Delta_g\sim _{\mathbb{Q}}0$, where $X_g$ is a general fiber of $f$. Let $g:(X',\Delta')\rightarrow (X,\Delta)$ be a crepant birational morphism and $D$ be a reduced divisor on $Z$ such that the morphism $h:=f\circ g:(X',\Delta')\rightarrow Z$ is log smooth over $Z\setminus D$.
		
		Then there is a $\mathbb{Q}$-divisor $\overline{\Delta'}$ on $X'$, such that 
		\begin{itemize}
			\item $\overline{\Delta'}_g=\Delta'_g$,
			\item $(X',\overline{\Delta}')$ is log smooth over $Z\setminus D$, and
			\item $(X',\overline{\Delta}')\rightarrow Z$ is an lc-trivial fibration.
		\end{itemize}
		\begin{proof}
			Let $\eta $ denote the generic point of $Z$. Since $(X_g,\Delta_g)$ is a log Calabi--Yau variety, then $K_{X'_\eta}+\Delta_\eta' \sim_{\mathbb{Q}} 0$, which means that there exists a vertical $\mathbb{Q}$-divisor $B'$ such that $K_{X'}+\Delta'+B'\sim_{\mathbb{Q},Z} 0$.

			Write $B'=R+G$, such that each irreducible component of $\Supp(R)$ is not contained in $h^{-1}(\mathrm{Supp}(D))$ and each irreducible component of $\Supp(G)$ is contained in $h^{-1}(\mathrm{Supp}(D))$. Suppose $R=\sum a_iR_i$, where $R_i$ are distinct prime divisors. Because each $R_i$ is vertical, $Z$ is $\mathbb{Q}$-factorial and $h$ is smooth over $Z\setminus \mathrm{Supp}(D)$, hence smooth at the generic point of $R_i$, then each $R_i$ dominates a prime $\mathbb{Q}$-Cartier divisor $R_{Z,i}$ on $Z$. Let $R_Z:=\sum a_i R_{Z,i}$, then there exists a $\mathbb{Q}$-divisor $F_R$ supported on $h^{-1}(\mathrm{Supp}(D))$, such that $R+F_R=h^*R_Z$ and we have $K_{X'}+\Delta'+B'-(R+F_R)\sim_{\mathbb{Q},Z}K_{X'}+\Delta'+G-F_R\sim_{\mathbb{Q},Z}0$. 
			
			Let $\overline{\Delta}':=\Delta'+G-F_R$, then $K_{X'}+ \overline{\Delta}'\sim_{\mathbb{Q},Z} 0$ and $\overline{\Delta}'_g= \Delta'_g$. Write $\Delta'_g=\Delta'_{g,> 0}-\Delta'_{g,< 0}$. Since $\Delta'_{g,< 0}$ is $g$-exceptional, it is easy to see that $(X',\overline{\Delta}')\rightarrow Z$ is an lc-trivial fibration. Because $(X',\Delta')$ is log smooth over $Z\setminus D$, $\mathrm{Supp}(F_R)\subset h^{-1}(D)$ and $\mathrm{Supp}(G)\subset h^{-1}(D)$, then $(X',\overline{\Delta}')$ is log smooth over $Z\setminus D$. 
		\end{proof}
	\end{lem}
	Lemma \ref{general fiber calabi-yau will induce lc-trivial fibration structure} implies that for an lc pair $(X,\Delta)$ and a morphism $f:X\rightarrow Z$, if a general fiber $(X_g,\Delta_g)$ is a log Calabi--Yau pair, then $(X,\Delta)\rightarrow Z$ induces an lc-trivial fibration, hence it defines a moduli $\mathbf{b}$-divisor $\M$. Furthermore, if there exists a reduced divisor $D\subset Z$ such that $(X,\Delta)\rightarrow Z$ has a fiberwise log resolution over $Z\setminus D$, then it defines an lc-trivial fibration $(X',\Delta')\rightarrow Z$ such that $(X',\Delta')$ is log smooth over $Z\setminus D$. Therefore if $(Z,D)$ is log smooth, then $X'\rightarrow Z$ and $\Delta',D$ satisfy the standard normal crossing assumptions; hence by Theorem \ref{canonical bundle formula}, the moduli $\mathbf{b}$-divisor $\M$ descends on $Z$.
	
	\begin{prop}[{\cite[Proposition 3.1]{Amb05}}]
		\label{moduli part stable under base change}
		Suppose $f:(X,\Delta)\rightarrow Z$ is an lc-trivial fibration. Let $\rho: Z'\rightarrow Z$ be a surjective morphism from a proper normal variety $Z'$ and $f':(X',\Delta')\rightarrow Z'$ the lc-trivial fibration induced by the normalization of the main component of the base change.
		$$\xymatrix{
			(X,\Delta)  \ar[d] _{f}&    (X',\Delta') \ar[l]_{\rho_X} \ar[d]^{f'}\\
			Z&    Z'\ar[l]_{\rho}	. 
		}$$
		Let $\M$ and $\M'$ be the corresponding moduli $\mathbf{b}$-divisors. If $\M$ descends on $Z$, then $\M'$ descends on $Z'$ and
		$$\rho^*\M_Z=\M'_{Z'}.$$
	\end{prop}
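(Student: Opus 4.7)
The plan is to derive the identity by running the canonical bundle formula on both $Z$ and $Z'$ and then carefully isolating the moduli contribution. First, I would pull back the formula $K_X+\Delta\sim_{\mathbb{Q}} f^*(K_Z+B_Z+\M_Z)$ via $\rho_X$, using the base change square, to obtain
$$\rho_X^*(K_X+\Delta)\sim_{\mathbb{Q}} f'^{*}\rho^*(K_Z+B_Z+\M_Z).$$
Since $(X',\Delta')$ arises from the normalization of the main component of $X\times_Z Z'$ with $\Delta'$ defined via the ramification formula for $\rho_X$, I would write $K_{X'}+\Delta'=\rho_X^*(K_X+\Delta)+R'$ for an explicit vertical divisor $R'$ absorbing the ramification of $\rho$ along the fibres; because $R'$ is pulled back from $Z'$, one has $R'=f'^{*}R_{Z'}$ for some $\mathbb{Q}$-divisor $R_{Z'}$ supported on the ramification locus of $\rho$.

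Next I would apply the canonical bundle formula to $f':(X',\Delta')\rightarrow Z'$, giving $K_{X'}+\Delta'\sim_{\mathbb{Q}} f'^{*}(K_{Z'}+B'_{Z'}+\M'_{Z'})$. Comparing with the pulled-back formula from Step 1 via the contraction $f'$, I get
$$K_{Z'}+B'_{Z'}+\M'_{Z'}\sim_{\mathbb{Q}} \rho^*(K_Z+B_Z+\M_Z)+R_{Z'}.$$
The strategy is then to show separately that $K_{Z'}+B'_{Z'}=\rho^*(K_Z+B_Z)+R_{Z'}$, which forces $\M'_{Z'}\sim_{\mathbb{Q}} \rho^*\M_Z$.

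The identification of the boundary part is the heart of the argument. I would proceed divisor-by-divisor: for each prime $P'\subset Z'$, use the supremum formula
$$\coeff_{P'}(B'_{Z'})=\sup_{Q}\left\{1-\frac{1+a(Q,X',\Delta')}{\mult_{Q}f'^{*}P'}\right\}$$
and compare it with the analogous formula for $\coeff_{\rho(P')}(B_Z)$. Under the standard normal crossing assumption on both $f$ and $f'$, the suprema are attained along strict transforms of divisors in the fibre product, and the multiplicities $\mult_{Q}f'^{*}P'$ can be computed in terms of $\mult_{\rho_X(Q)}f^{*}\rho(P')$ and the ramification multiplicities of $\rho$ along $P'$. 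The Riemann--Hurwitz contribution from $\rho$ matches exactly the $R_{Z'}$ term, yielding the claimed identity of boundary parts.

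The main obstacle is precisely this divisor-by-divisor matching, especially for divisors $P'\subset Z'$ that are $\rho$-exceptional or whose image lies in the snc branch locus of $f$ in a non-transverse way; here one must carefully combine the explicit discrepancy formula with the standard normal crossing hypothesis to ensure both sides give the same coefficient. Once that matching is done, subtraction produces $\M'_{Z'}=\rho^*\M_Z$ as $\mathbb{Q}$-divisors (not merely $\mathbb{Q}$-linear equivalence classes), since under standard normal crossing both sides descend to honest $\mathbb{Q}$-divisors on $Z'$.
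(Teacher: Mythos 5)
The paper does not prove this proposition itself; it is cited verbatim from \cite[Proposition~3.1]{Amb05}, so there is no internal proof to compare against. Your sketch follows the same overall strategy that Ambro uses: compare the canonical bundle formulas on $Z$ and $Z'$, prove $K_{Z'}+B'_{Z'}=\rho^*(K_Z+B_Z)$ divisor by divisor via the supremum/discrepancy formula, and conclude $\M'_{Z'}=\rho^*\M_Z$ by subtraction. That is the right skeleton.

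However, there is a real point of imprecision in Step~1 that you should resolve before the rest can be trusted. You write $K_{X'}+\Delta'=\rho_X^*(K_X+\Delta)+R'$ and treat $R'$ as a possibly nonzero, vertical divisor ``absorbing the ramification of $\rho$.'' In the definition of the lc-trivial fibration induced by base change (Ambro's, and the one this paper uses), $\Delta'$ is the \emph{crepant} pullback: one declares $K_{X'}+\Delta'=\rho_X^*(K_X+\Delta)$, so $R'=0$ and the ramification of $\rho_X$ is folded into $\Delta'$, not into a separate term. If you instead take $\Delta'=\rho_X^*\Delta$ (so $R'$ equals the ramification divisor of $\rho_X$), two things go wrong. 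First, you are then computing boundary and moduli parts for a pair that is, a priori, different from the one in the statement. Second, and more seriously, the claim that $R'$ is pulled back from $Z'$ is not automatic: if $f^*P=\sum a_i E_i$ with unequal multiplicities $a_i$ and $\rho$ ramifies with index $m$ over $P$, the normalization of $X\times_Z Z'$ along the $E_i$ acquires ramification indices $m/\gcd(m,a_i)$, which differ across $i$; so $R_{X'}$ need not be of the form $f'^*R_{Z'}$, and $(X',\rho_X^*\Delta)\to Z'$ need not even be lc-trivial. Taking $R'=0$ removes this issue entirely, and the comparison equation becomes simply $K_{Z'}+B'_{Z'}+\M'_{Z'}\sim_{\mathbb{Q}}\rho^*(K_Z+B_Z+\M_Z)$.

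Beyond that, the plan is sound but the load-bearing computation is not carried out. You need the explicit coefficient identity
$$\coeff_{P'}(B'_{Z'})=1-m_{P'}\bigl(1-\coeff_{\rho(P')}(B_Z)\bigr),$$
where $m_{P'}$ is the ramification index of $\rho$ along $P'$ (interpreted as $0$ when $\rho(P')$ is not a divisor); combined with Hurwitz $K_{Z'}=\rho^*K_Z+\sum(m_{P'}-1)P'$ this yields $K_{Z'}+B'_{Z'}=\rho^*(K_Z+B_Z)$ as an equality of $\mathbb{Q}$-divisors. Proving this identity from the supremum formula under the standard normal crossing hypothesis --- including the $\rho$-exceptional case and the case where $f^*P$ has several components of different multiplicities --- is precisely the content of Ambro's argument; in your write-up it is named as ``the main obstacle'' but left open. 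Finally, to pass from $\sim_{\mathbb{Q}}$ to honest equality of moduli divisors you should make explicit that the moduli part is defined by subtraction against a fixed $\mathbb{Q}$-divisor $L$ with $K_X+\Delta\sim_{\mathbb{Q}}f^*L$, chosen compatibly on $Z'$ as $\rho^*L$; once $K_{Z'}+B'_{Z'}=\rho^*(K_Z+B_Z)$ is an equality of divisors, $\M'_{Z'}=\rho^*\M_Z$ follows as an equality rather than just an equivalence.
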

	
	\begin{thm}[{\cite[Theorem 6.5]{Jia21}}]\label{geneic point maps to log smooth locus means moduli divisor of restriction is restriction of moduli divisor}
		Let $\Ct$ be a projective variety, $(\Cy_\Ct,\Cr_\Ct)\rightarrow \Ct$ an lc-trivial fibration such that the corresponding moduli divisor $\Mm_{\Ct}$ descends on $\Ct$. Suppose there exists an open subset $\Ct^o\hookrightarrow \Ct$ such that $(\Cy_\Ct,\Cr_\Ct)$ is log smooth over $\Ct^o$. Let $\phi:Z\rightarrow \Ct$ be a morphism such that $\phi$ maps the generic point of $Z$ into $\Ct^o$. If $(X,\Delta)\rightarrow Z$ is an lc-trivial fibration whose generic fiber is crepant birationally equivalent to generic fiber of $(\Cy_\Ct,\Cr_\Ct)\times_{\Ct}Z\rightarrow Z$. 
		Let $\M_Z$ be the moduli $\mathbf{b}$-divisor of $f$. If $\M_Z$ descends on $Z$, then we have
		$$\M_Z=\phi^*\Mm_{\Ct}.$$
	\end{thm}
	
	\begin{thm}[{\cite[Theorem 3.3]{Amb05}} and {\cite[Theorem 6.5]{Jia21}}]
		\label{moduli part is nef and good}
		Let $f:(X,\Delta)\rightarrow S$ be an lc-trivial fibration such that $\Delta_g$ is effective, then there exists a commutative diagram
		$$\xymatrix{
			(X,\Delta)  \ar[d] _{f}& &   (X_T,\Delta_T) \ar[d]_{f_T} &\\
			S 	\ar@/_1pc/@{-->}[rrr] _\Phi &  \bar{S}\ar[l]_{\tau} \ar[r]^{\rho}& 	 T \ar@/_1pc/@{-->}[ll]_i  \ar[r]^{\pi} & V
		} $$
		such that
		\begin{itemize}
			\item $f_T:(X_T,\Delta_T)\rightarrow T$ is an lc-trivial fibration,
			\item $\tau$ and $\pi$ are generically finite and surjective morphisms, $\rho$ is surjective,
			\item there exists a nonempty open subset $U\subset \bar{S}$ and an isomorphism
			$$\xymatrix{
				(X,\Delta)\times_S U  \ar[rr] _{\cong}\ar[dr]& &   (X_T,\Delta_T)\times_{T} U \ar[dl]\\
				&  U,& 	 
			} $$
			\item let $\M$, $\N$ be the corresponding moduli-$\bm{b}$-divisors of $f$ and $f_T$, then $\N$ is $\bm{b}$-nef and big,
			\item if $\M$ descends on $S$ and $\N$ descends on $T$, then $\tau^*\M_S=\rho^*\N_{T}$,
			\item $\Phi:S\dashrightarrow V$ is the period map defined in \cite[Section 2]{Amb05}, and 
			\item $i:T\dashrightarrow S$ is a rational map such that the generic fiber of $f_T$ is equal to the pullback of $f$ via $i$.
		\end{itemize}
		
	\end{thm}

	\section{Weak boundedness}
	The definition of weak boundedness is first introduced in \cite{KL10}. We find this definition useful in proving the birational boundedness of fibrations with bounded general fibers.
	\begin{defn}
		A $(g,m)$-curve is a smooth curve $C^o$ whose smooth compactification $C$ has genus $g$ and such that $C\setminus C^o$ consists of $m$ closed points.
	\end{defn}
	\begin{defn}\label{definition of weakly bounded}
		Let $W$ be a proper scheme with a line bundle $\mathcal{N}$ and let $U$ be an open subset of a proper variety. We say a morphism $\xi:U\rightarrow W$ is weakly bounded with respect to $\mathcal{N}$ if there exists a function $b_{\mathcal{N}}:\mathbb{Z}^2_{\geq 0}\rightarrow \mathbb{Z}$ such that for every pair $(g,m)$ of non-negative integers, for every $(g,m)$-curve $C^o\subseteq C$, and for every morphism $C^o\rightarrow U$, one has that $\mathrm{deg}\xi_C ^*\mathcal{N}\leq b_{\mathcal{N}}(g,m)$, where $\xi_C:C\rightarrow W$ is the induced morphism. The function $b_{\mathcal{N}}$ is called a weak bound, and we say that $\xi$ is weakly bounded by $b_\mathcal{N}$.
		
		We say an open subset $U$ of a proper variety is weakly bounded if there exists a compactification $i: U\hookrightarrow W$, such that $i: U\hookrightarrow W$ is weakly bounded with respect to an ample line bundle $\mathcal{N}$ on $W$. 
	\end{defn}
	\begin{lem}[{\cite[Lemma 5.3]{Jia21}}] \label{decompose to get weakly bounded} 
		Let $T$ be a quasi-projective variety. Then we can decompose $T$ into finitely many locally closed subsets $\cup_{i\in I} T_i$, such that each $T_i$ is weakly bounded.
		
	\end{lem}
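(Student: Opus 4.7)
The plan is to argue by induction on $d := \dim T$. The base case $d = 0$ is immediate: $T$ is a finite disjoint union of closed points, any morphism from a curve into a point is constant, so the pullback of any ample line bundle on the trivial compactification has degree zero and the constant function $b \equiv 0$ is a weak bound.

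For $d \geq 1$, I would first pass to a convenient compactification. By Nagata's theorem and Hironaka's resolution of singularities, choose a smooth projective variety $\overline{T}$ with an open immersion $T \hookrightarrow \overline{T}$ such that $B := \overline{T} \setminus T$ is the support of a simple normal crossing divisor. Fix a very ample divisor $H$ on $\overline{T}$, and by Bertini replace $H$ by a general member of its linear system so that $H + B$ has simple normal crossing support. By a standard positivity argument (e.g.\ Matsusaka's theorem), for $m \gg 0$ the divisor $K_{\overline{T}} + mH$ is ample; setting $A := mH$, the class $K_{\overline{T}} + A + B$ is big as it is the sum of the ample class $K_{\overline{T}} + A$ and the effective divisor $B$. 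Stratify
$$T \;=\; T_1 \,\sqcup\, T_2, \qquad T_1 := \overline{T} \setminus \Supp(A + B) \;=\; T \setminus \Supp(A), \qquad T_2 := T \cap \Supp(A).$$

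For $T_1$, the pair $(\overline{T}, A + B)$ is log smooth projective with $K_{\overline{T}} + A + B$ big, so the weak boundedness theorem for log-general-type pairs from \cite{KL10} applies, showing that $T_1$ is weakly bounded with respect to $\mathcal{O}_{\overline{T}}(H)$. For $T_2$, since $\Supp(A)$ is a Cartier divisor in $\overline{T}$, the set $T_2$ is a quasi-projective variety of dimension at most $d - 1$, so by the inductive hypothesis it admits a finite decomposition into locally closed weakly bounded pieces. Assembling these with $T_1$ gives the desired stratification of $T$.

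The main obstacle is the input statement that a log smooth projective pair $(W, D)$ with $K_W + D$ big has weakly bounded complement $W \setminus \Supp(D)$. The underlying principle is that for a morphism $\xi_C : C \to W$ extending a $(g, m)$-curve morphism $C^o \to W \setminus \Supp(D)$, one has $\xi_C^{-1}(\Supp(D)) \subseteq C \setminus C^o$, a set of at most $m$ points; cutting $W$ with an auxiliary very ample pencil reduces the problem to the one-dimensional case, where a logarithmic Riemann--Hurwitz estimate together with bigness of $K_W + D$ yields a uniform bound on $\deg \xi_C^* H$ in terms of $(g, m)$ alone. This is exactly the technical content one invokes from \cite{KL10}, and once it is in hand the induction above is routine.
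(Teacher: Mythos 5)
The paper does not reprove this lemma; it cites it to \cite[Lemma~5.3]{Jia21}, so there is no in-paper proof to compare against. Evaluating your argument on its own merits, the overall strategy (adjoin an ample divisor to the boundary so that the pair becomes of log general type, appeal to weak boundedness of log-general-type complements, and induct on dimension over the discarded locus) is natural and may indeed resemble what [Jia21] does, but the key technical step is both misattributed and, as you state it, false.

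The first concrete problem is the choice $A:=mH$. Since $mH$ is a multiple of a single hypersurface, $\Supp(A+B)=H+B$, and the pair you are implicitly feeding into the criterion has \emph{non-reduced} boundary $mH+B$. The statement you formulate, ``$(W,D)$ log smooth with $K_W+D$ big implies $W\setminus\Supp(D)$ weakly bounded,'' fails already for $W=\mathbb P^1$ and $D=3\cdot p$: here $K_W+D$ has degree $1$, but $W\setminus\Supp(D)\cong\mathbb A^1$ is not weakly bounded, as $t\mapsto t^n$ gives $(0,1)$-curve maps of arbitrary degree. The fix is to take $A$ to be a general member of $|mH|$ (or a sum of $m$ distinct general members of $|H|$), so that $A$ is reduced, $K_{\overline T}+A+B$ is big, and $T\cap A$ still has strictly smaller dimension. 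But even in the reduced setting the criterion as stated is false: take $W=\mathrm{Bl}_p\mathbb P^2$ with exceptional curve $E$, and $D$ the total transform of five general lines in $\mathbb P^2$ avoiding $p$. Then $(W,D)$ is log smooth, $D$ is reduced, $K_W+D=2\pi^*\mathcal O_{\mathbb P^2}(1)+E$ is big, yet $E\cong\mathbb P^1$ lies in $W\setminus D$, and $(0,0)$-curve maps $\mathbb P^1\to E$ of arbitrary degree show $W\setminus D$ is not weakly bounded. To exclude such examples you need at least that the boundary carries an ample piece (which your $A$ does provide), but then you are using a genuinely nontrivial algebraic hyperbolicity theorem, not a formal consequence of bigness.

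Second, the attribution to \cite{KL10} is incorrect. Kov\'acs--Lieblich prove weak boundedness of the moduli stack of canonically polarized manifolds by a Viehweg--Zuo type positivity argument on direct image sheaves; it is a theorem about a specific moduli problem, not a general log-general-type criterion, and \cite[Proposition~2.14]{KL10}, the only statement from that paper invoked in the present paper, concerns the \emph{consequence} of weak boundedness (finite-type parametrization), not a criterion for it. Finally, your proof sketch for the key input does not reduce to dimension one: logarithmic Riemann--Hurwitz does give the needed degree bound for curves mapping to \emph{curves}, but ``cutting $W$ with a very ample pencil'' does not turn a curve map $C\to W$ into a curve map to a lower-dimensional slice, since a generic curve in $W$ is transverse to, rather than contained in, any member of the pencil. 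As written, the hardest step of the argument is neither correctly cited nor correctly proved, so there is a genuine gap.
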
 
	The following theorem says that morphisms from bounded varieties to weakly bounded varieties can be parametrized by a scheme of finite type. 
	\begin{thm}[{\cite[Proposition 2.14]{KL10}}]\label{weakly bounded morphisms are bounded}
		Let $T$ be a quasi-compact separated reduced $\mathbb{C}$-scheme and $\mathscr{U}\rightarrow T$ a smooth morphism. Given a projective $T$-variety and a polarization over $T$, $(\mathscr{M},\mathcal{O}_{\mathscr{M}}(1))$, an open subvariety $\mathscr{M}^o\hookrightarrow \mathscr{M}$ over $T$, and a weak bound $b$, there exists a $T$-scheme of finite type $\mathscr{W}^b_{\mathscr{M}^o}$ and a morphism $\Theta:\mathscr{W}^b_{\mathscr{M}^o}\times \mathscr{U}\rightarrow \mathscr{M}^o$ such that for every closed point $t\in T$ and for every morphism $\xi:\mathscr{U}_t\rightarrow \mathscr{M}^o_t\subset \mathscr{M}_t$ that is weakly bounded by $b$ there exists a point $p\in \mathscr{W}^b_{\mathscr{M}^o_t}$ such that $\xi=\Theta|_{\{p\}\times \mathscr{U}_t}$.
	\end{thm}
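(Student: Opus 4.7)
The plan is to reduce the statement to the finite-type property of a suitable Hom scheme (or, equivalently, of the Hilbert scheme of graphs), using the weak bound to bound the Hilbert polynomial of the graph. Fix a relative compactification $\bar{\mathscr{U}}\to T$ of $\mathscr{U}$ together with a relatively ample line bundle $L$ on $\bar{\mathscr{U}}$ (after stratifying $T$ if necessary). For each geometric point $t\in T$ and each $\xi:\mathscr{U}_t\to\mathscr{M}^o_t\subset \mathscr{M}_t$ weakly bounded by $b$, let $\Gamma_\xi\subset \bar{\mathscr{U}}_t\times \mathscr{M}_t$ be the closure of the graph; identifying $\xi$ with $\Gamma_\xi$ reduces the problem to showing that the collection of such graphs lies in a bounded family of closed subschemes of $\bar{\mathscr{U}}\times_T\mathscr{M}$.

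The central step is to bound the Hilbert polynomial of $\Gamma_\xi$ with respect to $H:=p_1^*L\otimes p_2^*\mathcal{O}_{\mathscr{M}}(1)$. Its coefficients are expressible in terms of the intersection numbers $(\xi^*\mathcal{O}_{\mathscr{M}}(1))^{a}\cdot L^{d-a}$ on a resolution of the indeterminacy of $\xi$, where $d=\dim\mathscr{U}_t$. The numbers $L^d$ depend only on $\bar{\mathscr{U}}_t$, so the real input is controlling $\xi^*\mathcal{O}_{\mathscr{M}}(1)\cdot L^{d-1}$ (and then the higher powers). This is where the weak bound enters: taking $d-1$ general members of $|kL|$ for suitably large $k$, one gets a complete intersection curve $C\subset \bar{\mathscr{U}}_t$ whose genus and number of punctures $m=|C\cap(\bar{\mathscr{U}}_t\setminus\mathscr{U}_t)|$ are bounded in terms of $(\bar{\mathscr{U}}_t,L,\bar{\mathscr{U}}_t\setminus \mathscr{U}_t)$ alone. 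Then the pair $(C^o:=C\cap\mathscr{U}_t,C)$ is a $(g,m)$-curve for bounded $(g,m)$, and the weak bound gives $\xi^*\mathcal{O}_{\mathscr{M}}(1)\cdot L^{d-1}=\deg \xi_C^*\mathcal{O}_{\mathscr{M}}(1)\le b(g,m)$.

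With this one bounded intersection number in hand, the remaining intersection numbers $(\xi^*\mathcal{O}_{\mathscr{M}}(1))^{a}\cdot L^{d-a}$ for $a\ge 2$ are bounded inductively, either by intersecting iteratively with general members of $|L|$ and applying weak boundedness on lower-dimensional complete intersections, or via Hodge-type inequalities on a resolution where $\xi^*\mathcal{O}_{\mathscr{M}}(1)$ is nef. Once all the coefficients of the Hilbert polynomial of $\Gamma_\xi$ are uniformly bounded, only finitely many Hilbert polynomials occur, so the relevant part of $\mathrm{Hilb}(\bar{\mathscr{U}}\times_T\mathscr{M}/T)$ is of finite type over $T$. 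The open locus parametrizing subschemes that are graphs of morphisms $\mathscr{U}_s\to \mathscr{M}^o_s$ yields the desired $\mathscr{W}^b_{\mathscr{M}^o}$, and the universal closed subscheme supplies the morphism $\Theta:\mathscr{W}^b_{\mathscr{M}^o}\times_T\mathscr{U}\to \mathscr{M}^o$.

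The main obstacle I anticipate is the uniform control of the higher intersection numbers $(\xi^*\mathcal{O}_{\mathscr{M}}(1))^{a}\cdot L^{d-a}$ for $a\ge 2$ together with the simultaneous relative handling over $T$: one needs to resolve the indeterminacy of $\xi$ family-wise and to choose the general complete intersections in a way that works uniformly (or at least stratum-wise). This is essentially resolved by passing to a stratification of $T$ so that $\bar{\mathscr{U}}\to T$, $L$, and the boundary $\bar{\mathscr{U}}\setminus\mathscr{U}$ behave fiber-wise uniformly, and by applying a Bertini-type argument relatively to produce the required $(g,m)$-curves of uniformly bounded $(g,m)$.
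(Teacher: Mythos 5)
This statement is not proved in the paper; it is quoted verbatim from \cite[Proposition 2.14]{KL10}, so there is no internal argument to compare against. Judged on its own merits and against the Kov\'acs--Lieblich original, your sketch captures the right strategy: identify a morphism $\xi:\mathscr{U}_t\to\mathscr{M}^o_t$ with the closure of its graph $\Gamma_\xi\subset\bar{\mathscr{U}}_t\times\mathscr{M}_t$, bound the Hilbert polynomial of $\Gamma_\xi$ with respect to $p_1^*L\otimes p_2^*\mathcal{O}_{\mathscr{M}}(1)$ by feeding general complete-intersection $(g,m)$-curves into the weak bound, and conclude via finiteness properties of the Hilbert (or Chow) scheme. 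This is in substance the mechanism of \cite[Proposition 2.14]{KL10}, so you are not taking a genuinely different route.

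Two points deserve tightening. First, the alternative you offer for controlling the higher intersection numbers --- ``intersecting iteratively with general members of $|L|$ and applying weak boundedness on lower-dimensional complete intersections'' --- does not stand on its own: the weak bound only speaks about degrees of pullbacks along $(g,m)$-curves, not along higher-dimensional complete intersections, so there is nothing to apply on a surface or threefold slice. The Hodge/Khovanskii--Teissier route is the one that actually closes the argument. Concretely, after resolving the indeterminacy of the extension of $\xi$ to $\bar{\mathscr{U}}_t$ by $\tilde{p}_1:\tilde{\mathscr{U}}_t\to\bar{\mathscr{U}}_t$, both $D:=\tilde{\xi}^*\mathcal{O}_{\mathscr{M}}(1)$ and $H:=\tilde{p}_1^*L$ are nef with $H^d=L^d>0$, and log-concavity of $s_a:=D^a\cdot H^{d-a}$ gives $s_a\le s_1^a/s_0^{a-1}$; the weak bound controls $s_1$ because a general $L$-complete-intersection curve in $\bar{\mathscr{U}}_t$ avoids the indeterminacy locus (which lies in $\bar{\mathscr{U}}_t\setminus\mathscr{U}_t$), has bounded genus by adjunction, and meets the boundary in a bounded number of points. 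Second, when passing from bounded intersection numbers to a finite-type Hilbert scheme you should note that $\Gamma_\xi$ is integral, so bounding its degree genuinely cuts it down to finitely many Hilbert polynomials (or, if you prefer, work with the Chow variety and then dominate the Hilbert scheme locus of graphs). With these two clarifications the argument is sound and, as far as the strategy goes, agrees with the source the paper cites.
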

	
	In particular, suppose $U$ is an open subset of a proper variety and it is weakly bounded with respect to the compactification $U\hookrightarrow W$, the ample line bundle $\mathcal{N}$ on $W$ and the function $b$. We define $\mathscr{M}:=W\times T$ and $\mathscr{M}^o:=U\times T$. By definition, every morphism $\mathscr{U}_t\rightarrow U$ is equal to the morphism $\xi:\mathscr{U}_t\rightarrow \mathscr{M}^o_t\subset \mathscr{M}_t$ which is weakly bounded by $b$, then by Theorem \ref{weakly bounded morphisms are bounded}, $\xi=\Theta|_{\{p\}\times \mathscr{U}_t}$ for a closed point $p\in \mathscr{W}^b_{\mathscr{M}^o_t}$.

	\section{Birational boundedness of fibrations}
	
	The purpose of this section is to prove Theorem \ref{bounded base implies bounded fibration}.
	
	\begin{defn}\label{definition of bounded base}
		Let $d$ be a positive integer, $\Ci\subset [0,1]\cap\mathbb{Q}$ a DCC set and $v,V$ two positive rational numbers. Let $\Cp(d,\Ci,v,V)$ be the set of log pairs $(X,\Delta)$ satisfying the following
		\begin{itemize}
			\item $(X,\Delta)$ is a $d$-dimensional projective klt pair,
			\item $\mathrm{coeff}(\Delta)\subset \Ci$,
			\item there is a fibration $f:X\rightarrow Z$ such that $K_X+\Delta\sim_{\mathbb{Q},Z}0$, by the canonical bundle formula, we write $K_X+\Delta\sim_{\mathbb{Q}}f^*(K_Z+B_Z+\M_Z)$,
			\item $K_Z+B_Z+\M_Z$ is nef,
			\item there is a divisor $A$ on $X$ such that $A_g:=A|_{X_g}$ is ample and $\vol(A_g)=v$, where $X_g$ is a general fiber of $f$, and
			\item there is a divisor $H$ on $Z$ such that $\vol(H+K_Z+B_Z+\M_Z)\leq V$ and $|H|$ defines a birational map.
		\end{itemize}
	\end{defn}
	
	\begin{thm}\label{bounded base implies bounded fibration}
		Let $d$ be a positive integer, $\Ci\subset [0,1]\cap\mathbb{Q}$ a DCC set and $v,V$ two positive rational numbers. Then 
		$$\Cp(d,\Ci,v,V)$$
		is birationally bounded. 
		
		Furthermore, there is a log smooth morphism $(\Cy,\Cq)\rightarrow S$ with reduced boundary depending only on $d,\Ci,v,V$, where $S$ is of finite type, such that for any $(X,\Delta)\in \Cp(d,\Ci,v,V)$, there is a closed point $s\in S$ and a birational map $\Cy_s\dashrightarrow X$ such that $\Cq_s\geq \mathbf{L}_{\Delta,\Cy_s}$.
	\end{thm}
	
	Recall that $\mathbf{L}_{\Delta}$ is the $\mathbf{b}$-divisor defined in Definition \ref{b-divisors}.
	
	The proof of Theorem \ref{bounded base implies bounded fibration} is pretty long, we divide it into several middle results. First we construct a family with nice properties which parametrizes a general fiber of $f:(X,\Delta)\rightarrow Z$.
	
	\begin{thm}\label{essential diagram}
		Let $d$ be a natural number and $c,v$ positive rational numbers. Let $t,r$ be the numbers defined in Lemma \ref{Bir23 Lemma 7.7}. Then there is a commutative diagram 
		$$\xymatrix{
			(\Cx_{\Cs^o},\Cd_{\Cs^o})  \ar[d] _{\Cf}& (\Cx_{\bar{\Cs^o}},\Cd_{\bar{\Cs^o}})\ar[l]_{\tau_\Cx} \ar[r]^{\rho_{\Cx}} \ar[d] &   (\Cx_{{\Ct^o}},\Cd_{{\Ct^o}}) \ar[d]_{\Cf_{{\Ct^o}}} &\\
			\Cs^o 	\ar@/_1pc/[rrr] _\Phi &  \bar{\Cs^o}\ar[l]_{\tau} \ar[r]^{\rho}& 	 {\Ct^o}   \ar[r]^{\pi} &\Cv^o,
		}$$
		where
		\begin{itemize}
			\item $(\Cx_{\Cs^o},\Cd_{\Cs^o})\rightarrow {\Cs^o},(\Cx_{\Ct^o} ,\Cd_{\Ct^o})\rightarrow {\Ct^o}$ and $(\Cx_{\bar{\Cs^o}},\Cd_{\bar{\Cs^o}})\rightarrow \bar{\Cs}^o$ are locally stable families of log Calabi--Yau pairs over quasi-projective varieties,
			\item $(\Cx_{\Cs^o},\Cd_{\Cs^o})\rightarrow {\Cs^o}$ and $(\Cx_{\Ct^o} ,\Cd_{\Ct^o})\rightarrow {\Ct^o}$ have fiberwise log resolutions,
			\item $\Phi,\rho$ are surjective, $\tau$ is finite, $\pi$ is \'etale,
			\item $(\Cx_{\bar{{\Cs^o}}},\Cd_{\bar{{\Cs^o}}})\cong (\Cx_{\Cs^o},\Cd_{\Cs^o})\times_{\Cs^o} \bar{{\Cs^o}}\cong (\Cx_{\Ct^o},\Cd_{\Ct^o})\times_{\Ct^o} \bar{{\Cs^o}}$,
			\item there is a family of effective relative ample over ${\Cs^o}$ divisors $\{\Gamma_{{\Cs^o},\alpha},\alpha\in \mathscr{C}\}$ on $\Cx_{\Cs^o}$ such that $\tau_{\Cx}^* \Gamma_{{\Cs^o},\alpha} =\rho_\Cx^* \Gamma_{{\Ct^o},\alpha}$ for effective relative ample over ${\Ct^o}$ divisors $\{\Gamma_{{\Ct^o},\alpha},\alpha\in \mathscr{C}\}$ on $\Cx_{\Ct^o}$, where $\mathscr{C}$ is a variety with positive dimension,
			\item $\bigcap_{\alpha\in \mathscr{C}} \Gamma_{{\Cs^o},\alpha}=\bigcap_{\alpha\in \mathscr{C}} \Gamma_{{\Ct^o},\alpha}=\emptyset$, 
			\item $\Cv^o$ is weakly bounded with respect to an effective ample divisor $\Ca$ on a projective compactification $\Cv^o\hookrightarrow\Cv$ of $\Cv^o$,
			\item $\Ct^o$ has a projective compactification $\Ct^o\hookrightarrow \Ct$,
			\item $\pi$ extends to a generically finite morphism $\pi:\Ct\rightarrow \Cv$, 
			\item the moduli $\mathbf{b}$-divisor $\Nn$ of $\mathcal{F}_{\Ct^o}$ descends on $\Ct$,
			\item $l\Nn_{\Ct}$ is Cartier for a natural number $l$ depending only on $d,c,v$, and
			\item we can choose a section of $l\Nn_{\Ct}$ so that $l\Nn_{\Ct}\geq \pi^* \Ca$.
		\end{itemize}
		And if $f:X_U\rightarrow U$ is a fibration, $\Delta_U$ is a $\mathbb{Q}$-divisor and $N_U$ is a divisor on $X_U$, such that the general fiber $X_g$ is klt and $(X_g,\Delta_g),N_g$ forms a $(d,c,v)$-polarized log Calabi--Yau pair, then after replacing $U$ by an open subset, there is a morphism $\phi_U:U\rightarrow \Cs^o$ such that 
		$$(X_U,\Delta_U)\cong (\Cx_{\Cs^o},\Cd_{\Cs^o})\times _{\Cs^o} U.$$
		Suppose $U\hookrightarrow Z$ is a compactification of $U$ such that the moduli $\mathbf{b}$-divisor $\M$ descends on $Z$, $\Phi\circ \phi_U$ extends to a morphism $\phi_{\Cv}:Z\rightarrow \Cv$, then $l\M_Z$ is Cartier and we can choose a section of $l\M_Z$ such that $l\M_Z\geq \phi_{\Cv}^*\Ca$.
		
		\begin{proof}
			Let $n$ be the natural number defined in Lemma \ref{Parametrizing space for log pairs} and $(\Cx\subset \mathbb{P}^n,\Cd),\Cn\rightarrow \Cs$ the universal family of the functor $\mathcal{E}^s\mathcal{PCY}_{\Xi}$, where $\Xi:=(d,c,v,t,r,\mathbb{P}^n)$. Because $(\Cx,\Cd)\rightarrow \Cs$ is a family of log Calabi--Yau pairs, by Theorem \ref{moduli part is nef and good}, we have the following diagram
			$$\xymatrix{
				(\Cx,\Cd)  \ar[d] _{\Cf}& &   (\Cx_{\Ct},\Cd_{\Ct}) \ar[d]_{\Cf_{\Ct}} &\\
				\Cs 	\ar@/_1pc/@{-->}[rrr] _\Phi &  \bar{\Cs}\ar[l]_{\tau} \ar[r]^{\rho}& 	 \Ct   \ar[r]^{\pi} &\Cv.
			}$$
			Let $\bar{U}\subset \Cs$ be an open subset such that $(\Cx,\Cd)\times_{\Cs} \bar{U}\cong (\Cx_{\Ct},\Cd_{\Ct})\times _{\Ct} \bar{U}$.
			
			Let $\bar{\Cs}_p$ be an irreducible component of $\bar{\Cs}$, let $\bar{\Ct}$ be normalization of the Stein factorization of $\rho:\bar{\Cs}_p\rightarrow \Ct$ so that $\bar{\Ct}\rightarrow \Ct$ is finite and $\bar{\Ct}$ is normal, and let $\tilde{\Cs}$ be the normalization of the main component of $\bar{\Cs}\times _\Ct \bar{\Ct}$. Repeating this argument for $\tilde{\Cs}\rightarrow \bar{\Ct}$ finitely many times, we may assume the general fiber of $\tilde{\Cs}\rightarrow \bar{\Ct}$ is irreducible. Then we replace $\Ct$ by $\bar{\Ct}$, $(\Cx_\Ct,\Cd_\Ct)$ by $(\Cx_\Ct,\Cd_\Ct)\times_{\Ct}\bar{\Ct}$, $\bar{\Cs}$ by $\tilde{\Cs}$, $\rho:\bar{\Cs}\rightarrow \Ct$ by $\tilde{\Cs}\rightarrow \bar{\Ct}$, $\tau:\bar{\Cs}\rightarrow \Cs$ by the composition of $\tilde{\Cs}\rightarrow \bar{\Cs}$ with $\tau:\bar{\Cs}\rightarrow \Cs$, $\pi:\Ct\rightarrow \Cv$ by the composition of $\bar{\Ct}\rightarrow \Ct$ with $\pi:\Ct\rightarrow \Cv$, and we assume that
			\begin{itemize}
				\item[(1)] the general fiber of $\bar{\Cs}\rightarrow \Ct$ is irreducible.
			\end{itemize}
			
			Let $\bar{\Cs}^o$ be an open subset of $\bar{U}$ and $\Cs^o\subset \Cs$ be an open subset such that 
			\begin{itemize}
				\item[(2)] $\tau$ is \'etale on $\bar{\Cs^o}$ and $\tau(\bar{\Cs}^o)=\Cs^o$,
				\item[(3)] $(\Cx,\Cd)\times_{\Cs} \bar{\Cs^o}\cong (\Cx_{\Ct},\Cd_{\Ct})\times _{\Ct} \bar{\Cs^o}$,
				\item[(4)] $\Phi$ is a smooth morphism on $\Cs^o$, and
				\item[(5)] $(\Cx,\Cd)\rightarrow \Cs$ has a fiberwise log resolution over $\Cs^o$.
			\end{itemize}
			By Lemma \ref{decompose to get weakly bounded}, we can choose an open subset $\Cv^o\subset \Cv$ and let $\Ct^o$ be the preimage of $\Cv^o$, such that
			\begin{itemize}
				\item[(6)] $\Cv^o$ is weakly bounded,
				\item[(7)] $\Ct^o= \pi^{-1}(\Cv^o)$,
				\item[(8)] $\Cv^o$ and $\Ct^o$ are smooth,
				\item[(9)] $\pi$ is an \'etale morphism on $\Ct^o$, and
				\item[(10)] $(\Cx_{\Ct},\Cd_{\Ct})\rightarrow \Ct$ has a fiberwise log resolution over $\Ct^o$.
			\end{itemize}
			Next we replace $\Cs^o$ by $\Phi^{-1}\Cv^o$ and $\bar{\Cs}^o$ by the $\rho^{-1}\Ct^o$, then
			\begin{itemize}
				\item[(11)] $\Phi^{-1}(\Cv^o)=\Cs^o$ and $\rho^{-1}(\Ct^o)=\bar{\Cs}^o$.
			\end{itemize}
			
			Notice that $(\Cx,\Cd)\times _\Cs (\Cs\setminus \Cs^o)\rightarrow (\Cs\setminus \Cs^o)$ is a again family of log Calabi--Yau pairs, by Theorem \ref{moduli part is nef and good} again we have a diagram, then we repeat the same argument and get a diagram for an open susbet of $\Cs\setminus \Cs^o$. Thus we have a stratification of $\Cs$. We replace $\Cs^o$ by this stratification and morphisms $\tau,\rho,\Phi,\pi$ by $\tau|_{\bar{\Cs}^o},\rho|_{\bar{\Cs}^o},\Phi|_{\Cs^o},\pi|_{\Ct^o}$, then we have the following diagram
			$$\xymatrix{
				(\Cx_{\Cs^o},\Cd_{\Cs^o})  \ar[d] _{\Cf^o}& &   (\Cx_{\Ct^o},\Cd_{\Ct^o}) \ar[d]_{\Cf_{\Ct^o}} &\\
				\Cs^o 	\ar@/_1pc/[rrr] _\Phi &  \bar{\Cs}^o \ar[l]_{\tau} \ar[r]^{\rho}& 	 \Ct^o   \ar[r]^{\pi} &\Cv^o,
			}$$
			which satisfies conditions (1)-(11).

			Define $\Cx_{{\bar{\Cs}^o}}:= \Cx_{\Cs^o}\times_{{\Cs^o}}{\bar{\Cs}^o}=\Cx_{\Ct^o}\times_{\Ct^o}\bar{\Cs}^o$. Let $\tau_\Cx,\rho_\Cx$ denote the natural projections $\Cx_{{\bar{\Cs}^o}}\rightarrow \Cx_{{\Cs^o}}$ and $\Cx_{{\bar{\Cs}^o}}\rightarrow \Cx_{\Ct^o}$. 
			
			$$\xymatrix{
				\Cx_{\Cs^o}  &  \Cx_{\bar{\Cs}^o} \ar[l]_{\tau_\Cx} \ar[r]^{\rho_\Cx} & \Cx_{\Ct^o}
			}$$

			Let $\{ \tilde{\Gamma}_{\Ct^o,\alpha},\alpha\in \mathscr{C}\}$ be a family of general sufficiently ample divisor on $\Cx_{\Ct^o}$ such that $\bigcap_{\alpha\in \mathscr{C}} \mathrm{Supp}(\tilde{\Gamma}_{\Ct^o,\alpha})=\emptyset$, where $\mathscr{C}$ is a variety of positive dimension. Define $\Gamma_{{\Cs^o},\alpha}:=(\tau_\Cx)_*\rho_\Cx^*\tilde{\Gamma}_{\Ct^o,\alpha}$, we claim the following: 
			\begin{claim}\label{claim 6.4} The divisor $\Gamma_{{\bar{\Cs}^o},\alpha}:=\tau_\Cx^*\Gamma_{{\Cs^o},\alpha}$ is equal to the pullback of a divisor $\Gamma_{\Ct^o,\alpha}$ on $\Cx_{\Ct^o}$ and $\bigcap_{\alpha\in \mathscr{C}} \mathrm{Supp}(\Gamma_{\Ct^o,\alpha})=\emptyset$.\end{claim}

			Recall by the construction, $\Cv^o$ is weakly bounded with respect to an ample divisor $\Ca$ on a projective compactification $\Cv$ of $\Cv^o$. We replace $\Ct$ by a smooth compactification of $\Ct^o$ such that $\Ct\setminus \Ct^o$ is an snc divisor and $\pi$ extends to a morphism $\Ct\rightarrow \Cv$. Then by the standard normal crossing assumptions, the moduli $\mathbf{b}$-divisor $\Nn$ of $(\Cx_{\Ct^o},\Cd_{\Ct^o})\rightarrow \Ct^o$ descends on $\Ct$. By Theorem \ref{moduli part is nef and good}, $\Nn_{\Ct}$ is nef and big. We fix a member in $|\Nn_{\Cs}|_{\mathbb{Q}}$ such that 
			\begin{itemize}
				\item $\Ct\setminus \Supp(\Nn_{\Ct})\subset \Ct^o$, and
				\item $\Supp(\Nn_{\Cs})\supset \Supp (\pi^*\Ca)$.
			\end{itemize}
			Then we fix an integer $l\gg 0$ such that $l\Nn_{\Ct}$ is Cartier and $l\Nn_{\Ct}-\pi^*\mathcal{A}\geq 0$.
			
			If $f:X_U\rightarrow U$ is a fibration, $\Delta_U$ is a $\mathbb{Q}$-divisor and $N_U$ is a divisor on $X_U$, such that the general fiber $X_g$ is klt and $(X_g,\Delta_g),N_g$ forms a $(d,c,v)$-polarized log Calabi--Yau pair, then by Remark \ref{Parametrizing space for log pairs}, after replacing $U$ by an open subset, $r(K_U+\Delta_U+tN_U)$ is relatively very ample and defines a closed embedding $g:X_U\hookrightarrow \mathbb{P}^n_U$. Thus, $f_U:(X_U,\Delta_U),N_U\rightarrow U$ together with $g$ is a strongly embedded $(d,c,v,t,r,\mathbb{P}^n)$-polarized log Calabi--Yau family over $U$.
			
			Because $(\Cx,\Cd)\rightarrow \Cs$ is the universal family, then there exists a morphism $\phi:U\rightarrow \Cs$ such that $(X,\Delta)\cong (\Cx,\Cd)\times_\Cs U$. Because $\Cs^o$ is a stratification of $\Cs$, then after replacing $U$ by an open subset, we may assume $\phi$ maps $U$ to $\Cs^o$, and we have $(X,\Delta)\cong (\Cx_{\Cs^o},\Cd_{\Cs^o})\times_{\Cs^o} U$.
			
			Let $Z$ be a smooth projective compactification of $U$ such that the moduli $\mathbf{b}$-divisor $\M$ of $(X_U,\Delta_U)\rightarrow U$ descends on $Z$ and $\Phi\circ \phi_U$ extends to a morphism $\phi_{\Cv}:Z\rightarrow \Cv$. Define $\bar{U}:=U\times _{\Cs^o}\bar{\Cs}^o$, because $\tau$ is a finite morphism, then $\bar{U}$ has a smooth compactification $\bar{Z}$ such that the finite cover $\bar{U}\rightarrow U$ extends to a generically finite cover $\tau_X:\bar{Z}\rightarrow Z$ and the composition of morphisms $\bar{U}\rightarrow \bar{\Cs}^o\rightarrow \Ct^o$ extends to a morphism $\phi_{\Ct}:\bar{Z}\rightarrow \Ct$.
			
			Because $(\Cx,\Cd)\times _\Cs \bar{\Cs}^o\cong (\Cx_{\Ct},\Cd_{\Ct})\times_\Ct \bar{\Cs}^o$, then the generic fiber of $(X_{\bar{U}},\Delta_{\bar{U}}):=(X_U,\Delta_U)\times _U \bar{U}\rightarrow \bar{U}$ is equal to the generic fiber of the pullback of $(X_{\Ct^o},\Cd_{\Ct^o})\rightarrow \Ct^o$ via $\phi_{\Ct}$. Since $(X_{\Ct^o},\Cd_{\Ct^o})\rightarrow \Ct^o$ has a fiberwise log resolution and $\phi_{\Ct}$ maps the generic point of $\bar{Z}$ into the generic point of $\Ct^o$, then by Theorem \ref{geneic point maps to log smooth locus means moduli divisor of restriction is restriction of moduli divisor}, we have $\bar{\M}_{\bar{Z}}=\phi_{\Ct}^*\Nn_{\Ct}$, where $\bar{\M}$ is the moduli $\mathbf{b}$-divisor of $(X_{\bar{U}},\Delta_{\bar{U}})\rightarrow \bar{U}$. By Proposition \ref{moduli part stable under base change}, we have $\bar{\M}_{\bar{Z}}=\tau_X^*\M_Z$. Recall that by the construction, we have $l\Nn_{\Ct}-\pi^*\Ca\geq 0$, then $l\bar{\M}_{\bar{Z}}-\tau_X^*\phi_{\Cv}^*\Ca\geq 0$ and $l\M_Z-\phi_{\Cv}^*\Ca\geq 0$.
		\end{proof}
	\end{thm}
	
	\begin{proof}[Proof of Claim \ref{claim 6.4}]
		
		Let $v\in \Cv^o$ be a closed point and $t\in \pi^{-1}v$ a closed point, $\Cx_t$ be the fiber of $\Cx_{\Ct}\rightarrow\Ct$ over $t$, define $\Cs_{t}:=\Phi^{-1}v$, $\bar{\Cs}_t:=\tau^{-1}\Cs_t$, $\Cx_{\Cs_t}\rightarrow \Cs_t$ and $\Cx_{\bar{\Cs}_t}\rightarrow \bar{\Cs}_t$ be the corresponding fiber product. Because $\Phi$ is the period map, by \cite[Proposition 2.1]{Amb05}, $\Cx_{\Cs_t }\rightarrow \Cs_t $ is an isotrivial fibration whose general fiber is isomorphic to $\Cx_t$. Also since $\Cx_{\bar{\Cs}}\cong \Cx_{\Ct}\times_{\Ct}\bar{\Cs}$, we have $\Cx_{\bar{\Cs}_t}=\Cx_t\times \bar{\Cs}_t$. 
		
		By the structure of the isotrivial fibration, there is a Galois cover $\tau_{t^\#}:\Cs^\#_t \rightarrow \Cs_t$ with Galois group $G$ which also acts on $\Cx_t$, such that $\Cx_{\Cs^\#_t}:=\Cx_{\Cs_t}\times _{\Cs_t}\Cs^\#_t \cong \Cx_t\times \Cs^\#_t$ and $\Cx_{\Cs_t}\cong \Cx_t\times \Cs^\#_t/G$, where $G$ acts diagonally on $\Cx_t\times \Cs^\#_t$. After replacing $\Cs_t^\#$ with a higher Galois cover, we may assume that there is a finite cover $\pi_{t^\#}:\Cs^\#_t \rightarrow \bar{\Cs}_t$ and $\tau_{t^\#}=\tau_t \circ \pi_{t^\#}:\bar{\Cs}_t\rightarrow \Cs_t$, where $\tau_t:=\tau|_{\bar{\Cs}_t}$.
		
		$$\xymatrix@R=1em{
			& &&    \Cx_{\Cs^\#_t} \ar[dd]  \ar[dl]^{\pi_{\Cx^\#_t}} \ar[dlll]_{\tau_{\Cx^\#_t}}  \ar[dr]^{\rho_{\Cx^\#_t}} &\\
			\Cx_{\Cs_t}\ar[dd]  &&     \Cx_{\bar{\Cs}_t} \ar[ll]^{\tau_{\Cx_{t}}} \ar[rr] _{\rho_{\Cx_t}} \ar[dd]    &    &  \Cx_t \ar[dd] \\
			&&&  \Cs^\#_t \ar[dl]^{\pi_{t^\#}} \ar[dlll]_{\tau_{t^\#}}  \ar[dr]^{\rho_{t^\#}}&\\
			\Cs_t	&&\bar{\Cs}_t \ar[ll]^{\tau_t} \ar[rr]_-{\rho_t}  & &   t\cong \mathrm{Spec}\ \mathbb{C}
		}$$
		
		Denote the projections $\Cx_{\Cs^\#_t}\rightarrow \Cx_{\Cs_t}$ and $\Cx_{\Cs^\#_t}\rightarrow \Cx_{\bar{\Cs}_t}$ by $\tau_{\Cx^\#_t}$ and $\pi_{\Cx^\#_t}$. Define a divisor $H_{t,\alpha}:= \tilde{\Gamma}_{\Ct^o,\alpha}|_{\Cx_t}$ on $\Cx_t$ and $H_{\#,\alpha}:=\sum_{g\in G} g^*\rho_{\Cx^\#_t}^*H_{t,\alpha}$ on $\Cx_{\Cs^\#_t}$, then $H_{\#,\alpha}$ is $G$-invariant. Because $G$ acts on $\Cx_t\times \Cs^\#_t$ diagonally and $\rho_{\Cx^\#_t}^*H_{t,\alpha}$ is vertical over $\Cx_t$, then $H_{\#,\alpha}$ is vertical over $\Cx_t$. 
		
		Because $\Cx_{\Cs_t}$ is the quotient of $ \Cx_{\Cs^\#_t}$ by $G$, there is a divisor $H_\alpha$ on $\Cx_{\Cs_t}$ such that $\tau_{\Cx^\#_t}^* H_\alpha=H_{\#,\alpha}$. 
		Also because $\tau_{\Cx^\#_t}= \tau_{\Cx_t}\circ \pi_{\Cx^\#_t}$, then $\tau_{\Cx_t}^* H_\alpha\leq (\pi_{\Cx^\#_t})_*H_{\#,\alpha}$. Since $H_{\#,\alpha}$ is vertical over $\Cx_t$, $(\pi_{\Cx^\#_t})_*H_{\#,\alpha}$ is also vertical over $\Cx_t$ via $\rho_{\Cx_t}$, and it means that $\tau_{\Cx_t}^* H_\alpha$ is also vertical over $\Cx_t$. It is easy to see that $\bigcap_{\alpha\in \mathscr{C}}H_\alpha=\emptyset$.
		
		Because $\rho_{\Cx^\#_t}^* H_{t,\alpha}\leq H_{\#,\alpha}=\tau_{\Cx^\#_t}^* H_\alpha$ and $\pi_{\Cx^\#_t}$ is a finite cover, then we have 
		$$\Supp(\tau_{\Cx_t}^*(\tau_{\Cx_t})_* \rho_{\Cx_t}^* H_{t,\alpha})\subset \Supp( \tau_{\Cx_t}^*H_\alpha).$$ 
		$\tau_{\Cx_t}^*H_\alpha$ is vertical over $\Cx_t$ implies that $ \tau_{\Cx_t}^*(\tau_{\Cx_t})_*\rho_{\Cx_t}^* H_{t,\alpha}$ is vertical over $\Cx_t$. 
		Because $\bigcap_{\alpha\in \mathscr{C}}H_\alpha=\emptyset$, $\bigcap_{\alpha\in \mathscr{C}}((\tau_{\Cx})_*\rho^*_{\bar{\Cx}}\tilde{\Gamma}_{\Ct,\alpha})|_{\Cx_{\Cs_t}}=\emptyset$. Because $t$ can be any closed point, then $\tau_{\Cx}^*(\tau_{\Cx})_*\rho^*_{\bar{\Cx}}\tilde{\Gamma}_{\Ct,\alpha}$ is vertical over $\Cx_{\Ct}$ and 
		$\bigcap_{\alpha\in \mathscr{C}}(\tau_{\Cx})_*\rho^*_{\bar{\Cx}}\tilde{\Gamma}_{\Ct,\alpha}=\emptyset$.
		
		Recall that the general fiber of $\rho:\bar{\Cs^o}\rightarrow \Ct^o$ is irreducible, then there exists an open subset $V\subset \Ct^o$ such that every fiber of $\rho_{\Cx}:\Cx_{\bar{\Cs}^o}\rightarrow \Cx_{\Ct^o}$ over $\mathcal{F}_{\Ct^o}^{-1}V$ is irreducible. Because $\tilde{\Gamma}_{\Ct,\alpha}$ is ample on $\Cx_{\Ct^o}$, then $\tilde{\Gamma}_{\Ct,\alpha}|_{\Cx_t}\neq 0$ for any closed point $t\in \Ct^o$. Also since $\tau_{\Cx}^*(\tau_{\Cx})_*\rho^*_{\bar{\Cx}}\tilde{\Gamma}_{\Ct,\alpha}$ is vertical over $\Cx_{\Ct}$, the generic point of its image on $\Cx_{\Ct}$ is contained in $\mathcal{F}_{\Ct^o}^{-1}V$, then there is a divisor $\Gamma_{\Ct^o,\alpha}$ on $\Cx_{\Ct^o}$ such that $\rho_\Cx^*\Gamma_{\Ct^o,\alpha}=\tau_{\Cx}^*(\tau_{\Cx})_*\rho^*_{\bar{\Cx}}\tilde{\Gamma}_{\Ct,\alpha}$. Since $\bigcap_{\alpha\in \mathscr{C}}(\tau_{\Cx})_*\rho^*_{\bar{\Cx}}\tilde{\Gamma}_{\Ct,\alpha}=\emptyset$, then we have $\bigcap_{\alpha\in \mathscr{C}}\Gamma_{\Ct^o,\alpha}=\emptyset$.
	\end{proof}

	The next theorem is about boundedness the base of a fibration, we show that for any log pair $(X,\Delta)\in \Cp(d,\Ci,v,V)$ toghther with a fibration $f:X\rightarrow Z$, $Z$ is birationally bounded.
	
	\begin{thm}\label{base is bounded}
		Let $d$ be a positive integer, $\Ci\subset [0,1]\cap\mathbb{Q}$ a DCC set and $v,V$ two positive rational numbers. There exists a positive rational number $c$ and a quasi-projective smooth morphism $\Cu \rightarrow T$, where $T$ is of finite type, such that:
		
		Let the following be the commutative diagram corresponding to the numbers $d,c,v$ defined in Theorem \ref{essential diagram} 
		$$\xymatrix{
			(\Cx_{\Cs^o},\Cd_{\Cs^o})  \ar[d] _{\Cf}& (\Cx_{\bar{\Cs^o}},\Cd_{\bar{\Cs^o}})\ar[l]_{\tau_\Cx} \ar[r]^{\rho_{\Cx}} \ar[d] &   (\Cx_{{\Ct^o}},\Cd_{{\Ct^o}}) \ar[d]_{\Cf_{{\Ct^o}}} &\\
			\Cs^o 	\ar@/_1pc/[rrr] _\Phi &  \bar{\Cs^o}\ar[l]_{\tau} \ar[r]^{\rho}& 	 {\Ct^o}   \ar[r]^{\pi} &\Cv^o
		}$$
		and $\Cv^o\hookrightarrow \Cv$ be the corresponding closure. Let $(X,\Delta)\in \Cp(d,\Ci,v,V)$ be a log pair together with the divisor $A$, the morphism $f:X\rightarrow Z$ and $K_X+\Delta\sim_{\mathbb{Q}}f^*(K_Z+B_Z+\M_Z)$. 
		Then a general fiber $(X_g,\Delta_g),A_g$ is a $(d,c,v)$-polarized log Calabi--Yau pair, and there exists an open subset $U\subset Z$, a morphism $\phi:U\rightarrow \Cs^o$ and a closed point $t\in T$ such that
		\begin{enumerate}
			\item $(X,\Delta)\times_Z U\cong (\Cx_{\Cs^o},\Cd_{\Cs^o})\times_{\Cs^o} U$,
			\item $Z\setminus \Supp(B_Z)$ has an open subset which is isomorphic to an big open subset of $\Cu_t$, and
			\item $\Phi\circ \phi:U\rightarrow \Cv^o$ extends to a morphism $\phi_{\Cv}:\Cu_t \rightarrow \Cv^o$.
		\end{enumerate}
		
	\begin{proof}
		Recall that a big open subset $U$ of a normal $V$ is an open subset and $\mathrm{codim}(V\setminus U)\geq 2$.
		
		Fix $(X,\Delta)\in \Cp(d,\Ci,v,V)$ toghther with a fibration $f:X\rightarrow Z$, $K_X+\Delta\sim_{\mathbb{Q}}f^*(K_Z+B_Z+\M_Z)$ and divisors $A$ and $H$.
		
		\textit{Step 1}. In this step we show that there exists a projective smooth morphism $\Cw\rightarrow T$, where $T$ of finite type, depending only on $d,\Ci,v,V$ such that $Z$ is birationally equivalent to $\Cw_t$ for a closed point $t\in T$.

		Because $|H|$ defines a birational map $h: Z\dashrightarrow W$, let $p: Z'\rightarrow Z,q: Z'\rightarrow W$ be a common resolution, then there is a very ample divisor $H_W$ on $W$ and an effective divisor $F$ on $Z'$ such that
		\begin{equation}\label{birational contraction}
			p^*H=q^*H_W+F.
		\end{equation}
		By the definition of $\Cp(d,\Ci,v,V)$, $K_Z+B_Z+\M_Z$ is nef, then $\vol(H_W)\leq \vol(H)\leq \vol(H+K_Z+B_Z+\M_Z)\leq V$. Then by the boundedness of the Chow varieties, $W$ is in a bounded family. We denote this bounded family by $\Cw\rightarrow T$ and the natural relative very ample divisor by $H$, suppose $W\cong \Cw_t$ and $H_W=H_{\Cw_t}$. 
		
		After replacing $\Cw$ by a log resolution of the generic fiber of $\Cw\rightarrow T$ and passing to a stratification of $T$, we may assume that $\Cw\rightarrow T$ is a smooth morphism. Because $H$ is big, we can replace $H_{\Cw}$ by a very ample divisor on the new family, $H$ by a higher multiple and $V$ by a larger constant so that equation \eqref{birational contraction} still holds.
		
		\vspace{0.4cm}
		
		\textit{Step 2}. In this step we show that a general fiber $(X_g,\Delta_g),A_g$ is a $(d,c,v)$-polarized log Calabi--Yau pair, then prove (1).
		
		Because the general number $(X_g,\Delta_g)$ is a log Calabi--Yau pair and $\mathrm{coeff}(\Delta_g)\subset \Ci$ is a DCC set, then by \cite[Theorem 1.5]{HMX14}, $\mathrm{coeff}(\Delta_g)$ is in a finite set. Choose a positive rational number $c$ such that $\mathrm{coeff}(\Delta)_g\subset c\mathbb{N}$. It is easy to see that a general fiber $(X_g,\Delta_g), A_g$ of $f$ is a $(d,c,v)$-polarized log Calabi--Yau pair. By Theorem \ref{essential diagram} and the construction of the morphism $(\Cx_{\Cs^o},\Cd_{\Cs^o})\rightarrow \Cs^o$, there exists an open subset $U\subset Z$ such that $(X,\Delta)\times_Z U$ is equal to the pullback of $(\Cx_{\Cs^o},\Cd_{\Cs^o})\rightarrow \Cs^o$ via the moduli map $\phi:U\rightarrow \Cs^o$. We define $\phi_\Cv:=\Phi\circ \phi:U\rightarrow \Cv^o$, note that because $U$ is birationally equivalent to $\Cw_t$, $\phi_\Cv$ also defines a map $\Cw_t\dashrightarrow \Cv$.

		\vspace{0.4cm}
		
		\textit{Step 3}. In this step we construct a birational morphism $g_c:Z_c\rightarrow \Cw_t$ such that $\phi_\Cv$ extends to a morphism $\phi_\Cv:Z_c\rightarrow \Cv$ and there is an ample divisor $A_{Z_c}$ on $Z_c$ whose degree with respect to $g_c^*H_{\Cw_t}$ is bounded from above. We define $\Supp(g_{c*}A_{Z_c})$ to be the boundary part $\Cd_t$ on $\Cw_t$, then we show $\Cw_t\setminus \Cd_t$ has an big open subset which is isomorphic to an open subset of $Z\setminus \Supp(B_Z)$.
		
		Let $Z'$ be a log resolution of $(Z, B_Z)$ such that $\phi_\Cv$ extend to a morphism $Z'\rightarrow \Cv$, $\M$ descends on $Z'$ and there is a projective birational morphism $g: Z'\rightarrow\Cw_t$. 
		Define $B_{Z'}$ to be the strict transform of $B_Z$ plus the exceptional divisors of $Z'\rightarrow Z$, because $(Z',B_{Z'})$ is log smooth, $(Z',B_{Z'})$ is a dlt pair. Define $B'_{Z'}=\frac{1}{2}\mathrm{red}(B_{Z'})$. By the ACC for log canonical thresholds and the construction of the boundary part, $\mathrm{coeff}(B_{Z'})$ is in a DCC set $\Ci'$ depending only on $d,\Ci$. In particular, there is a positive rational number $\delta$ depending only on $d,\Ci$ such that $\mathrm{coeff}(B_{Z'})\geq \delta$, then $\delta B'_{Z'}\leq B_{Z'}$. 
		
		Suppose $\mathrm{dim}Z=n$. By the length of extremal rays, $K_{\Cw_t}+3nH_{\Cw_t}$ is ample. Also because $Z'$ and $\Cw_t$ are smooth, $K_{Z'}\geq g^*K_{\Cw_t}$, then $K_{Z'}+3ng^*H_{\Cw_t}$ is big. Let ${Z'}\dashrightarrow Z_c$ be the canonical model of $K_{Z'}+B'_{Z'}+3ng^*H_{\Cw_t}+3n\phi_\Cv^*\Ca$, where $\Ca$ is the very ample divisor on $\Cv$ defined in Theorem \ref{essential diagram}. Here we consider $({Z'},B'_{Z'}+3ng^*H_{\Cw_t}+3n\phi_\Cv^*\Ca)$ as a generalized log pair with nef part $3ng^*H_{\Cw_t}+3n\phi_\Cv^*\Ca$.
		$$\xymatrix{
			& {Z'} \ar[dl]_g \ar@{-->}[d] \ar[dr]^{\phi_\Cv}&   \\
			\Cw_t	&  Z_c\ar[l]^{g_c} \ar[r]_{\phi_{\Cv_c}}& 	 \Cv.
		} $$
		By \cite[Lemma 4.4]{BZ16}, the contraction ${Z'}\dashrightarrow Z_c$ is $g^*H_{\Cw_t}$ and $\phi_\Cv^*\Ca$-trivial, so there are two morphisms $g_c:Z_c\rightarrow \Cw_t$ and ${\phi_{\Cv_c}}:Z_c\rightarrow \Cv$. 
		
		Because $B'_{Z_c}$ is the pushforward of $B'_{Z'}$ and $\mathrm{coeff}(B'_{Z'})=\frac{1}{2}$, by \cite[Theorem 8.1]{BZ16}, there is a rational number $e\in (0,1)$ depending only on $\mathrm{dim}Z=n$ such that $K_{Z_c}+eB'_{Z_c}+3ng_c^*H_{\Cw_t}+3n\phi_{\Cv_c}^*\Ca$ is a big $\mathbb{Q}$-divisor. Also because the $\mathrm{coeff}(B'_{Z'})$ is equal to $\frac{1}{2}$, $\mathrm{coeff}(eB'_{Z'})$ is equal to $\frac{e}{2}$, then by \cite[Theorem 1.3]{BZ16}, there exists an integer $m\in \mathbb{N}$ depending only on $n$ such that $|m(K_{Z_c}+eB'_{Z_c}+3ng_c^*H_{\Cw_t}+3n\phi_{\Cv_c}^*\Ca)|$ defines a birational map. We choose $m$ sufficiently divisible such that both $m\frac{1-e}{2},m\frac{e}{2}\in \mathbb{N}$, then there is an effective divisor
		$$A'_{Z_c}\sim m(K_{Z_c}+eB'_{Z_c}+3ng_c^*H_{\Cw_t}+3ne\phi_{\Cv_c}^*\Ca).$$
		Define $A_{Z_c}:=A'_{Z_c}+m(1-e)B'_{Z_c}+3nm(1-e)\phi_{\Cv_c}^*\Ca$. Because $A'_{Z_c},B'_{Z_c},\phi_{\Cv_c}^*\Ca$ are effective and $Z_c$ is the canonical model of $K_{Z'}+B'_{Z'}+3ng^*H_{\Cw_t}+3n\phi_\Cv^*\Ca$, then
		$$A_{Z_c}\sim m(K_{Z_c}+B'_{Z_c}+3ng_c^*H_{\Cw_t}+3n\phi_{\Cv_c}^*\Ca)$$
		is an effective ample divisor and $\Supp(B'_{Z_c}+\phi_{\Cv_c}^*\Ca)\subset \Supp(A_{Z_c})$.
		
		Write $N:=2(2n+1)g_c^*H_{\Cw_t}$. By \cite[Lemma 3.2]{HMX13}, 
		\begin{equation}
			\begin{aligned}
				&\mathrm{red}(A_{Z_c}).N^{n-1}\\ 
				\leq & 2^n\vol(K_{Z_c}+\mathrm{red}(A_{Z_c})+N)\\
				\leq & 2^n\vol(K_{Z_c}+m(K_{Z_c}+B'_{Z_c}+3ng_c^*H_{\Cw_t}+3n\phi_{\Cv_c}^*\Ca)+N)\\
				\leq & 2^n\vol((m+1)K_{Z_c}+mB'_{Z_c}+(3nm+2(2n+1))g_c^*H_{\Cw_t}+ 3nm\phi_{\Cv_c}^*\Ca)\\
				\leq & (2a)^n\vol(K_{Z_c}+B'_{Z_c}+3ng_c^*H_{\Cw_t}+3n\phi_{\Cv_c}^*\Ca),
			\end{aligned}
		\end{equation}
		where $a=\mathrm{max}\{m+1,m+\frac{2(2n+1)}{m}\}$.
		
		Since $\M$ descends on $Z_c$, then by Theorem \ref{essential diagram}, we have $l\M_{Z'}\geq \phi_\Cv^* \Ca$.
		Also because ${Z'}\dashrightarrow Z_c$ is the canonical model of $K_{{Z'}}+B'_{{Z'}}+3ng^*H_{\Cw_t}+3n\phi_\Cv^*\Ca$, then 
		\begin{equation}
			\begin{aligned}
				&  \vol(K_{Z_c}+B'_{Z_c}+3ng_c^*H_{\Cw_t}+3n\phi_{\Cv_c}^*\Ca)\\
				=&  \vol(K_{Z'}+B'_{Z'}+3ng^*H_{\Cw_t}+3n\phi_\Cv^*\Ca)\\
				\leq & \vol(K_{Z'}+B'_{Z'}+3ng^*H_{\Cw_t}+3nl\M_{Z'}).
			\end{aligned}
		\end{equation}
		Consider the following equation
		\begin{equation*}
			\begin{aligned}
				&K_{Z'}+\delta B'_{Z'}+3ng^*H_{\Cw_t}+3nl\M_{Z'}\\
				=&\delta(K_{Z'}+B'_{Z'}+3ng^*H_{\Cw_t}+3nl\M_{Z'})+(1-\delta)(K_{Z'}+3ng^*H_{\Cw_t}+3nl\M_{Z'}).
			\end{aligned}
		\end{equation*}
		Because $K_{Z'}+3ng^*H_{\Cw_t}+3nl\M_{Z'}$ is big and $\delta B'_{Z'}\leq B_{Z'}$, we have
		\begin{equation}
			\begin{aligned}
				&  \vol(K_{Z'}+B'_{Z'}+3ng^*H_{\Cw_t}+3nl\M_{Z'})\\
				\leq &  (\frac{1}{\delta})^n\vol(K_{Z'}+\delta B'_{Z'}+3ng^*H_{\Cw_t}+3nl\M_{Z'})\\
				\leq &  (\frac{1}{\delta})^n\vol(K_{Z'}+B_{Z'}+3ng^*H_{\Cw_t}+3nl\M_{Z'}).
			\end{aligned}
		\end{equation}
		By assumption, $K_{Z'}+B_{Z'}+\M_{Z'}+g^*H_{\Cw_t}$ is big and $\mathrm{coeff}(B_{Z'})$ is in the DCC set $\Ci'$ depending only on $d,\Ci$, then by \cite[Theorem 8.1]{BZ16}, there is a positive rational number $e'<1$ depending only on $d,\Ci$ such that $K_{Z'}+B_{Z'}+e'\M_{Z'}+e'g^*H_{\Cw_t}$ is big. Consider the following inequality,
		\begin{equation*}
			\begin{aligned}
				&\alpha(K_{Z'}+B_{Z'}+3ng^*H_{\Cw_t}+3nl\M_{Z'}) +(1-\alpha)(K_{Z'}+B_{Z'}+e'g^*H_{\Cw_t}+e'\M_{Z'})\\
				\leq &K_{Z'}+B_{Z'}+\M_{Z'}+g^*H_{\Cw_t},
			\end{aligned}
		\end{equation*}
		where $\alpha=\frac{1-e'}{3nl-e'}$. Then
		\begin{equation*}
			\begin{aligned}
				&  \vol(K_{Z'}+B_{Z'}+3ng^*H_{\Cw_t}+3nl\M_{Z'})\\
				\leq & (\frac{1}{\alpha})^n\vol(K_{Z'}+B_{Z'}+\M_{Z'}+H)\\
				\leq & (\frac{1}{\alpha})^nV,
			\end{aligned}
		\end{equation*}
		where the last inequality comes from the assumptions on $\Cp(d,\Ci,v,V)$. Then, we have
		$$\mathrm{red}(A_{Z_c}).N^{n-1}\leq (\frac{2a}{\alpha\delta})^nV.$$ 
		By the boundedness of Chow variety, $(\Cw_t,\Supp((g_c)_*A_{Z_c}))$ is in a log bounded family. We denote this family of log pairs by $(\Cw,\Cd)\rightarrow T$, where $T$ is of finite type and $\Cd$ is reduced. 
		
		By the construction, $\Supp(B'_{Z'})$ contains the strict transform of $B_Z$ on $Z'$ plus the exceptional divisor of $Z'\rightarrow Z$, then $\Supp(B'_{Z_c})$ contains the strict transform of $B_Z$ on $Z_c$ plus exceptional divisor of $Z_c\dashrightarrow Z$. Recall that $\Supp(B'_{Z_c})\subset \Supp(A_{Z_c})$, then $\Supp((g_c)_*A_{Z_c})=\Cd_t$ contains the strict transform of $B_Z$ on $\Cw_t$ plus exceptional divisor of $\Cw_t\dashrightarrow Z$, then $\Cw_t\setminus \Cd_t$ has a big open subset which is isomorphic to an open subset of $Z\setminus \Supp(B_Z)$.
		
		\vspace{0.4cm}
		
		\textit{Step 4}. In this step we show that $\Phi\circ \phi $ extends to a morphism on $\Cw_t\setminus \Cd_t$ and it maps $\Cw_t\setminus \Cd_t$ to $\Cv^o$. We define $\Cu:=\Cw\setminus \Cd$, then (2) and (3) hold.

		Because $A_{Z_c}$ is effective and ample, $\Cw_t$ is smooth, by the negativity lemma, $g_c^* (g_c)_*A_{Z_c}-A_c$ is effective and contains all $g_c$-exceptional divisor. Define $\Cu:=\Cw\setminus \Cd$, then $\Cu_t=\Cw_t\setminus \Supp((g_c)_*A_{Z_c})$. Because $\Cw_t$ is smooth, the exceptional locus of $g_c$ is pure of codimension 1, then
		$$\Cu_t\subset Z_c\setminus \Supp(A_{Z_c}),$$
		and $\phi_{\Cv_c}:Z_c\rightarrow \Cv$ induce a morphism $\phi_{\Cv_c}:\Cu_t\rightarrow \Cv$. Because $\Supp(\phi_{\Cv_c}^*\Ca)\subset \Supp(A_{Z_c})$ and $\Cv^o\subset \Cv\setminus \Supp(\Ca)$, then $\phi_{\Cv_c}(\Cu_t)\subset \Cv^o$. 
	\end{proof}
	\end{thm}
	
	The following theorem shows that for any $(X,\Delta)\in \Cp(d,\Ci,v,V)$, the corresponding moduli map can be parametrized by a morphism on a scheme of finite type.
	
	\begin{thm}\label{parametrize moduli map and get universal ambro models}
		Let $\Cu \rightarrow T'$ be a quasi-projective smooth morphism, where $T'$ is of finite type. Let 
		$$\xymatrix{
			(\Cy_{\Ct^o},\Cr_{\Ct^o}) \ar[d] &\\
			\Ct^o   \ar[r]^{\pi} &\Cv^o.
		}$$
		be a commutative diagram, where
		\begin{itemize}
			\item $\pi$ is \'etale,
			\item $\Cv^o$ is weakly bounded, and
			\item $(\Cy_{\Ct^o},\Cr_{\Ct^o})$ is log smooth over $\Ct^o$.
		\end{itemize}
	
		Then there exists a projective log smooth morphism $(\Cw,\Cd)\rightarrow T$, where $T$ is of finite type and $\Cd$ is reduced, and a morphism $\Theta:\Cw\setminus \Cd \rightarrow \Cv^o$ such that:

		Suppose ${t'}\in T'$ is a closed point and $\phi_\Cv:\Cu_{t'}\rightarrow \Cv^o$ is a morphism, define $\bar{U}:=\Cu_{t'}\times _{\Cv^o}\Ct^o$ and $(Y_{\bar{U}},R_{\bar{U}}):=(\Cy_{\Ct^o},\Cr_{\Ct^o})\times _{\Ct^o}\bar{U}$. Suppose $U\hookrightarrow \Cu_{t'}$ is an open subset, $(X_U,\Delta_U)\rightarrow U$ is an lc-trivial fibration, and there exists $V$, two quasi-finite dominate morphisms $V\rightarrow U$ and $V\rightarrow \bar{U}$ such that the generic fiber of $(Y_{\bar{U}},\Supp(R_{\bar{U}}))\times _{\bar{U}} V\rightarrow V$ is a log resolution of the generic fiber of $(X_U,\Supp(\Delta_U))\times_ U V\rightarrow V$. Denote the moduli $\mathbf{b}$-divisor of $(X_U,\Delta_U)\rightarrow U$ by $\M$.
		Then there exists a closed point $t\in T$ such that
		\begin{enumerate}
			\item $\Cw_t\setminus \Cd_t$ is isomorphic to $\Cu_{t'}$, 
			\item $\M$ descends on $\Cw_t$, and
			\item $\phi_{\Cv}:=\Theta|_{\Cw_t\setminus \Cd_t}$.
		\end{enumerate}
		\begin{proof}
			Because $\Cv^o$ is weakly bounded, by Theorem \ref{weakly bounded morphisms are bounded}, there is a scheme of finite type $\mathscr{W}$ and a morphism $\Theta':\mathscr{W}\times \Cu \rightarrow \Cv^o$ depending only on $\Cu\rightarrow T'$ and $\Cv^o$, such that $\phi_{\Cv}=\Theta'|_{\{p\}\times \Cu_{t'}}$ for a closed point $p\in \mathscr{W}.$
			
			Define $T:=\mathscr{W}\times T'$, we replace $\Cu$ by $\mathscr{W}\times \Cu$ and define $\bar{\Cu}:=\Cu\times _{\Cv^o}\Ct^o$, where the morphism $\Cu\rightarrow \Cv^o$ is $\Theta'$. Let $t:=\{p\}\times t' \in T$, since $\phi_{\Cv}:=\Theta'|_{\{p\}\times \Cu_{t'}}$, we have $\bar{U}:=\bar{\Cu}_t$.
			
			Since $\Ct^o\rightarrow \Cv^o$ is \'etale, $\Cu\rightarrow T$ is smooth, then $\bar{\Cu}\rightarrow T$ is smooth and $\bar{\Cu}\rightarrow \Cu$ is \'etale. Let $K(\tilde{\Cu})/K(\Cu)$ be the Galois closure of $K(\bar{\Cu})/K(\Cu)$ and $\tilde{\Cu}\rightarrow \Cu$ be the \'etale Galois cover with group $G$. It is easy to see that every fiber of $\tilde{\Cu}\rightarrow T$ is smooth.

			Let $\tilde{\Cu}\hookrightarrow \tilde{\Cw}'/T$ be a compactification over $T$, $\tilde{\Cw}\rightarrow \tilde{\Cw}'$ be a $G$-equivariant log resolution of $(\tilde{\Cw}',\tilde{\Cw}'\setminus \tilde{\Cu})$ and $\Cw$ be the quotient of $\tilde{\Cw}$ by $G$. Because $\tilde{\Cw}$ is a compactification of $\tilde{\Cu}$ over $T$ and the quotient of $\tilde{\Cu}$ by $G$ is $\Cu$, then $\Cw$ is a compactification of $\Cu$ over $T$. Define $\Cd:=\Cw\setminus \Cu$, then $\Theta$ induces a morphism on $\Cw\setminus \Cd\rightarrow T$ which clearly satisfies (3). Because (1) automatically holds by the construction of $(\Cw,\Cd)$, we only need show that $(\Cw,\Cd)\rightarrow T$ satisfies (2).

			Write $(\Cy_{\tilde{\Cu}_t},\Cr_{\tilde{\Cu}_t}):=(\Cy_{\Ct^o},\Cr_{\Ct^o})\times _{\Ct^o}\tilde{\Cu}_t$, where the morphism $\tilde{\Cu}_t\rightarrow \Ct^o$ is the composition $\tilde{\Cu}_t\rightarrow \bar{\Cu}_t\rightarrow \Ct^o$. Since $\Cw_t\setminus \Cd_t$ is isomorphic to $\Cu_{t'}$, then $(\Cy_{\tilde{\Cu}_t},\Supp(\Cr_{\tilde{\Cu}_t}))$ is log smooth over $ \tilde{\Cu}_t$. Because $(\tilde{\Cw}_t,\tilde{\Cw}_t\setminus \tilde{\Cu}_t)$ is log smooth, by Theorem \ref{canonical bundle formula}.(d), the moduli $\mathbf{b}$-divisor of $(\Cy_{\tilde{\Cu}_t},\Cr_{\tilde{\Cu}_t})\rightarrow \tilde{\Cu}_t$ descends on $\tilde{\Cw}_t$, denote it by $\tilde{\M}$. 
			
			Let $(X_U,\Delta_U)\rightarrow U$ be an lc-trivial fibration that satisfies the conditions.
			Because $V\rightarrow \bar{\Cu}_t\rightarrow \Cu_t$ is a quasi-finite dominate morphism and $\tilde{\Cu}_t\rightarrow \Cu_t$ is finite, then we can replace $V$ by a finite cover such that there is a projective compactification $V\hookrightarrow W$ and a finite morphism $ W\rightarrow \tilde{\Cw_t}$. By assumption, the generic fiber of $(X,\Delta)\times_{\Cw_t} V\rightarrow V$ is crepant birationally equivalent to the generic fiber of $(\Cy_{\Ct^o},\Cr_{\Ct^o})\times_{\Ct^o}V\rightarrow V$, then the generic fiber of $(X,\Delta)\times_{\Cw_t} W\rightarrow W$ is crepant birationally equivalent to the generic fiber of $(\Cy_{\tilde{\Cu}_t},\Cr_{\tilde{\Cu}_t})\times_{\tilde{\Cu}_t}W\rightarrow W$.
			Note that the moduli part only depends on the crepant birational equivalent class of generic fiber. By Proposition \ref{moduli part stable under base change}, because the moduli $\mathbf{b}$-divisor $\tilde{\M}$ of $(\Cy_{\tilde{\Cu}_t},\Cr_{\tilde{\Cu}_t})\rightarrow \tilde{\Cu}_t$ descends on $\tilde{\Cw}_t$, then the moduli part of $(X,\Delta)\times_{\Cw_t} V\rightarrow V$ descends on $W$ and $\M$ descends on $\Cw_t$.
		\end{proof}
	\end{thm}
	
	The moduli map of an isotrivial fibration $(X,\Delta)\rightarrow Z$ is to map $Z$ into a closed point in the moduli space corresponding to a general fiber $(X_g,\Delta_g)$, which is exactly the same with a trivial fibration. Therefore, one can not distinguish different fibrations just through moduli maps, not even up to birational equivalence. The next theorem is used to compare volumes of different fibrations with the same moduli map.
	
	\begin{thm}\label{compare volumes}
		Let $(W,D),(W',D')$ be two projective log smooth pairs with reduced boundaries, $\tau:W'\rightarrow W$ a generically finite morphism such that $\tau^{-1}(W\setminus D)=W'\setminus D'$ and $\tau$ is finite on $W'\setminus D'$.
		Suppose $(Y,R)\rightarrow W$ is an lc-trivial fibration,
		$Y'\rightarrow W'$ is a fibration, and $R'$ is a $\mathbb{Q}$-divisor on $Y'$ such that 
		\begin{itemize}
			\item $Y$ is smooth,
			\item the generic fiber of $(Y,R)\rightarrow W$ is sub-klt and log smooth,
			\item $R'$ is horizontal over $W'$,
			\item $(Y',R' +\mathrm{red}(f_{Y'}^*D'))$ is log smooth and sub-lc,
			\item $(Y',R')$ is log smooth over $W'\setminus D'$, and
			\item the generic fiber of $(Y',R')\rightarrow W'$ is crepant birationally equivalent to the generic fiber of $(Y,R)\rightarrow W$ after a finite base change.
		\end{itemize}
		$(Y',R')\rightarrow W'$ defines an lc-trivial fibration over an open subset of $W'$. Denote the moduli $\mathbf{b}$-divisor of $(Y,R)\rightarrow W$ (respectively $(Y',R')\rightarrow W'$) by $\M$ (respectively $\M'$).
		
		Consider the following diagram 
		$$\xymatrix@R=0.3em@C=1.8em{
			\tilde{Y} \ar[rr]^{h} \ar[dr]_{f_{\tilde{Y}}} \ar[dddd]_{\tau_{Y}}&& \bar{Y} \ar[dl]^{f_{\bar{Y}}} \ar[dd]^{\pi_Y}\\
			&\bar{W} \ar[dd]^{\pi}&\\
			& & Y'\ar[dl]^{f_{Y'}}\\
			& W' \ar[dd]^{\tau}&\\
			Y \ar[dr]_{f_Y}&& \\
			& W, &
		}$$
		where 
		\begin{itemize}
			\item every variety is projective and normal,
			\item $\pi$ is finite,
			\item $\M$ descends on $W$ and $\M'$ descends on $W'$,
			\item $\bar{Y}$ the the normalization of the main component of $\bar{W}\times _{W'}Y'$,
			\item after base changing to $\bar{W}$, the generic fiber of $(Y',R')\rightarrow W'$ is crepant birationally equivalent to the generic fiber of $(Y,R)\rightarrow W$, and
			\item $\tilde{Y}$ is a common resolution of $\bar{Y}$ and $Y\times _W \bar{W}$.
		\end{itemize}
		Let $\Delta_Y$ be the $\mathbb{Q}$-divisor on $Y$ such that $K_Y+\Delta_Y\sim_{\mathbb{Q}}f_Y^*(K_W+D+\M_W)$ and $\Delta_Y|_{Y_\eta}=R|_{Y_\eta}$ for the general fiber $Y_\eta$ of $f_Y$. If 
		\begin{itemize}
			\item $(Y,\Delta_{Y,>0})$ is lc, and
			\item every codimension 1 point in $W\setminus D$ is dominated by a codimension 1 point in $Y\setminus\Supp(\Delta_{Y,<0})$,
		\end{itemize}
		then we have
		$$h_* \tau_Y^* (K_Y+\Delta_{Y,>0}) \leq \pi_Y^*(K_{Y'}+R'_{>0} +\mathrm{red}(f_{Y'}^*D')).$$
		\begin{proof}
			\textit{Step 1}. In this step we show the following inequality
			\begin{equation}\label{very hard equation}
				h_* \tau_Y^* (K_Y+\Delta_Y) \leq \pi_Y^*(K_{Y'}+R' +\mathrm{red}(f_{Y'}^*D')),
			\end{equation}
			then to prove the result, we only need to show that
			$$h_* \tau_Y^*\Delta_{Y,<0}\leq \pi_Y^*R'_{<0}.$$
			
			Because $(W',D')$ is log smooth and $\M'$ descends on $W'$, then $(W',D'+\M'_{W'})$ is a generalized lc pair. Also because $(Y',R')$ is log smooth over $W'\setminus D'$ and $R'$ is horizontal over $W'$, by Lemma \ref{inverse of canonical bundle formula}, there is a $\mathbb{Q}$-divisor $B'\leq \mathrm{red}(f_{Y'}^*D')$ on $Y'$ such that 
			$K_{Y'}+R'+B' \sim_{\mathbb{Q}} f_{Y'}^* (K_{W'}+D'+\M'_{W'})$. Then we have 
			$$f_{Y'}^* (K_{W'}+D'+\M'_{W'})\leq K_{Y'}+R'+\mathrm{red}(f_{Y'}^*D') $$
			
			Since $\pi^{-1}(W\setminus D)=W'\setminus D'$, $D'$ dominates $D$. Because both $(\Cw_t,\Cd_t)$ and $(\Cw'_t,\Cd'_t)$ are log smooth, by Hurwitz's formula (\cite[2.41.4]{Kol13}), we have
			$\tau^*(K_{W}+\Cd_t)\leq  K_{\Cw'_t}+\Cd'_t.$
			
			Since the generic fiber of $(Y',R')\rightarrow W'$ is crepant birationally equivalent to the generic fiber of $(Y,R)\rightarrow W$ after a finite base change, by Proposition \ref{moduli part stable under base change}, $\tau^* \M_{W} =\M'_{W'}$, then we have
			$$\tau^*(K_{W}+D+\M_W)\leq  K_{W'}+D'+\M'_{W'}.$$
			Also because $K_Y+\Delta_Y\sim_{\mathbb{Q}} f_Y^*(K_{W}+D+\M_W)$, we have
			$$h_* \tau_Y^* (K_Y+\Delta_Y) \leq \pi_Y^*(K_{Y'}+R' +\mathrm{red}(f_{Y'}^*D')).$$
			
			\vspace{0.3cm}
			
			\textit{Step 2}. In this step we decompose $h_* \tau_Y^*\Delta_{Y,<0}$ into several parts depending on its image on $\bar{W}$.
			
			Write $h_* \tau_Y^*\Delta_{Y,<0}=D_1+D_2+D_3+D_4$, where 
			\begin{itemize}
				\item every irreducible component of $D_1$ is horizontal over $\bar{W}$,
				\item every irreducible component of $D_2$ maps into a codimension $\geq 2$ subset in $\bar{W}$,
				\item every irreducible component of $D_3$ dominates an irreducible component of $\Supp (\pi^*D')$, and
				\item every irreducible component of $D_4$ dominates a prime divisor not contained in $\Supp (\pi^*D')$.
			\end{itemize}
			
			\vspace{0.3cm}
			
			\textit{Step 3}. In this step we show that $D_1=\pi_Y^*R'_{<0}$.
			
			Recall that $\pi$ is a finite morphism and $\bar{Y}$ the the normalization of the main component of $\bar{W}\times _{W'}Y'$, then $\pi_Y$ is a finite morphism. 
			Because $R'$ is horizontal over $W'$, then $R'_{<0}$ is horizontal over $W'$, $\pi^*_YR'_{<0}$ is horizontal over $\bar{W}$. Also because the generic fiber of $(Y,\Delta_Y)\rightarrow W$ is crepant birationally equivalent to the pullback of the generic fiber of $(Y',R')\rightarrow W'$, we have $D_1=\pi_Y^*R'_{<0}$. Adding both sides to Equation \eqref{very hard equation}, we have 
			\begin{equation}\label{D_1}
				h_* \tau_Y^* (K_Y+\Delta_Y)+D_1\leq \pi_Y^*(K_{Y'}+R'_{>0} +\mathrm{red}(f_{Y'}^*D')).
			\end{equation}
			
			\vspace{0.3cm}
			
			\textit{Step 4}. In this step we show that $h_* \tau_Y^* (K_Y+\Delta_Y)+D_2+D_3\leq \pi_Y^*(K_{Y'}+R'_{>0} +\mathrm{red}(f_{Y'}^*D'))$.

			Recall that $(Y',R')$ is log smooth over $W'\setminus D'$ and $\bar{Y}$ the the normalization of the main component of $\bar{W}\times _{W'}Y'$, then $f_{\bar{Y}}$ is flat over $\pi^{-1}(W'\setminus D')$, in particular,
			$D_2$ is contained in $f_{\bar{Y}}^*\pi_{\bar{Y}}^*D'$. 
			By assumption, $D_3$ is also contained in $f_{\bar{Y}}^*\pi_{\bar{Y}}^*D'$.
			
			Suppose $P$ is an irreducible component of $f_{\bar{Y}}^*\pi_{\bar{Y}}^*D'$. Applying Hurwitz's formula on 
			$\pi_Y^*(K_{Y'}+R' +\mathrm{red}(f_{Y'}^*D'))$
			, locally near $P$, we have
			$$\pi_Y^*(K_{Y'}+R' +\mathrm{red}(f_{Y'}^*D'))=K_{\bar{Y}}+P.$$
			
			Recall that $(Y,\Delta_{Y,> 0})$ is lc. By applying Hurwitz's formula on $\tau_Y$, locally near $P$, we have
			$h_* \tau_Y^* (K_Y+\Delta_{Y,> 0})\leq K_{\bar{Y}}+P$.
			Because $P$ is arbitrary, by comparing the coefficients of each component of $f_{\bar{Y}}^*\pi_{\bar{Y}}^*D'$ on both sides of Equation \eqref{very hard equation}, we have
			\begin{equation}\label{D_2+D_3}
				h_* \tau_Y^* (K_Y+\Delta_Y) +D_2+D_3\leq \pi_Y^*(K_{Y'}+R' +\mathrm{red}(f_{Y'}^*D')).
			\end{equation}
			
			\vspace{0.3cm}
			
			\textit{Step 5}. In this step we show that $D_4=0$.
			
			Suppose $P$ is an irreducible component of $D_4$ and it dominates a prime divisor $\bar{Q}$ on $\bar{W}$, by assumption, $\bar{Q}$ is not contained in $\pi^*D'$. 
			Recall that $\pi$ is a finite morphism, this means $\bar{Q}$ dominates a prime divisor $Q'$ on $W'$ and $Q'$ is not contained in $D'$. Since $\tau$ is finite on $W'\setminus D'$ and $\tau^{-1}(W\setminus D) =W'\setminus D'$, then $Q'$ dominates a divisor $Q$ on $W$ and $Q$ is not contained in $D$.
			
			By assumption, every codimension 1 point in $W\setminus D$ is dominated codimension 1 point in $Y\setminus\Supp(\Delta_{Y,<0})$, then $Q$ is dominated by a divisor $Q_Y$ on $Y$ and $Q_Y$ is not contained in $\Supp(\Delta_{Y,<0})$. It is easy to see that locally over the generic point of $Q$, because $K_Y+\Delta_Y+f^*Q\sim_{\mathbb{Q}} f_Y^*(K_W+Q+\M_W)$ and $Q_Y$ is an lc center of $(Y,\Delta_Y+f^*Q)$ dominating $Q$.
			
			Next we show that $Q_Y$ is the only prime divisor on $Y$ dominating $Q$ and $\mathrm{coeff}_P(h_* \tau_Y^*Q_Y)>0$.

			Because $f_{\bar{W}}$ is smooth over the generic point of $\bar{Q}$, then $\mathrm{coeff}_Pf^*_{\bar{W}}\bar{Q}=1$. Locally over the generic point of $\bar{Q}$, we have  $$K_{\bar{Y}}+\bar{R}+P:=f_{\bar{Y}}^*(K_{\bar{W}}+\bar{Q}+\bar{\M}_{\bar{W}}),$$
			where $\bar{R}:=\pi_{Y}^*R'$ and $\bar{\M}$ is the moduli $\mathbf{b}$-divisor of $f_{\bar{Y}}$. Since $\pi_Y$ is finite, $\bar{R}$ is horizontal over $\bar{W}$. By Proposition \ref{moduli part stable under base change}, because $\M$ descends on $W$ and $\M'$ descends on $W'$, $\bar{\M}_{\bar{W}}=\pi^*\M'_{W'}=\pi^*\tau^*\M_W$.
			
			Recall that by assumption $(Y',R')\rightarrow W'$ is log smooth and sub-klt over $W'\setminus D'$, then it is log smooth and sub-klt over the generic point of $Q'$ and $(\bar{Y},\bar{R})$ is log smooth and sub-klt over the generic point of $\bar{Q}$. Thus, $P$ is the only prime divisor on $\bar{Y}$ and the only lc center of $(\bar{Y},\bar{R}+P)$ which dominates $\bar{Q}$.
			
			Let $\tilde{R}$ be the strict transform of $\bar{R}$ on $\tilde{Y}$, then locally over the generic point of $\bar{Q}$, we have
			$$K_{\tilde{Y}}+\tilde{R}+\tilde{P}\sim_{\mathbb{Q}} h^*(K_{\bar{Y}}+\bar{R}+P)\sim_{\mathbb{Q}}f_{\tilde{Y}}^*(K_{\bar{W}}+\bar{Q}+\bar{\M}_{\bar{W}}),$$
			where $\tilde{P}$ is a $\mathbb{Q}$-divisor such that every irreducible component of $\tilde{P}$ dominates $\bar{Q}$. Because $P$ is the only prime divisor on $\bar{Y}$ and the only lc center of $(\bar{Y},\bar{R}+P)$ which dominates $\bar{Q}$, then $h^{-1}_*P$ is the only component of $\tilde{P}$ with coefficient 1, which is also the only lc center of $(\tilde{Y},\tilde{R}+\tilde{P})$ dominating $\bar{Q}$
			
			Recall that by assumption, $\tau\circ \pi$ is finite over the generic point of $Q$, by the Hurwit'z formula, locally over $Q$, we have $K_{\bar{W}}+\bar{Q}=\pi^*\tau^* (K_W+Q)$. Also because $\M$ descends on $W$ and $\bar{\M}$ descends on $\bar{W}$, we have
			$$K_{\bar{W}}+\bar{Q}+\bar{\M}_{\bar{W}}=\pi^*\tau^* (K_W+Q+\M_W).$$
			Locally over the generic point of $Q$, because $K_Y+\Delta_Y+f^*Q\sim_{\mathbb{Q}} f_Y^*(K_W+Q+\M_W)$, then we have that
			$$K_{\tilde{Y}}+\tilde{R}+\tilde{P}=\tau_Y^*(K_Y+\Delta_Y+f^*Q).$$
			Since $h^{-1}_*P$ is the only lc center of $(\tilde{Y},\tilde{R}+\tilde{P})$ dominating $Q$ and $Q_Y$ is an lc center of $(Y,\Delta_Y+f^*Q)$ dominating $Q$, then by Hurwitz's formula, $h^{-1}_*P$ dominates $Q_Y$. Also because $P$ is the only prime divisor on $\bar{Y}$ dominating $\bar{Q}$, then we have $Q_Y$ is the only divisor on $Y$ dominating $Q$ such that $\mathrm{coeff}_P(h_* \tau_Y^*Q_Y)>0$. Since $Q_Y$ is not contained in $\Delta_{Y,<0}$, then $D_4=0$.
			
			\vspace{0.3cm}
			
			\textit{Step 6}. In this step we complete the proof.
			
			Combining Equation \eqref{D_1}, \eqref{D_2+D_3} and $D_4=0$, we have 
			\begin{equation}\label{all D_i}
				\begin{aligned}
					& h_*\tau^*_Y(K_Y+\Delta_{Y,> 0})\\
					= &h_*\tau^*_Y(K_Y+\Delta_{Y})+D_1+D_2+D_3+D_4 \\
					\leq & \pi_Y^*(K_{Y'}+R'_{> 0}+\mathrm{red}(f_{Y'}^*D')).
				\end{aligned}
			\end{equation}
		\end{proof}
	\end{thm}

	\begin{proof}[Proof of Theorem \ref{bounded base implies bounded fibration}]
	
	\textit{Step 1}. In this step we introduce some notation from the previous theorems in this section.
	
	Fix a positive integer $d$, $\Ci\subset \mathbb{Q}\cap [0,1]$ a DCC set and $v,V$ two positive rational numbers. By Theorem \ref{essential diagram} and Theorem \ref{base is bounded}, we have the following commutative diagram
	$$\xymatrix{
		(\Cx_{\Cs^o},\Cd_{\Cs^o})  \ar[d] _{\Cf}& (\Cx_{\bar{\Cs^o}},\Cd_{\bar{\Cs^o}})\ar[l]_{\tau_\Cx} \ar[r]^{\rho_{\Cx}} \ar[d] &   (\Cx_{{\Ct^o}},\Cd_{{\Ct^o}}) \ar[d]_{\Cf_{{\Ct^o}}} &\\
		\Cs^o 	\ar@/_1pc/[rrr] _\Phi &  \bar{\Cs^o}\ar[l]_{\tau} \ar[r]^{\rho}& 	 {\Ct^o}   \ar[r]^{\pi} &\Cv^o,
	}$$
	where 
	\begin{itemize}
		\item $(\Cx_{\Cs^o},\Cd_{\Cs^o})\rightarrow {\Cs^o},(\Cx_{\Ct^o} ,\Cd_{\Ct^o})\rightarrow {\Ct^o}$ and $(\Cx_{\bar{\Cs^o}},\Cd_{\bar{\Cs^o}})\rightarrow \bar{\Cs}^o$ are locally stable families of log Calabi--Yau pairs over quasi-projective varieties,
		\item $(\Cx_{\Cs^o},\Cd_{\Cs^o})\rightarrow {\Cs^o}$ and $(\Cx_{\Ct^o} ,\Cd_{\Ct^o})\rightarrow {\Ct^o}$ have fiberwise log resolutions,
		\item $\Phi,\rho,\tau$ are surjective, $\pi$ is finite,
		\item $(\Cx_{\bar{{\Cs^o}}},\Cd_{\bar{{\Cs^o}}})\cong (\Cx_{\Cs^o},\Cd_{\Cs^o})\times_{\Cs^o} \bar{{\Cs^o}}\cong (\Cx_{\Ct^o},\Cd_{\Ct^o})\times_{\Ct^o} \bar{{\Cs^o}}$, and
		\item there is a family of relative ample over ${\Cs^o}$ divisors $\{\Gamma_{{\Cs^o},\alpha},\alpha\in \mathscr{C}\}$ on $\Cx_{\Cs^o}$, where $\mathscr{C}$ is a variety with positive dimension, such that $\tau_{\Cx}^* \Gamma_{{\Cs^o},\alpha} =\rho_\Cx^* \Gamma_{{\Ct^o},\alpha}$ for effective relative ample over ${\Ct^o}$ divisors $\{\Gamma_{{\Ct^o},\alpha},\alpha\in \mathscr{C}\}$ on $\Cx_{\Ct^o}$.
	\end{itemize}
	And by Theorem \ref{parametrize moduli map and get universal ambro models}, we have a projective log smooth morphism $(\Cw,\Cd)\rightarrow T$ with reduced boundary over a scheme $T$ of finite type and a morphism $\Theta:\Cw\setminus \Cd\rightarrow \Cv^o$, such that:
	If $(X,\Delta) \in \Cp(d,\Ci,v,V)$ be a log pair together with a fibration $f:X\rightarrow Z$ and $K_X+\Delta\sim_{\mathbb{Q}} f^*(K_Z+B_Z+\M_Z)$, then there is an open subset $U\subset Z$, a morphism $\phi:U\rightarrow \Cs^o$, and a closed point $t\in T$ such that
	\begin{itemize}
		\item $(X_U,\Delta_U)\cong (\Cx_{\Cs^o},\Cd_{\Cs^o})\times _{\Cs^o}U$,
		\item $Z\setminus \Supp(B_Z)$ has an open subset which is isomorphic to a big open subset of $\Cw_t\setminus \Cd_t$,
		\item $\Phi\circ \phi$ is equal to $\Theta|_{\Cw_t\setminus \Cd_t}$ on an open subset of $U\subset Z$, and
		\item $\M$ descends on $\Cw_t$.
	\end{itemize}

	Let $\Cy_{\Cs^o}\rightarrow \Cx_{\Cs^o}$ be a fiberwise log resolution of $(\Cx_{\Cs^o},\Cd_{\Cs^o})$ over $\Cs^o$ and $(\Cy_{\Cs^o},\Cr_{\Cs^o})\rightarrow (\Cx_{\Cs^o},\Cd_{\Cs^o})$ a crepant birational morphism, then $(\Cy_{\Cs^o},\Cr_{\Cs^o})$ is log smooth over $\Cs^o$. Similarily we define $(\Cy_{\Ct^o},\Cr_{\Ct^o})\rightarrow \Ct^o$ to be a fiberwise log resolution of $(\Cx_{\Ct^o},\Cd_{\Ct^o})\rightarrow \Ct^o$. Let $\Cc_{\Cs^o,\alpha}$ be the pullbacks of $\Gamma_{{\Ct^o},\alpha}$ on $\Cy_{{\Cs^o}}$, and $\Cc_{\Ct^o,\alpha}$ be the pullbacks of $\Gamma_{{\Ct^o},\alpha}$ on $\Cy_{{\Ct^o}}$. We pick a positive rational number $\delta$ such that every fiber of $(\Cy_{\Cs^o},\Cr_{\Cs^o}+\delta\Cc_{\Cs^o,\alpha})\rightarrow \Cs^o$ and $(\Cy_{\Ct^o},\Cr_{\Ct^o}+\delta\Cc_{\Ct^o,\alpha})\rightarrow \Ct^o$ is klt, then we replace $\Cc_{{\Cs^o},\alpha}$ by $\delta \Cc_{{\Cs^o},\alpha}$ and $\Cc_{{\Ct^o},\alpha}$ by $\delta \Cc_{{\Ct^o},\alpha}$. Note that $\delta$ only depends on $d,c,v$.
	
	\vspace{0.3cm}
	
	\textit{Step 2}. In this step we construction a projective birational morphism $g:Y\rightarrow X$, a fibration $f_Y:Y\rightarrow \Cw_t$ and a log pair $(Y,\Delta_Y)$ such that $K_Y+\Delta_Y\geq g^*(K_X+\Delta)$, the generic fiber of $f_Y$ is crepant birationally equivalent to the generic fiber of $f$ and $(Y,\Delta)\rightarrow \Cw_t$ satisfies the conditions in Theorem \ref{compare volumes}.
	
	Because $X$ is birationally equivalent to the pullback of $\Cy_{\Cs^o}\rightarrow \Cs^o$ by the morphism $\phi:U\rightarrow \Cs^o$, and $U$ is birationally equivalent to $\Cw_t$, we define
	$f_Y:Y\rightarrow \Cw_t$ to be an extension of the pullback of $\Cy_{\Cs^o}\rightarrow \Cs^o$ by $\phi:U\rightarrow \Cs^o$, and $R$ to be the closure of the pullback of $\Cr_{\Cs^o}$ by $\phi:U\rightarrow \Cs^o$. 
	Since a general fiber $Y_g$ has a morphism to a general fiber $X_g$, we may assume that there is a projective birational morphism $g:Y\rightarrow X$. Let $\Delta_Y$ be the $\mathbb{Q}$-divisor such that $K_Y+\Delta_Y\sim_{\mathbb{Q}}f_Y^*(K_{\Cw_t}+\M_{\Cw_t}+\Cd_t)$. 
	Because $(\Cy_{\Cs^o},\Cr_{\Cs^o,>0})$ is log smooth over $\Cs^o$, we may assume that $(Y,\Delta_{Y})$ is log smooth sub-lc. 
	
	Let $C_\alpha$ denote the closure of the pullback of $\Cc_{\Cs^o,\alpha}$ by $\phi$ in $Y$. Because $\bigcap_{\alpha \in \mathscr{C}} \Cc_{\Cs^o,\alpha}=\emptyset$, by blowing up some subvarieties in $Y$ which do not dominate over $\Cw_t$, we may assume that $(Y,\Delta_{Y,> 0}+C_\alpha+f_Y^* \mathcal{H}_t)$ is lc for every $\alpha \in \mathscr{C}$ and $\bigcap_{\alpha \in \mathscr{C}}\mathrm{Supp}(C_\alpha)=\emptyset$.
	
	Since $Z\setminus \Supp(B_Z)$ has an open subset which is isomorphic to a big open subset of $\Cw_t\setminus \Cd_t$, then every codimension 1 point $Q$ in $\Cw_t\setminus\Cd_t$ maps to a codimension 1 point in $Z\setminus \Supp(B_Z)$, and it is dominated by a codimension 1 point $P$ on $X$. Because the closure of $Q$ is not contained $\Supp(B_Z)$ and by Theorem \ref{inverse of canonical bundle formula} every irreducible component of $B_Z$ is dominated by an irreducible component of $\Delta_{Y,v}$, then the closure of $P$ is not contained in $\Supp(\Delta)$, thus, the image of $P$ on $Y$ is contained in $Y\setminus \Supp(\Delta_Y)$.
	
	\vspace{0.3cm}
	
	\textit{Step 3}. In this step we construct a projective log smooth morphism $(\Cy_{\Cw'},\Cr_{\Cw'})\rightarrow \Cw'$ over $T$ depending only on $d,\Ci,v,V$ such that the generic fiber of $(Y,\Delta_Y)\rightarrow \Cw_t$ is crepant birationally equivalent to the generic fiber of $(\Cy_{\Cw'_t},\Cr_{\Cw'_t})\rightarrow \Cw'_t$ after a finite base change.
	
	Let $\Cu:=\Cw\setminus \Cd$, $\Cu':=\Cu\times_{\Cv^o}\Ct^o$, where the morphism $\Cu\rightarrow \Cv^o$ is $\Theta$, denote the morphism $\Cu'\rightarrow \Ct^o$ by $\Theta'$, let $\Cy_{\Cu'}$ be the pullback of $\Cy_{\Ct^o}\rightarrow \Ct^o$ by $\Theta'$ and $\Cy_{\Cu'_t}$ the fiber over $\Cu'_t$.

	After passing to a stratification of $T$, we choose $\Cw'$ to be a fiberwise smooth compactification of $\Cu'$ over $T$ such that there is a generically finite morphism $\Pi:\Cw'\rightarrow \Cw$ and $\Cd':=\Cw'\setminus \Cu'$ is a relatively snc divisor over $T$.

	Let $(\Cy_{\Cu'},\Cr_{\Cu'}+\Cc_{\Cu',\alpha})\rightarrow \Cu',\alpha\in \mathscr{C}$ be the pullback of $(\Cy_{\Ct^o},\Cr_{\Ct^o}+\Cc_{\Ct^o,\alpha})\rightarrow \Ct^o$ by $\Theta':\Cu'\rightarrow \Ct^o$. Because $(\Cy_{\Ct^o},\Cr_{\Ct^o}+\Cc_{\Ct^o,\alpha})$ is log smooth over $\Ct^o$ for all $\alpha\in \mathscr{C}$, $(\Cy_{\Cu'},\Cr_{\Cu'}+\Cc_{\Cu',\alpha})$ is log smooth over $\Cu'$. Also because $(\Cw',\Cd')\rightarrow T$ is a log smooth morphism, we can choose an extension $(\Cy_{\Cw'},\Cr_{\Cw'}+\Cc_{\Cw',\alpha})$ of $(\Cy_{\Cu'},\Cr_{\Cu'}+\Cc_{\Cu',\alpha})$ and replace $\mathscr{C}$ by an open subset, such that
	\begin{itemize}
		\item $\Cy_{\Cu'}\rightarrow \Cu'$ extends to a morphism $\mathfrak{F}:\Cy_{\Cw'}\rightarrow \Cw'$,
		\item $\Cr_{\Cw'_s}$ is horizontal over $\Cw'_s$, and
		\item $(\Cy_{\Cw'_s},\Cr_{\Cw'_s}+\Cc_{\Cw'_s,\alpha}+\mathrm{red}(f_{\Cw'_s}^*\Cd'_s))$ is log smooth sub-lc, 
	\end{itemize}
	for every closed point $s\in T$ and every $\alpha\in \mathscr{C}$.
	
	Let $\mathcal{L}$ be relatively very ample line bundle on $\Cw$ over $T$ such that $\omega_{{\Cw}_s}\otimes{\mathcal{L}}_s$ is big for every $s\in T$.
	Because $\mathcal{L}_s$ is very ample, we can choose a general member $\mathcal{H}_s\in |\mathcal{L}_s|$ such that $(\Cy_{\Cw'_s},\Cr_{\Cw'_s}+\Cc_{\Cw'_s,\alpha}+f_{\Cw'_s}^*\mathcal{H}'_s+\mathrm{red}(f_{\Cw'_s}^*\Cd'_s))$ is log smooth sub-lc for every closed point $s\in T$, where $\mathcal{H}_s':=(\Pi|_{\Cw'_s})^*\mathcal{H}_s$. By the invariance of plurigenera \cite[Theorem 1.8]{HMX13}, there is a number $v' \gg 0$ such that
	$$\vol(K_{\Cy_{\Cw'_s}}+\Cr_{\Cw'_s,> 0}+\Cc_{\Cw'_s,\alpha}+f_{\Cw'_s}^*\mathcal{H}'_s+\mathrm{red}(f_{\Cw'_s}^*\Cd'_s))\leq v',$$
	for all $s\in T$ and $\alpha \in \mathscr{C}$. Note that $v'$ depends only on $d,\Ci,v,V$
	
	\vspace{0.3cm}
	
	\textit{Step 4}. In this step we construct a log general type log pair $(Y,\Delta_{Y,> 0}+C_0+f_Y^*\mathcal{H}_t)$ and show that $\vol(K_Y+\Delta_{Y,> 0}+C_0+f_Y^*\mathcal{H}_t)\leq v'$ for a closed point $0\in \mathscr{C}$.
	
	Let $\M'$ be the moduli $\mathbf{b}$-divisor corresponding to $(\Cy_{\Cw'_t},\Cr_{\Cw'_t})\rightarrow \Cw'_t$.
	Recall that $(\Cx_{\Cs^o},\Cd_{\Cs^o})\times_{\Cs^o} \bar{{\Cs^o}}\cong (\Cx_{\Ct^o},\Cd_{\Ct^o})\times_{\Ct^o} \bar{{\Cs^o}}$, $\Ct^o\rightarrow \Cv^o$ is a finite morphism and $\phi_{\Cv}$ maps $\Cu_t$ into $\Cv^o$. Because the generic fiber of $(Y,\Delta_Y)\rightarrow \Cw_t$ is equal to the generic fiber of the pullback of $(\Cy_{{\Cs^o}},\Cr_{\Cs^o})\rightarrow \Cs^o$ via $\phi:U\rightarrow \Cs^o$ and the generic fiber of $(\Cy_{\Cw'_t},\Cr_{\Cw'_t})\rightarrow \Cw'_t$ is equal to the generic fiber of the pullback of $(\Cy_{{\Ct^o}},\Cr_{\Ct^o})\rightarrow \Ct^o$ via $\Theta'|_{\Cu'_t}:\Cu'_t\rightarrow \Ct^o$,
	then the generic fiber of $(Y,\Delta_Y)\rightarrow \Cw_t$ is crepant birationally equivalent to the generic fiber of $(\Cy_{\Cw'_t},\Cr_{\Cw'_t})\rightarrow \Cw'_t$ after a finite base change. Denote the moduli $\mathbf{b}$-divisor of $(\Cy_{\Cw'_t},\Cr_{\Cw'_t})\rightarrow \Cw'_t$ by $\M'$, since $\M$ descends on $\Cw_t$, by Proposition \ref{moduli part stable under base change}, $\M'$ descends on $\Cw'_t$.
	Then we can construct the following diagram.
	$$\xymatrix@R=0.3em@C=1.8em{
		\tilde{Y} \ar[rr]^{h_{Y}} \ar[dr] \ar[dddd]_{\tau_{Y}}&& \bar{Y} \ar[dl] \ar[dd]^{\pi_{Y}}\\
		&\bar{W} \ar[dd]&\\
		& & \Cy_{\Cw'_t}\ar[dl]^{f_{\Cw'_s}}\\
		&\Cw'_t \ar[dd]&\\
		Y \ar[dr]_{f_Y}&& \\
		& \Cw_t, &
	}$$
	\begin{itemize}
		\item every variety is projective and normal,
		\item $\bar{W}\rightarrow \Cw'_t$ is finite,
		\item $\M$ descends on $\Cw_t$ and $\M'$ descends on $\Cw_t'$,
		\item $\bar{Y}$ is the the normalization of the main component of $\bar{W}\times _{\Cw'_t}\Cy_{\Cw'_t}$,
		\item after base changing to $\bar{W}$, the generic fiber of $(Y',R')\rightarrow W'$ is crepant birationally equivalent to the generic fiber of $(Y,R)\rightarrow W$, and
		\item $\tilde{Y}$ is a common resolution of $\bar{Y}$ and $Y\times _{\Cw_t} \bar{W}$.
	\end{itemize}
	
	By Theorem \ref{compare volumes}, we have 
	\begin{equation}\label{equation 1 in proof of the main theorem}
		h_{Y*}\tau_Y^*(K_Y+\Delta_{Y,>0}+f_Y^*\mathcal{H}_s)\leq \pi_Y^*(K_{\Cy_{\Cw'_s}}+\Cr_{\Cw'_s,> 0}+f_{\Cw'_s}^*\mathcal{H}'_s+\mathrm{red}(f_{\Cw'_s}^*\Cd'_s)).
	\end{equation}

	Because $\tau_{Y}$ is generically finite, there are only finitely many prime divisor on $\tilde{Y}$ which maps to a codimension $\geq 2$ subvariety on $Y$.
	Note $\bigcap_{\alpha \in \mathscr{C}}\Supp(C_\alpha)=\emptyset$. Also because $\mathscr{C}$ is a variety of positive dimension, then there exists a closed point $0\in \mathscr{C}$ such that $\Supp(C_0)$ does not contain the image of any prime divisor on $\tilde{Y}$, then every irreducible component of $\tau_{Y}^* C_0$ dominates a divisor on $Y$.
	
	Because $\Cc_{\Cs^o}\times_{\Cs^o}\bar{\Cs^o}=\Cc_{\Ct^o}\times_{\Ct^o}\bar{\Cs^o}$, then $\tau_{Y}^* C_0|_{\tilde{Y}_\eta}=h_Y^*\pi_{Y}^* \Cc_{\Cw'_t,0}|_{\bar{Y}_\eta}$, where $\bar{Y}_\eta$ is the generic fiber of $\bar{Y}\rightarrow \bar{W}$ and $\tilde{Y}_\eta$ is the generic fiber of $\tilde{Y}\rightarrow \bar{W}$. 
	Recall that $C_0$ is horizontal over $\Cw_t$ and $\Supp(C_0)$ does not contain the image of any prime divisor on $\tilde{Y}$, then $\tau_{Y}^* C_0$ is horizontal over $\bar{W}$. Also because $h_Y^*\pi_{Y}^* \Cc_{\Cw'_t,0}$ is effective, we have $\tau_Y^* C_0 \leq h_Y^*\pi_{Y}^* \Cc'_{\Cw'_t,0}$, then
	\begin{equation}\label{equation 2 in proof of the main theorem}
		h_{Y*}\tau_Y^* C_0 \leq \pi_{Y}^* \Cc'_{\Cw'_t,0}.
	\end{equation}
	
	Recall that $\mathcal{H}_s':=\Pi_s^*\mathcal{H}_s$, then
	\begin{equation}\label{equation 3 in proof of the main theorem}
		h_{Y*}\tau^*_{Y}f_Y^*\mathcal{H}_t=\pi_Y^*f_{\Cw'_t}^*\mathcal{H}'_t.
	\end{equation} 

	Now combine Equations \eqref{equation 1 in proof of the main theorem}, \eqref{equation 2 in proof of the main theorem}, and \eqref{equation 3 in proof of the main theorem}, we have 
	$$h_{Y*}\tau_Y^*(K_Y+\Delta_{Y,> 0}+C_0+f_Y^*\mathcal{H}_t)\leq \pi_Y^*(K_{\Cy'_{\Cw'_t}}+\Cr'_{\Cw'_t,> 0}+\mathrm{red}(f_{\Cw'_t}^*\Cd'_t)+\Cc'_{\Cw'_t,0}+f_{\Cw'_t}^*\mathcal{H}'_t).$$
	Because $\mathrm{deg}(\tau_Y)\geq \mathrm{deg}(\pi_Y)$ and $\vol(K_{\Cy_{\Cw'_s}}+\Cr_{\Cw'_s,> 0}+\Cc_{\Cw'_s,0}+f_{\Cw'_s}^*\mathcal{H}'_s+\mathrm{red}(f_{\Cw'_s}^*\Cd'_s))\leq v'$, we have that $\vol(K_Y+\Delta_{Y,> 0}+C_0+f_Y^*\mathcal{H}_t)\leq v'$.

	Recall that $K_Y+\Delta\sim_{\mathbb{Q}} f_Y^*(K_{\Cw_t}+\Cd_t+\M_{\Cw_t})$. Because $K_{\Cw_t}+\mathcal{H}_t$ is big and $C_0$ is effective and big over $\Cw_t$, then $K_Y+\Delta_{Y,> 0}+C_0+f_Y^*\mathcal{H}_t$ is big.
	\vspace{0.3cm}
	
	\textit{Step 5}. In this step we finish the proof.

	Because the crepant birational equivalence of a general fiber of $(Y,\Delta_Y)\rightarrow Z$ depends only on $d,c,v$, by \cite[Remark 2.10]{Flo14}, there exists a natural number $l$ depending only on $d,c,v$ such that $l(K_Y+\Delta_Y)\sim l(K_{\Cw_t}+\Cd_t+\M_t)$ is Cartier, in particular, $\mathrm{coeff}(\Delta_{Y,>0})\subset \frac{1}{l}\mathbb{N}\cap [0,1]$ is in a finite set.
	
	Note that $\mathrm{coeff}(\Delta_{Y,> 0})$ is in a finite set depending only on $d,c,v$ and $\mathrm{coeff}(C_0)=\delta$, where $\delta$ also depends only on $d,c,v$, then $\mathrm{coeff}(\Delta_{Y,> 0}+C_0+f_Y^*\mathcal{H}_t)$ is in a finite set depending only on $d,c,v$. By \cite[Lemma 2.3.4]{HMX13}, \cite[Theorem 3.1]{HMX13}, and \cite[Theorem 1.3]{HMX14}, $(Y,\Delta_{Y,> 0}+C_0+f_Y^*\mathcal{H}_t)$ is log birationally bounded, then $(Y,\Delta_{Y,>0})$ is log birationally bounded. In particular, $X$ is birationally bounded.
	
	Recall that by  Theorem \ref{base is bounded} and Theorem \ref{parametrize moduli map and get universal ambro models}, $Z\setminus \Supp(B_Z)$ has an open subset which is isomorphic to a big open subset of $\Cw_t\setminus \Cd_t$, then $\Cd_t$ contains the exceptional divisor of $\Cw_t\dashrightarrow Z$. Let $W\rightarrow Z$, $W\rightarrow \Cw_t$ be a common resolution of $\Cw_t$ and $Z$, then by the negativity lemma, the pullback of $K_{\Cw_t}+\Cd_t+\M_t$ on $W$ is larger than or equal to the pullback of $K_Z+B_Z+\M_Z$ on $W$. Since $K_Y+\Delta_Y\sim_{\mathbb{Q}} f^*(K_{\Cw_t}+\Cd_t+\M_t)$ and there is a birational contraction $g:Y\rightarrow X$, we have $K_Y+\Delta_Y\geq h^*(K_X+\Delta)$ and $\mathbf{L}_{\Delta,Y}\leq \Delta_{Y,>0}$.
	
	Because $(Y,\Delta_{Y,>0})$ is log birationally bounded, there exists a log smooth morphism $(\Cy,\Cq)\rightarrow S$ with reduced boundary depending only on $d,\Ci,V,v$, where $S$ is of finite type, and a closed point $s\in S$ such that there is a birational map $\Cy_s\dashrightarrow Y$ and $\Cq_s$ contains the strict transform of $\Delta_{Y,>0}$ and the exceptional divisor of $\Cy_s\dashrightarrow Y$. Because $(Y,\Delta_{Y,>0})$ is lc, then $\Cq_s\geq \mathbf{L}_{\Delta_{Y,>0},\Cy_s}$. Also because $\mathbf{L}_{\Delta,Y}\leq \Delta_{Y,>0}$, we have $\Cq_s\geq \mathbf{L}_{\Delta,\Cy_s}$.
	\end{proof}

	\section{Proof of Theorem \ref{main theorem 1}}
	\begin{thm}\label{bounded family and good minimal model}
		Let $d$ be a positive integer, $\Ci\subset [0,1]\cap \mathbb{Q}$ a DCC set, and $v,u$ two positive rational numbers. Let $\Cg_{klt}(d,\Ci,v,u)$ be the set of log pairs defined in Definition \ref{definition of good minimal models}. 
		
		Then there exists a projective log smooth morphism $(\Cy,\Cb)\rightarrow T$, where $\Cb$ is a $\mathbb{Q}$-divisor depending only on $d,\Ci,v,u$, such that for every $(X,\Delta)\in \Cg_{klt}(d,\Ci,v,u)$, there is a closed point $s\in T$ such that $(\Cy_s,\Cb_s)$ has a good minimal model which is crepant birationally equivalent to $(X,\Delta)$.
		
		\begin{proof}
			Fix $(X,\Delta)\in \Cg_{klt}(d,\Ci,v,u)$ with the fibration $f:X\rightarrow Z$ and $K_X+\Delta\sim_{\mathbb{Q}} f^*(K_Z+B_Z+\M_Z)$. Because $K_Z+B_Z+\M_Z$ is ample, by the main result of \cite{Bir21b} and \cite{Jia21}, there is an integer $r>0$ depending only on $d,\Ci,v,u$ such that $r(K_Z+B_Z+\M_Z)$ is very ample. Define $H:=r(K_Z+B_Z+\M_Z)$, then $\vol(H+K_Z+B_Z+\M_Z)\leq (r+1)^n\vol(K_Z+B_Z+\M_Z) \leq (r+1)^n\mathrm{Ivol}(K_X+\Delta)=(r+1)^nv$, thus Theorem \ref{bounded base implies bounded fibration} applies. 
		
			Define $V:=(r+1)^nv$, by Theorem \ref{bounded base implies bounded fibration}, there is a projective log smooth morphism $(\Cy,\Cq)\rightarrow S$ with reduced boundary depending only on $d,\Ci,v,V$ such that there is a closed point $s\in S$ and a birational map $g:\Cy_s \dashrightarrow X$ such that $\mathbf{L}_{\Delta,\Cy_s}\leq \Cq_s$.

			Because the crepant birational equivalence of a general fiber of $(X,\Delta)\rightarrow Z$ is in a log bounded family depending only on $d,c,v$ and $r(K_Z+B_Z+\M_Z)$ is Cartier, by \cite[Remark 2.10]{Flo14}, there exists a natural number $l$ depending only on $d,c,v$ such that $l(K_X+\Delta)\sim l(K_Z+B_Z+\M_Z)$ is Cartier. Also because $(X,\Delta)$ is klt, we have $\mathbf{L}_{\Delta,\Cy_s}\leq (1-\frac{l}{l})\Cq_s$. 
			
			Let $\chi:\Cy'\rightarrow \Cy$ be a projective birational morphism such that
			\begin{enumerate}
				\item $\chi$ only blows up strata of $(\Cy,\Cq)$, and
				\item $(\Cy',\mathbf{L}_{(1-\frac{1}{l})\Cq,\Cy'})$ has terminal singularities.
			\end{enumerate}
			Let $p': X'\rightarrow \Cy'_s$ and $q:X'\rightarrow X$ be a common resolution, $p:=\chi_s\circ p':X'\rightarrow \Cy_s$. 
			$$\xymatrix@R=0.5em@C=2.4em{
				&X'\ar[dd]_p \ar[dl]_{p'} \ar[dr]^q&\\
				\Cy'_s\ar[dr]_{\chi_s}& &  X\\
				&  \Cy_s \ar@{-->}[ur]_{g}.&
			}$$
			Write $K_{X'}+\Delta'\sim_{\mathbb{Q}}q^*(K_X+\Delta)$, then $\mathbf{L}_{\Delta,\Cy_s}=p_*\Delta'_{> 0}$. Because $K_X+\Delta$ is nef, by the negativity lemma, $K_{X'}+\Delta'\leq p^*(K_{\Cy_s}+p_*\Delta')= p^*(K_{\Cy_s}+\mathbf{L}_{\Delta,\Cy_s})$. Also because $\mathbf{L}_{\Delta,\Cy_s} \leq (1-\frac{1}{l})\Cq_s$, then $\mathbf{L}_{\Delta,\Cy'_s}\leq \mathbf{L}_{(1-\frac{1}{l})\Cq_s,\Cy'_s}$ and $(\Cy'_s, \mathbf{L}_{\Delta,\Cy'_s})$ has terminal singularities. We replace $(\Cy,\Cq)$ by $(\Cy',\Supp(\mathbf{L}_{(1-\frac{1}{l})\Cq,\Cy'}))$ and assume that $(\Cy_s,\mathbf{L}_{\Delta,\Cy_s})$ has terminal singularities. 
			
			Next, we prove that $(\Cy_s,\mathbf{L}_{\Delta,\Cy_s})$ has a good minimal model and the good minimal model is crepant birationally equivalent to $(X,\Delta)$. 
			
			Because $K_X+\Delta$ is semiample, then $(X',\mathbf{L}_{\Delta,X'})$ has a good minimal model.
			By the definition of the $\mathbf{b}$-divisor $\mathbf{L}_{\Delta}$, $\mathbf{L}_{\Delta,\Cy_s}=p_*  \mathbf{L}_{\Delta,X'}$. Since $(\Cy_s, \mathbf{L}_{\Delta,\Cy_s})$ has terminal singularities, $K_{X'}+ \mathbf{L}_{\Delta,X'} -p^*(K_{\Cy_s}+ \mathbf{L}_{\Delta,\Cy_s} )$ is effective and $p$-exceptional. Then by \cite[Lemma 2.10]{HMX13}, $(\Cy_s,\mathbf{L}_{\Delta,\Cy_s} )$ has a good minimal model since $(X', \mathbf{L}_{\Delta,X'})$ has a good minimal model, and it is easy to see that a good minimal model of $(\Cy_s,\mathbf{L}_{\Delta,\Cy_s} )$ is crepant birationally equivalent to $(X,\Delta)$.
			
			By the construction, we have $\mathrm{coeff}(\mathbf{L}_{\Delta,\Cy_s})\subset \{0,\frac{1}{l},\cdots,\frac{l-1}{l}\}$ and $\mathbf{L}_{\Delta,\Cy_s}\leq \Cq_s$. Write $\Cq=\sum_{k=1}^m\Cq_k$ as the sum of all irreducible components. For a vector $a=\{a_k\}_{1\leq k\leq m}$, define $a\Cq=\sum_{k=1}^m a_k\Cq_k$. Let $J$ be the set of all vectors $a$ of dimension $m$ such that the coefficient of each element of $a$ is in $\{0,\frac{1}{l},\cdots,\frac{l-1}{l}\}$, then $J$ is a finite set. We define $T:=J\times S$, and $\Cb$ to be the $\mathbb{Q}$-divisor supported on $J\times \Cq$ such that $\Cb|_{ \{a\}\times S}=a\Cq$ for all $a\in J$. Then $(\Cy,\Cb)\rightarrow T$ satisfies the requirement.
		\end{proof}
	\end{thm}
	\begin{proof}[Proof of Theorem \ref{main theorem 1}]
		Let $(\Cy,\Cb)\rightarrow T$ be the projective log smooth morphism defined in Theorem \ref{bounded family and good minimal model}. To prove the boundedness of $\Cg(d,\Ci,v,u)$, we are free to pass to a stratification of $T$ and assume that $T$ is smooth and irreducible.

		Let $(X,\Delta)\in \Cg(d,\Ci,v,u)$, suppose $s\in T$ is the corresponding closed point, by Theorem \ref{bounded family and good minimal model}, a good minimal model of $(\Cy_s,\Cb_s)$ is crepant birationally equivalent to $(X,\Delta)$.
		Because $(\Cy,\Cb)\rightarrow T$ is a log smooth morphism, by \cite[Theorem 1.2]{HMX18}, $(\Cy,\Cb)$ has a good minimal model $(\Cy^m,\Cb^m)$ over $T$. Because $T$ is smooth, by \cite[Theorem 3.1]{HMX18}, $(\Cy^m_u,\Cb^m_u)$ is a semi-ample model of $(\Cy_u,\Cb_u)$ for every closed point $u\in T$ and $(X,\Delta)$ is crepant birationally equivalent to $(\Cy^m_s,\Cb^m_s)$. Therefore, $(X,\Delta)$ is bounded modulo crepant birational equivalence.
	\end{proof}

	\section{Rationally connected Calabi--Yau pairs}
	Recall that a Calabi--Yau variety is a smooth, projective, simply connected variety $Y$ such that $K_Y\sim 0$ and $H^i(Y,\mathcal{O}_Y)=0$ for $0<i<\mathrm{dim}Y$.
	\begin{cor}{\cite[Corollary 5.1]{BDCS20}}\label{rationally connected}
		Let $Y$ be a Calabi--Yau variety. Assume that $Y$ is endowed with a morphism $f: Y\rightarrow Z$ of relative dimension $0< d<\mathrm{dim} Y$. Then $Z$ is rationally connected.
	\end{cor}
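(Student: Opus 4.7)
The plan is to apply the canonical bundle formula (Theorem~\ref{canonical bundle formula}) to $f: Y \to X$ to endow $X$ with the structure of a generalised klt Calabi--Yau pair, and then to derive rational connectedness by contradiction using the MRC fibration of $X$ together with Hodge-theoretic vanishing on $Y$.

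First, after replacing $f$ by a suitable birational model so that the standard normal crossing assumptions of Theorem~\ref{canonical bundle formula} hold, the canonical bundle formula combined with $K_Y \sim 0$ yields
\[
K_X + B_X + \M_X \sim_{\mathbb{Q}} 0,
\]
with $B_X \geq 0$ (by Theorem~\ref{canonical bundle formula}(f)) and $\M_X$ a nef moduli $\mathbf{b}$-divisor. Since $(Y,0)$ is klt, the klt version of Lemma~\ref{inverse of canonical bundle formula}(c) shows that $(X, B_X + \M_X)$ is a generalised klt pair, hence a generalised klt Calabi--Yau pair.

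Next, assume for contradiction that $X$ is not rationally connected, and let $\pi: X \dashrightarrow Z$ be its MRC fibration. Then $\dim Z \geq 1$ and $Z$ is not uniruled, so $K_Z$ is pseudo-effective by Boucksom--Demailly--P\u{a}un--Peternell. After passing to birational models, lift $Y \to X \dashrightarrow Z$ to a morphism $g: Y' \to Z'$ with $Y'$ birational to $Y$ and $Z'$ smooth. Apply the canonical bundle formula to $g$: since $K_{Y'}$ is $\mathbb{Q}$-linearly equivalent to an effective exceptional divisor and $B'_{Z'} + \M'_{Z'}$ is pseudo-effective, one obtains that $-K_{Z'}$ is pseudo-effective, and combined with the pseudo-effectivity of $K_{Z'}$ this forces $K_{Z'} \equiv 0$. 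Pulling back a non-zero holomorphic top-form from $Z'$ (existence ensured by abundance for $K_{Z'} \equiv 0$, possibly after a finite \'etale cover) via $g$ produces a non-zero holomorphic form of degree $\dim Z'$ on $Y$, with $0 < \dim Z' \leq \dim X < \dim Y$. This contradicts the strict Calabi--Yau vanishing $h^{p,0}(Y) = 0$ for $0 < p < \dim Y$.

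The main obstacle is the intermediate step of showing $K_{Z'} \equiv 0$: one must track how the effective exceptional correction in $K_{Y'}$ (coming from the birational modifications used to resolve $\pi$) interacts with the boundary part $B'_{Z'}$ produced by the canonical bundle formula, and argue that the pseudo-effectivity of $-K_{Z'}$ survives. The Hodge-theoretic finale depends on abundance for varieties with numerically trivial canonical class; since $\dim Z' \leq \dim X < \dim Y$, the required abundance statements fall within the range where they are classical, but a clean treatment may require Kawamata's theorem on surjective morphisms with $K_Y \equiv 0$ in order to canonically produce the section of $\omega_{Z'}^m$ that is pulled back.
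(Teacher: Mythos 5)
The paper does not prove this statement; it is cited verbatim from \cite{BDCS20}, so there is no internal argument to compare against. Your overall strategy --- endow $X$ with a generalised klt Calabi--Yau structure via the canonical bundle formula, descend along the MRC fibration, and conclude by Hodge-theoretic vanishing on $Y$ --- is the natural one and is close in spirit to the argument in \cite{BDCS20}. However, there are two genuine gaps, only the first of which you flag.

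\emph{Gap 1 (the step $K_{Z'}\equiv 0$).} Your proposed route through the CBF applied to $g\colon Y'\to Z'$ is more problematic than ``tracking the exceptional correction.'' After resolving $Y\to X\dashrightarrow Z$, one has $K_{Y'}\sim E$ with $E\geq 0$ exceptional over the smooth $Y$, and every component of $E$ has integral coefficient $\geq 1$. The natural candidate for an lc-trivial fibration is $(Y',-E)\to Z'$, but if any component of $E$ is horizontal over $Z'$ (which will happen in general once one blows up centres dominating $Z'$ to flatten the map), then $\lceil(-E)|_F\rceil$ is strictly negative on the general fibre $F$, so the hypothesis $h^0(F,\mathcal O_F(\lceil(-E)|_F\rceil))=1$ in Theorem~\ref{canonical bundle formula} fails and the CBF simply does not apply to $(Y',-E)\to Z'$. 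A cleaner path to $-K_Z$ pseudo-effective is to run the canonical bundle formula at the level of $X$ rather than $Y$: the generalised pair $(X,B_X+\M_X)$ is generalised klt Calabi--Yau, the general fibre of $X\dashrightarrow Z$ is rationally connected (hence of Fano type), and the CBF for generalised pairs over a Fano-type fibration endows $Z$ with a generalised klt Calabi--Yau structure $K_Z+B_Z+\M_Z\sim_{\mathbb Q}0$, $B_Z\geq 0$, $\M_Z$ pseudo-effective. Combined with $K_Z$ pseudo-effective from BDPP this forces $K_Z\equiv 0$, $B_Z=0$, $\M_Z\equiv 0$.

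\emph{Gap 2 (the Hodge-theoretic finish).} The phrase ``possibly after a finite \'etale cover'' conceals a real issue. Once you know $K_{Z'}\equiv 0$ and $Z'$ smooth, the strict Calabi--Yau vanishing on $Y$ only tells you $h^0(Z',\Omega^p_{Z'})=0$ for $0<p\leq\dim Z'$; this is perfectly compatible with $\omega_{Z'}$ being torsion of order $m>1$ (think of an Enriques-type base). In that case the canonical cover $\widetilde{Z}'\to Z'$ is a genuine degree-$m$ connected \'etale cover, and pulling back its top form only produces a form on the induced cover $\widetilde{Y}'=Y'\times_{Z'}\widetilde{Z}'$, which is a \emph{nontrivial} connected \'etale cover of $Y'$; there is no reason for that form to descend to $Y'$, and the cover $\widetilde{Y}'$ (birational to a nontrivial \'etale cover of $Y$) need not itself satisfy the strict Calabi--Yau vanishing, so nothing is contradicted. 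To close this you need to invoke the precise definition of Calabi--Yau used in \cite{BDCS20}: if the definition contains (or implies) $\pi_1(Y)=1$, then since $Y'\to Z'$ has connected fibres the surjection $\pi_1(Y')\twoheadrightarrow\pi_1(Z')$ forces $\pi_1(Z')=1$, so the canonical cover is trivial, $\omega_{Z'}\cong\mathcal O_{Z'}$, and the pullback argument is clean. Absent that hypothesis the argument, as written, does not go through, and indeed one can cook up a smooth $Y$ with $K_Y\sim 0$ and $h^{p,0}(Y)=0$ for $0<p<\dim Y$ but infinite $\pi_1$ that fibres over an Enriques surface, so some restriction on $\pi_1$ is genuinely necessary and must be made explicit.
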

	By Corollary \ref{rationally connected}, it is easy to see that Theorem \ref{boundedness of Calabi--Yau with polarized fibration structure} is a special case of the following theorem.
	\begin{thm}\label{boundedness of Calabi--Yau with polarized fibration structure:stronger version}
		Fix positive integers $d,l$ and a positive rational number $v$. Then the set of projective varieties $Y$ such that
		\begin{itemize}
			\item $Y$ is terminal of dimension $d$,
			\item $lK_Y\sim 0$,
			\item $f:Y\rightarrow Z$ is a fibration, 
			\item $Z$ is rationally connected, and
			\item there is an integral divisor $A$ on $Y$ such that $A_g:=A|_{Y_g}$ is ample and $\vol(A_g)=v$, where $Y_g$ is a general fiber of $f$,
		\end{itemize}
		is bounded modulo flops.
	\end{thm}

	\begin{defn}[{\cite[Definition 2.2]{Bir18}}]
		Let $d,r$ be natural numbers and $\epsilon\in [0,1]$ be a positive rational number. A generalized $(d,r,\epsilon)$-Fano type fibration consists of a projective generalized log pair $(Z,\Delta+\M_Z)$ and a fibration $f: Z\rightarrow W$ such that
		\begin{itemize}
			\item $(Z,\Delta+\M_Z)$ is a projective generalized $\epsilon$-lc pair of dimension $d$,
			\item $K_Z+\Delta+\M_Z\sim _{\mathbb{Q}}f^*L$ for some $\mathbb{Q}$-divisor $L$,
			\item $-K_Z$ is big over $W$, i.e. $Z$ is of Fano type over $W$,
			\item $A$ is very ample divisor on $W$ with $A^{\mathrm{dim}W}\leq r$, and
			\item $A-L$ is ample.
		\end{itemize}
	\end{defn}
	
	\begin{thm}[{\cite[Theorem 2.3]{Bir18}}\label{bound Fano type fibration}]
		Let $d,r$ be natural numbers and $\epsilon,\tau\in [0,1]$ be positive rational numbers. Consider the set of all generalized $(d,r,\epsilon)$-Fano type fibrations $(Z,\Delta+\M_Z)\rightarrow W$ such that 
		\begin{itemize}
			\item we have $0\leq B \leq \Delta$ whose non-zero coefficients are $\geq \tau$, and
			\item $-(K_Z+B)$ is big over $W$.
		\end{itemize}
		Then the set of such log pairs $(Z,B)$ is log bounded. 
	\end{thm}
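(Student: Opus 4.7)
The plan is to combine boundedness of the base $Z$, boundedness of the generic fibre of $f$, and an effective polarisation of the total space $X$ with bounded intersection numbers, from which log boundedness of $(X,B)$ will follow. First, the hypothesis that $A$ is very ample on $Z$ with $A^{\dim Z}\le r$ embeds $Z$ as a subvariety of bounded degree in a projective space of bounded dimension; hence $(Z,A)$ lies in a bounded family, and after stratifying the relevant Hilbert scheme one may assume $Z$ varies in a single family $\mathcal{Z}\to T$ equipped with a relatively very ample divisor extending $A$.

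Second, since $-K_X$ is big over $Z$ and $(X,\Delta+\M_X)$ is generalised $\epsilon$-lc, the general fibre $F$ of $f$ is an $\epsilon$-lc Fano type variety of dimension at most $d$. By the BAB theorem, such $F$ form a bounded family, so there is a uniform integer $N=N(d,\epsilon)$ for which $-NK_F$ is very ample of bounded top self-intersection, with uniformly bounded Cartier index for $K_F$. Now I would polarise the total space as follows: the ampleness of $A-L$ on $Z$ together with $K_X+\Delta+\M_X\sim_{\mathbb{Q}}f^*L$ gives that $f^*A-(K_X+\Delta+\M_X)$ is nef on $X$, and combining this with the relative bigness of $-K_X$ and effective freeness/very ampleness results for $\epsilon$-lc generalised Fano type fibrations, one produces an effectively bounded integer $n=n(d,r,\epsilon)$ such that a divisor of the shape $n(f^*A-K_X)$ is very ample on $X$ with bounded top self-intersection. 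This embeds the total space into a bounded family.

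Finally, the lower bound $\tau$ on the nonzero coefficients of $B$, combined with the inequality $B\le\Delta$ and the bounded degree of $\Supp\Delta$ with respect to the polarisation produced above, forces $\Supp B$ to consist of boundedly many prime components whose coefficients lie in a finite set, completing log boundedness of $(X,B)$. The main obstacle will be in the polarisation step: the generalised nef part $\M$ need not be effective nor semiample on $X$, so to apply effective base point freeness one must first pass to a birational model on which $\M$ descends, apply effective descent and boundedness of complements for generalised pairs, and then push the resulting polarisation back to $X$ without losing control of intersection numbers. This is where the bulk of the technical work lies, and where Birkar's machinery of generalised pairs and complements becomes essential; the Fano type assumption over $Z$ is critical because it ensures the relative MMP terminates and allows effective control of the moduli-type contributions.
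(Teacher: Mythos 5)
The paper does not prove this statement: it is quoted verbatim as \cite[Theorem~2.2]{Bir18} and used as a black box, so there is no in-paper proof against which to compare your argument. Reading your sketch as an attempted proof of Birkar's theorem itself, there is a genuine gap precisely at the step you flag as the crux. The claim that one can produce an effectively bounded $n=n(d,r,\epsilon)$ with $n(f^*A-K_X)$ very ample of bounded top self-intersection by invoking ``effective freeness/very ampleness results for $\epsilon$-lc generalised Fano type fibrations'' is circular: no such off-the-shelf result is available independently of the theorem being proved, and producing such a polarisation \emph{is} the substance of the theorem. Birkar's actual argument goes through the theory of complements -- boundedness of $n$-complements for Fano type varieties over a base is used to construct a bounded complement $B^{+}$ with $(X,B^{+})$ of bounded singularities and $n(K_X+B^{+})\sim 0$ over $Z$, after which one runs an MMP to a model where BAB-type boundedness can be applied and then transfers the conclusion back. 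Simply asserting that the constants in an unnamed effective freeness statement are bounded does not replace that chain.

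Two secondary imprecisions compound the gap. First, you apply BAB to the general fibre $F$ to conclude $-NK_F$ is very ample with bounded self-intersection, but $-K_X$ is only assumed \emph{big} over $Z$, not relatively nef or ample, so $-K_F$ need not be nef and cannot become very ample by rescaling; what one actually uses is boundedness of $\epsilon$-lc Fano \emph{type} (or $\epsilon$-lc log Calabi--Yau) varieties, a consequence of BAB but not BAB in the form you invoke. Second, even granting a bounded polarisation on the total space, you pass from bounded volume to log boundedness of $(X,B)$ in one sentence; this step genuinely uses the $\epsilon$-lc hypothesis (to pass from birational boundedness to actual boundedness and to control $\Supp\Delta$, not merely $\Supp B$), and your proposal does not say how $\epsilon$ enters. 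As written, the sketch is a plausible roadmap but not a proof: the crucial polarisation step is asserted rather than established, and the mechanism by which boundedness of complements and the $\epsilon$-lc condition are deployed is absent.
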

	
	Recall that a birational contraction is a birational map $\phi:Z\dashrightarrow Z'$ such that $\phi^{-1}$ does not contract any divisor.
	
	\begin{thm}[{\cite[Theorem 3.1]{BDCS20}}\label{Fano fibration tower}]
		Let $(Z,\Delta)$ be a projective klt Calabi--Yau pair with $\Delta\neq 0$. Then there exists a birational contraction 
		$$\pi :Z\dashrightarrow Z'$$
		to a $\mathbb{Q}$-factorial log Calabi--Yau pair $(Z',\Delta':=\pi_*\Delta)$, $\Delta'\neq 0$ and a tower of morphisms
		$$Z'=Z_0\xrightarrow{p_0}Z_1\xrightarrow{p_1}Z_2\xrightarrow{p_2}\cdots\xrightarrow{p_{k-1}}Z_k$$
		such that
		\begin{itemize}
			\item for any $1\leq i<k$ there exists a boundary $\Delta_i\neq 0$ on $Z_i$ and $(Z_i,\Delta_i)$ is a klt Calabi--Yau pair,
			\item for any $0\leq i<k$ the morphism $p_i:Z_i\rightarrow Z_{i+1}$ is a Mori fiber space, with $\rho(Z_i/Z_{i+1})=1$, and 
			\item either $\mathrm{dim}Z_{k}=0$, or $\mathrm{dim}Z_k>0$ and $Z_k$ is a klt variety with $K_{Z_{k}}\sim_{\mathbb{Q}}0$.
		\end{itemize}
	\end{thm}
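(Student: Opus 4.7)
The plan is to build the tower by iterating a Minimal Model Program, producing at each level a Mori fibre space whose base inherits a klt Calabi--Yau structure via the canonical bundle formula; the birational modifications accumulated across iterations are absorbed into a single initial birational contraction $\pi$.

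\textbf{First Mori fibre space.} I would first take a small $\mathbb{Q}$-factorialization $X^{\circ}\to X$, so that $(X^{\circ},\Delta^{\circ})$ is $\mathbb{Q}$-factorial klt Calabi--Yau with $\Delta^{\circ}\neq 0$. Since $-K_{X^{\circ}}\sim_{\mathbb{Q}}\Delta^{\circ}$ is effective and nonzero, $K_{X^{\circ}}$ is not pseudo-effective. Every $K_{X^{\circ}}$-negative extremal ray $R$ satisfies $(K_{X^{\circ}}+\Delta^{\circ})\cdot R=0$, so any $K_{X^{\circ}}$-MMP is crepant for the Calabi--Yau pair and therefore preserves klt singularities. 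By BCHM, running this MMP with scaling of an ample divisor terminates with a Mori fibre space $p_{0}:X_{0}\to X_{1}$. The pushforward $\Delta_{0}$ of $\Delta^{\circ}$ to $X_{0}$ is nonzero, because otherwise $K_{X_{0}}\sim_{\mathbb{Q}}0$ would be pseudo-effective, contradicting the existence of the MFS $p_{0}$.

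\textbf{Inductive step.} If $\dim X_{1}=0$, stop with $k=1$. Otherwise, apply the canonical bundle formula to $p_{0}$:
\[
K_{X_{0}}+\Delta_{0}\sim_{\mathbb{Q}}p_{0}^{*}(K_{X_{1}}+B_{X_{1}}+\M_{X_{1}}),
\]
so $(X_{1},B_{X_{1}}+\M_{X_{1}})$ is a generalized klt Calabi--Yau pair. If $K_{X_{1}}\sim_{\mathbb{Q}}0$ (equivalently $B_{X_{1}}+\M_{X_{1}}\sim_{\mathbb{Q}}0$), terminate with $X_{k}:=X_{1}$, landing in the second alternative. Otherwise, produce an effective $\mathbb{Q}$-divisor $\Delta_{1}\sim_{\mathbb{Q}}-K_{X_{1}}$ with $(X_{1},\Delta_{1})$ klt Calabi--Yau and $\Delta_{1}\neq 0$: explicitly, find an effective representative $M_{1}'\sim_{\mathbb{Q}}\M_{X_{1}}$ and set $\Delta_{1}:=B_{X_{1}}+M_{1}'$ with $M_{1}'$ general enough to preserve klt. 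Apply the first step to $(X_{1},\Delta_{1})$, yielding a Mori fibre space $X_{1}\dashrightarrow X_{1}'\to X_{2}$ after further $\mathbb{Q}$-factorialization and MMP. To avoid an intermediate birational map between levels of the tower, I absorb $X_{1}\dashrightarrow X_{1}'$ into the overall $\pi$ by lifting it through $p_{0}$: because $\rho(X_{0}/X_{1})=1$, each $K_{X_{1}}$-negative extremal contraction on $X_{1}$ extends uniquely to a corresponding contraction on $X_{0}$, replacing $X_{0}$ by a new $\mathbb{Q}$-factorial Calabi--Yau model from which both Mori fibre spaces are morphisms. Iterating produces the full tower, terminating in at most $\dim X$ steps since $\dim X_{i}$ strictly decreases.

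\textbf{Main obstacle.} The crucial difficulty is constructing the effective boundary $\Delta_{1}$ in the inductive step, i.e., replacing the nef moduli class $\M_{X_{1}}$ by an effective representative. This is a nontrivial effective non-vanishing statement; the available input is that $X_{1}$ is rationally connected as the base of a Mori fibre space from a klt Calabi--Yau (compare Corollary \ref{rationally connected}), together with the positivity of $-K_{X_{1}}=B_{X_{1}}+\M_{X_{1}}$ as an effective plus nef sum coming from the canonical bundle formula. A secondary, more technical issue is the relative MMP bookkeeping to absorb the iterative birational modifications into a single initial $\pi$; this is handled via the $\rho(X_{i}/X_{i+1})=1$ condition, which makes each MMP step at level $i+1$ lift uniquely to a step at level $i$.
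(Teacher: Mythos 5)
This statement is quoted in the paper from \cite[Theorem 3.1]{BDCS20} without proof, so the comparison is against the known argument there; your skeleton does in fact reproduce that argument (run a $K$-MMP, which is $(K+\Delta)$-trivial, to reach a Mori fibre space; put a klt Calabi--Yau structure on the base via the canonical bundle formula; induct; absorb the base modifications into the total space). The first step as you wrote it is correct: $K_{X^{\circ}}$ is not pseudo-effective because an effective nonzero divisor is not numerically trivial and the pseudo-effective cone contains no line, BCHM gives termination at a Mori fibre space, and $(K+\Delta)$-triviality of every step preserves the crepant klt Calabi--Yau structure.

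However, the step you flag as the ``main obstacle'' is a genuine gap as written, and it is not something one can get from rational connectedness or from ``effective plus nef'' positivity alone: choosing a general effective $M_1'\sim_{\mathbb{Q}}\M_{X_1}$ so that $(X_1,B_{X_1}+M_1')$ is klt requires knowing that $\M_{X_1}$ has an effective (indeed controlled) representative, and a nef class need not be $\mathbb{Q}$-effective. The missing ingredient is Ambro's theorem that the moduli $\mathbf{b}$-divisor of a klt-trivial fibration is $\mathbf{b}$-nef and $\mathbf{b}$-abundant (\cite[Theorem 0.2 and Theorem 3.3]{Amb05}; cf.\ Theorem \ref{moduli part is nef and good}), which yields exactly a $\mathbb{Q}$-divisor $\Delta_1\sim_{\mathbb{Q}}B_{X_1}+\M_{X_1}$ with $(X_1,\Delta_1)$ klt; this is precisely how the present paper argues in the proof of Theorem \ref{boundedness of base in induction step}. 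Without invoking this, the induction does not start. The second soft spot is your lifting mechanism: it is not true in any straightforward sense that a $K_{X_1}$-negative extremal contraction ``extends uniquely'' to a contraction of $X_0$ because $\rho(X_0/X_1)=1$; the correct tool is Proposition \ref{3.7} (= \cite[Proposition 3.7]{BDCS20}), which, given the birational contraction $X_1\dashrightarrow X_1'$ of the base, produces via a relative MMP a model of $X_0$ isomorphic to it in codimension one and admitting a morphism to $X_1'$; one then checks that relative Picard numbers are preserved under small birational maps of $\mathbb{Q}$-factorial varieties, so the Mori fibre space structure survives. With these two citations supplied, your argument closes up and coincides with the proof in \cite{BDCS20}.
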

	\begin{prop}[{\cite[Proposition 3.7]{BDCS20}}\label{3.7}]
		Let $(X,\Delta)$ be a projective klt pair and let $f:X\rightarrow Z$ be a fibration between normal varieites. Assume that $K_X+\Delta\sim_{\mathbb{Q}} 0$ and $Z$ is $\mathbb{Q}$-factorial. Let $Z\dashrightarrow Z'$ be a birational contraction of normal projective varieties. Then there exist a projective $\mathbb{Q}$-factorial klt pair $(X',\Delta')$ which is isomorphic to $(X,\Delta)$ in codimension 1 and a projective fibration of normal varieties $f':X'\rightarrow Z'$.
	\end{prop}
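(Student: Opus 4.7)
The plan is to realize $(Y', D')$ as the output of a $(K_Y+D)$-trivial minimal model program on $Y$ guided by the pullback of an ample divisor from $Z'$. First, after a small $\mathbb{Q}$-factorialization $Y^\sharp\to Y$ (which exists because $(Y,D)$ is klt and is an isomorphism in codimension~$1$), I would reduce to the case that $Y$ is $\mathbb{Q}$-factorial. Then I would fix a very ample Cartier divisor $A'$ on $Z'$, take a common log resolution $p:W\to Z$, $q:W\to Z'$ of the birational contraction $\phi:Z\dashrightarrow Z'$, and set $\tilde{A}:=p_*(q^*A')$; this is a $\mathbb{Q}$-Cartier divisor on $Z$ by $\mathbb{Q}$-factoriality of $Z$, and agrees with $A'$ over the locus where $\phi$ is an isomorphism. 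Put $H:=f^*\tilde{A}$ on $Y$.

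For $0<\epsilon\ll 1$, the pair $(Y,D+\epsilon H)$ is klt, and $K_Y+D+\epsilon H\sim_{\mathbb{Q}}\epsilon H$ since $K_Y+D\sim_{\mathbb{Q}}0$; moreover $H$ is pseudo-effective because $\tilde{A}$ is big on $Z$. Invoking the existence of minimal models for klt pairs in this situation, I would obtain a good minimal model $(Y',D'+\epsilon H')$ on which $K_{Y'}+D'+\epsilon H'$ is semi-ample. Pushing the relation $K_Y+D\sim_{\mathbb{Q}}0$ through the MMP yields $K_{Y'}+D'\sim_{\mathbb{Q}}0$, so semi-ampleness of $K_{Y'}+D'+\epsilon H'$ is equivalent to that of $H'$; the induced semi-ample fibration defines a morphism $f':Y'\to Z''$, which one identifies with the given $Z'$ by tracking how $H'$ arises from the birational transform of the ample $A'$.

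The main obstacle is showing that this MMP has no divisorial contractions, so that $Y\dashrightarrow Y'$ is an isomorphism in codimension~$1$. Each step is $(K_Y+D)$-trivial and $H$-negative, so a divisorial contraction would require some divisor $E\subset Y$ to carry a covering family of $H$-negative curves. Decomposing $\phi$ into elementary steps, a divisorial contraction step $Z\to Z_1$ contributes a nef pulled-back class to $\tilde{A}$ on the contracted divisor, ruling out $H$-negative divisors on $Y$ over it; a flip step has flipping locus of codimension~$\geq 2$ on $Z$, so generically its preimage on $Y$ has codimension~$\geq 2$ as well and only produces small contractions. The subtle case is when $f$ is not equidimensional and $f^{-1}(\text{flipping locus})$ contains a divisor on $Y$; handling this requires a careful inductive argument along the steps of $\phi$, exploiting the Calabi--Yau condition $K_Y+D\sim_{\mathbb{Q}}0$ to preclude $H$-negativity on such a divisor.
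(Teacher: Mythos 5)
Your high-level strategy (small $\mathbb Q$-factorialization of $Y$, pull back an ample $A'$ from $Z'$ to a big $\mathbb Q$-Cartier $\tilde A$ on $Z$ via $\mathbb Q$-factoriality, set $H=f^*\tilde A$, perturb the Calabi--Yau pair by $\epsilon H$, and run a $(K_Y+D+\epsilon H)$-MMP to reach a model with a contraction to the ample model of $H$) is the natural one and is, I believe, essentially the route of \cite[Prop.~3.7]{BDCS20}. However, as written your argument has two genuine gaps, and you explicitly leave the second one unresolved.

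\textbf{Termination and existence of a good minimal model.} You ``invoke the existence of minimal models for klt pairs in this situation,'' but $H=f^*\tilde A$ is not big on $Y$ when $\dim Y>\dim Z$, so $D+\epsilon H$ is not big and \cite{BCHM}-type results do not apply directly. What saves you is abundance-type input: $K_Y+D+\epsilon H\sim_{\mathbb Q}\epsilon f^*\tilde A$ with $\tilde A$ big on $Z$, so the divisor is the pull-back of a big divisor and one can invoke \cite{Lai10} or run a relative MMP over $Z$ via Theorem~\ref{Bir12 1.8}; alternatively one can use that $Z$ itself is klt, which follows from the canonical bundle formula applied to $f$ (a fact you never record but which is essential here). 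This step is fixable, but as stated it is a gap.

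\textbf{No divisorial contractions.} You correctly flag this as the crux and then write that ``the subtle case \dots\ requires a careful inductive argument \dots\ to preclude $H$-negativity on such a divisor.'' That sentence is exactly where a proof is required and none is given. Several points in the sketch surrounding it are also off: decomposing $\phi\colon Z\dashrightarrow Z'$ into ``elementary steps'' is not automatic without knowing $Z$ is klt (or of Fano type) and choosing an auxiliary boundary; and the phrase ``preclude $H$-negativity on such a divisor'' is not the right criterion --- curves inside a divisor $E$ with $f(E)$ of codimension $\geq 2$ can certainly be $H$-negative; what must be ruled out is that $E$ lies in $\mathbf B_-(H)$, i.e.\ that $\sigma_E(H)>0$, so that a covering family of $H$-negative curves exists. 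Ruling this out does use the hypotheses $K_Y+D\sim_{\mathbb Q}0$, $D\geq 0$, $(Y,D)$ klt together with the movability of $\tilde A$ (which itself requires a negativity-lemma computation you do not carry out), and the argument is genuinely delicate; it cannot be waved through with ``a careful inductive argument along the steps of $\phi$.'' A cleaner route, consistent with the toolkit actually set up in this paper, is to pass to a model $Y_W\to W$ over a common resolution $W$ of $Z\dashrightarrow Z'$, write $K_{Y_W}+D_W^{\geq 0}\sim_{\mathbb Q}F_W$ with $F_W\geq 0$ exceptional, and apply Theorem~\ref{Bir12 1.8} over $Z'$ after verifying very-exceptionality --- but that, too, requires real work you have not supplied. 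Finally, the identification of the ample model $Z''$ of $H'$ with $Z'$ is asserted (``by tracking how $H'$ arises \dots'') without the needed comparison $h^0(Y,mH)=h^0(Z,m\tilde A)=h^0(Z',mA')$, which uses that the exceptional divisor between $Z$ and $Z'$ is effective and exceptional over both sides; this too needs a negativity-lemma argument and the birational-contraction hypothesis on $\phi^{-1}$.

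In short: right road map, but the two load-bearing steps (termination with a good minimal model, and absence of divisorial contractions) are not actually proved, and the second one in particular is acknowledged as open in your own sketch.
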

	\begin{thm}[{\cite[Theorem 4.1]{BDCS20}}]\label{rationally connected implies bounded}
		Fix positive integers $d,l$. Consider varieties $Z$ such that
		\begin{itemize}
			\item $Z$ is projective klt of dimension $d$,
			\item $Z$ is rationally connected, and
			\item $lK_Z\sim 0$.
		\end{itemize}
		Then the set of such $Z$ is bounded up to flops.
	\end{thm}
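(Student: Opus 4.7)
The plan is to prove boundedness modulo flops in three stages: first bound the rationally connected base $X$ using its induced generalised Calabi--Yau structure, then use this together with the polarised general fibre to obtain birational boundedness of $Y$, and finally upgrade to boundedness modulo flops via the MMP argument of Theorem \ref{bounded family and good minimal model}.

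First I would set up the canonical bundle formula for $f:Y\to X$. Since $Y$ is terminal with $lK_Y\sim 0$, taking $\Delta_Y=0$ and a suitable log resolution yields a generalised klt Calabi--Yau structure $(X,B_X+M_X)$ with $K_X+B_X+M_X\sim_\mathbb{Q}0$. Applying Lemma \ref{polarised Calabi-Yau pairs are bounded} to $(Y_g,0)$ polarised by $A_g$ of volume $v$ places the general fibre in a bounded family depending only on $d,l,v$. Consequently the coefficients of $B_X$ take values in a finite set, and by Theorem \ref{moduli part is nef and good} the moduli $\mathbf{b}$-divisor $\mathbf{M}$ descends on a fixed birational model with controlled Cartier index.

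Next I would bound the base $X$ modulo flops; this is the main obstacle. The strategy is to convert $(X,B_X+M_X)$ into a genuine klt Calabi--Yau pair $(X,\Delta_X)$ with $\Delta_X\neq 0$ by replacing $M_X$ with an effective $\mathbb{Q}$-linearly equivalent divisor (which uses the semi-ampleness input of Theorem \ref{moduli part is nef and good}(4) after a suitable base change, together with Proposition \ref{3.7} to descend). Theorem \ref{Fano fibration tower} then gives a tower of Mori fibre spaces $X_0\to X_1\to\cdots\to X_k$ of klt Calabi--Yau pairs. Each step is a generalised $(\dim X_i,r_i,\epsilon)$-Fano type log Calabi--Yau fibration with uniform parameters inherited from the bounded data on $X$, so Theorem \ref{bound Fano type fibration} bounds it; at the bottom of the tower, either $\dim X_k=0$ or $X_k$ is a rationally connected klt Calabi--Yau, handled by Theorem \ref{rationally connected implies bounded}. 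Induction on the length of the tower, combined with Proposition \ref{3.7} to compare birational models at each stage, gives boundedness of $X$ modulo flops.

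With $X$ varying in a bounded family $\mathcal{X}\to T$ and $(Y_g,A_g)$ bounded, I would then mimic the argument of Theorem \ref{bounded base implies bounded fibration}: construct the moduli map $\phi:U\to\mathcal{H}$ from Theorem \ref{moduli map}, verify its weak boundedness using Lemma \ref{decompose to get weakly bounded} together with the essential diagram of Remark \ref{essential diagram}, and apply Theorem \ref{weakly bounded morphisms are bounded} to parametrise all such $\phi$ by a finite-type scheme. Pulling back the universal polarised family yields birational boundedness of $Y$. Finally, as in the proof of Theorem \ref{bounded family and good minimal model}, each fibre $\mathcal{Y}_s$ of the bounding family $(\mathcal{Y},\mathcal{Q})\to S$ corresponding to some $Y$ admits a good minimal model crepant birational to $Y$ (via Theorem \ref{HMX18 1.2} and Lemma \ref{HX13 2.10}); since $Y$ is terminal with $K_Y\sim_\mathbb{Q}0$, Lemma \ref{HX13 2.4} forces this crepant birational map to be an isomorphism in codimension one, i.e.\ a composition of flops. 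The delicate point throughout is promoting the generalised Calabi--Yau pair $(X,B_X+M_X)$ on the base to an honest klt Calabi--Yau pair uniformly in the data, since nefness of $M_X$ does not by itself give effectivity; this is where the semi-ampleness and base-change properties of the moduli part must be used with care.
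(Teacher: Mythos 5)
The statement you are asked to prove is \cite[Theorem 4.1]{BDCS20}, which the paper quotes as an external input and does not prove; it concerns a single klt, rationally connected projective variety $X$ of dimension $d$ with $lK_X\sim 0$ and nothing else --- there is no fibration $f:Y\to X$, no polarising divisor $A$, and no general fibre anywhere in the hypotheses. Your proposal instead sketches the proofs of the paper's Theorem \ref{boundedness of base in induction step} and Theorem \ref{boundedness of Calabi-Yau with polarised fibration structure:stronger version}: you set up the canonical bundle formula for a contraction $f:Y\to X$, bound the polarised fibres, and run the Mori fibre space tower on the base. None of that data exists for the statement at hand, so the argument does not engage with what is actually being claimed.

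More seriously, the proposal is circular: at the bottom of your tower you say that $X_k$ is a rationally connected klt Calabi--Yau variety ``handled by Theorem \ref{rationally connected implies bounded}'' --- which is precisely the statement you are supposed to be proving. That invocation is legitimate in the paper's proof of Theorem \ref{boundedness of base in induction step}, where this theorem is an imported black box, but it cannot appear as a step in a proof of the theorem itself. Note also that Theorem \ref{Fano fibration tower} requires a nonzero boundary, whereas the natural pair here is $(X,0)$; your mechanism for producing a nonzero boundary (replacing the moduli part $M_X$ by an effective representative) presupposes a fibration structure that the hypotheses do not supply. A genuine proof must come from the arguments of \cite{BDCS20} itself --- or from an independent treatment of rationally connected klt Calabi--Yau varieties of bounded index --- rather than from the fibration machinery of this paper.
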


	\begin{thm}\label{boundedness of base in induction step}
		Assume Theorem \ref{boundedness of Calabi--Yau with polarized fibration structure:stronger version} in dimension $\leq d-1$. Fix a positive integer $l$ and a positive rational number $v$.  Then there exists a projective morphism $\Cz\rightarrow T$, where $T$ is of finite type, such that if $f: Y\rightarrow Z$ is a fibration between normal projective varieties with the following properties
		\begin{itemize}
			\item $Y$ is klt of dimension $d$,
			\item $lK_Y\sim 0$,
			\item $Z$ is rationally connected, and
			\item there is an integral divisor $A$ on $Y$ such that $A_g:=A|_{Y_g}$ is ample and $\vol(A_g)=v$, where $Y_g$ is a general fiber of $f$.
		\end{itemize}
		Then there is a closed point $t\in T$ and a birational contraction $Z\dashrightarrow \Cz_t$.
		
		\begin{proof}
			Applying the canonical bundle formula on $f$, there is a projective generalized klt pair $(Z,B_Z+\M_Z)$ such that 
			$$K_Z+B_Z+\M_Z\sim_{\mathbb{Q}} 0.$$
			By the main result of \cite{Amb05}, $\M$ is $b$-nef and $b$-abundant. Also because $(Z,B_Z+\M_Z)$ is generalized klt, then there exists a $\mathbb{Q}$-divisor $B\sim_{\mathbb{Q}}B_Z+\M_Z$ such that $(Z,B)$ is a klt pair.

			If $B=0$, by \cite[Corollary 1.6]{Bir23}, a general fiber of $f$ is in a bounded family, then the Cartier index $r$ of $\M$ is bounded. Then by \cite[Remark 2.10]{Flo14}, there is a natural number $l'$ such that $l'K_{Z}\sim 0$. By Theorem \ref{rationally connected implies bounded}, $Z$ is bounded up to flops.
			
			If $B\neq 0$, by Theorem \ref{Fano fibration tower}, there exists a birational contraction
			$$\pi :Z\dashrightarrow Z'$$
			to a $\mathbb{Q}$-factorial log Calabi--Yau pair $(Z',B':=\pi_*B)$, $B'\neq 0$ and a tower of fibrations
			$$Z'=Z_0\xrightarrow{p_0}Z_1\xrightarrow{p_1}Z_2\xrightarrow{p_2}\cdots\xrightarrow{p_{k-1}}Z_k.$$
			To prove the result, we only need to show $Z_0$ is bounded up to flops.
			
			Let $f':Y'\rightarrow Z',g:Y'\rightarrow Y$ be the normalization of the closure of the graph of $\pi\circ f:Y\dashrightarrow Z'$, write $K_{Y'}+E\sim_{\mathbb{Q}}g^*K_Y+F$, where $E$ and $F$ are effective without common components. Because $Z\dashrightarrow Z'$ is a birational contraction, the image of $\mathrm{Exc}(g)$ on $Z'$ does not contain any codimension 1 point of $Z'$. We fix a positive number $\delta\ll 1$ and run $K_{Y'}+(1+\delta)E\sim_{\mathbb{Q}} \delta E+F$-MMP with respect to a relative ample divisor over $Z'$, by \cite[Theorem 1.8]{Bir12}, it terminates with a model $W$ such that $E_W+F_W=0$, and $K_W\sim_{\mathbb{Q}}0$. Also because $lK_Y\sim 0$, then $(W,0)$ is crepant birationally equivalent to $(Y,0)$ and $lK_W\sim 0$. Because a general fiber of $W\rightarrow Z'$ is isomorphic to a general fiber of $f: Z\rightarrow Y$, it is easy to see that $W\rightarrow Z'=Z_0$ satisfies all the conditions in Theorem \ref{boundedness of Calabi--Yau with polarized fibration structure:stronger version}. 
			
			We have a tower of fibrations
			$$W\rightarrow Z_0\xrightarrow{p_0}Z_1\xrightarrow{p_1}Z_2\xrightarrow{p_2}\cdots\xrightarrow{p_{k-1}}Z_k.$$
			Let $e_j$ denote the fibration $W\rightarrow Z_j$. Suppose $g\in Z_j$ is a general closed point, because $Z_0\rightarrow Z_j$ is a composition of Mori fiber space, then a general fiber $Z_{0g}$ of $Z_0\rightarrow Z_j$ is rationally connected. By assumption, suppose $W_g$ is the fiber of $W\rightarrow Z_j$ over $g$, then $W_g$ is klt of dimension $<d$ and $lK_{W_g}\sim 0$. Because $g$ is a general point, a general fiber of $W_g\rightarrow Z_{0g}$ is also a general fiber of $W\rightarrow Z_0$, then by Theorem \ref{boundedness of Calabi--Yau with polarized fibration structure:stronger version} in dimension $\leq d-1$, a general fiber of $e_j: W\rightarrow Z_j$ is bounded modulo flops for all $j\leq k-1$. Thus, the Cartier index of the moduli part of $e_j$ is bounded. 
			
			Because $W$ is terminal, $lK_W\sim 0$ and the Cartier index of the moduli part of $e_j$ is bounded, there exist a natural number $l_j$ and a generalized klt pair $(Z_j,B_j+\M_{j,Z_j})$ on $Z_j$ such that $l_j(K_{Z_j}+B_j+\M_{j,Z_j})\sim 0$ and $l_j\M_j$ is $\mathbf{b}$-Cartier. Because $l_j(K_{Z_j}+B_j+\M_{j,Z_j})$ is Cartier, define 
			$$\epsilon=\min \{\frac{1}{l_j},j=0,...,k\},$$ 
			then $\epsilon$ depends only on $l,d$, and $(Z_j, B_j+\M_{j, Z_j})$ is $\epsilon$-lc for each $j=0,\cdots,k$.
			
			If $\mathrm{dim}{Z_k}=0$, then clearly ${Z_k}$ is bounded set.
			If $\mathrm{dim}{Z_k}>0$, by Theorem \ref{Fano fibration tower}, we have $K_{Z_k}\sim_{\mathbb{Q}} 0$. Because $l_k$ is the integer such that $l_k(K_{Z_k}+B_{Z_k}+\M_{Z_k})\sim 0$ and $B_{Z_k}+\M_{Z_k}$ is effective, then $l_kK_{Z_k}\sim 0$. By Theorem \ref{rationally connected implies bounded}, ${Z_k}$ is bounded up to flops.
			
			Next we show that if $Z_j$ is bounded up to flops, then $Z_{j-1}$ is also bounded up to flops for all $j=1,\cdots,k$. Thus, because ${Z_k}$ is bounded up to flops, we have $Z_0$ is bounded up to flops.
			
			Suppose $Z_j$ is bounded up to flops, then there exists a klt variety $Z'_j$ which is isomorphic to $Z_j$ in codimension 1 and is in a bounded family. Because $\phi_j: Z'_j\dashrightarrow Z_j$ is an isomorphism in codimension 1 between projective varieties, it is also a birational contraction. Also since $Z_j$ is $\mathbb{Q}$-factorial, we can apply Proposition \ref{3.7} to the Mori fiber space $Z_{j-1}\rightarrow Z_j$ and obtain a commutative diagram
			$$\xymatrix@R=1.2em@C=1.8em{
				Z'_{j-1}\ar@{-->}[r]\ar[d]   &   Z_{j-1} \ar[d]\\
				Z'_j    \ar@{-->}[r]^{\phi_j}   &   Z_j ,    
			}$$
			where the horizontal arrow $Z'_{j-1}\dashrightarrow Z_{j-1}$ is an isomorphism in codimension 1 of $\mathbb{Q}$-factorial projective varieties. As all horizontal arrows in the diagram are isomorphisms in codimension 1 of $\mathbb{Q}$-factorial projective varieties, it follows that $\rho(Z_{j-1}/Z_j)=\rho(Z'_{j-1}/Z'_j)=1$; hence $Z'_{j-1}\rightarrow Z'_j$ is a $K_{Z'_{j-1}}$-Mori fiber space. 
			
			Because $Z'_j$ is in a bounded family, there exists a number $r_j$ and a very ample divisor $A_j$ on $Z'_j$ such that $A_j^{\mathrm{dim}Z'_j}\leq r_j$. Also because $(Z_{j-1},B_{j-1}+\M_{j-1,Z_{j-1}})$ is $\epsilon$-lc, $Z'_j$ is isomorphic to $Z_j$ in codimension 1 and $K_{Z_{j-1}}+B_{j-1}+\M_{j-1,Z_{j-1}}\sim_{\mathbb{Q}}0$, then $(Z_{j-1}',(\phi_{j-1})_* ^{-1}B_{j-1}+\M_{{j-1},Z'_{j-1}})$ is $\epsilon$-lc. Thus, $Z_{j-1}'\rightarrow Z'_j$ is a generalized $(\mathrm{dim}Z'_{j-1},r_j,\epsilon)$-Fano type fibration. By Theorem \ref{bound Fano type fibration}, $Z'_{j-1}$ is bounded, then $Z_{j-1}$ is bounded up to flops.
		\end{proof}
	\end{thm}
	\begin{proof}[Proof of Theorem \ref{boundedness of Calabi--Yau with polarized fibration structure:stronger version}]
		We prove this by induction on dimension. Assume the result holds in dimension $\leq d-1$.
		
		Let $\Cz\rightarrow T$ be the bounded family defined in Theorem \ref{boundedness of base in induction step}. By the definition of boundedness, we may assume that there is a positive number $V>0$ and a relative very ample divisor $\mathcal{H}$ on $\Cz$ over $T$ such that $\vol(\mathcal{H}_s)\leq V$
		for all $s\in T$.
		
		Let $\M$ be the moduli $\mathbf{b}$-divisor of $Y\rightarrow Z$. Since $K_Y\sim _{\mathbb{Q}}0$, then 
		$$K_Z+B_Z+\M_Z\sim_{\mathbb{Q}} 0.$$
		Let $p:W\rightarrow Z,q:W\rightarrow \Cz_t$ be a common resolution of $Z\dashrightarrow \Cz_t$, write
		$$K_W+B_W+\M_W\sim_{\mathbb{Q}}p^*(K_Z+B+\M_Z)\sim_{\mathbb{Q}}0.$$
		Because $B\geq 0$, then $B_{W,< 0}$ is $p$-exceptional. Let $B_{\Cz_t}=q_*B_W$, then 
		$$K_{\Cz_t}+B_{\Cz_t}+\M_{\Cz_t}\sim _{\mathbb{Q}}0.$$
		Because $Z\dashrightarrow \Cz_t$ is a birational contraction, then $B_{W,< 0}$ is also $q$-exceptional and $B_{\Cz_t}\geq 0$.
		
		Let $h:Y'\rightarrow Y$ be the closure of the graph of $Y\dashrightarrow \Cz_t$ and write $K_{Y'}+\Delta'\sim_{\mathbb{Q}}h^*K_Y$. Because $Z\dashrightarrow \Cz_t$ is a birational contraction and $Y\rightarrow Z$ is a fibration, then $\Delta_{Y',<0}$ is 
		very exceptional over $\Cz_t$. By \cite[Theorem 1.8]{Bir12}, we can run $K_{Y'}+\Delta_{Y',> 0}$-MMP over $\Cz_t$, which terminates with a model $Y''$ such that $\Delta_{Y''}\geq 0$, then we have $K_{Y''}+\Delta_{Y''}\sim_{\mathbb{Q}}0$. Also because $lK_Y\sim0$, then $l(K_{Y''}+\Delta_{Y''})\sim 0$. In particular, $\mathrm{coeff}\Delta_{Y''}$ is in a finite set.

		Notice that $K_{\Cz_t}+B_{\Cz_t}+\M_{\Cz_t}$ is nef and
		$$\vol(\mathcal{H}_t+K_{\Cz_t}+B_{\Cz_t}+\M_{\Cz_t}) = \vol(\mathcal{H}_t)\leq  V.$$ 
		Also because a general fiber of $Y''\rightarrow \Cz_t$ is isomorphic to a general fiber of  $Y\rightarrow Z$, then by Theorem \ref{bounded base implies bounded fibration}, $Y''$ is birationally bounded. Let $\Cy\rightarrow S$ be the corresponding bounded family, then $Y$ is birational equivalent to $\Cy_s$ for a closed point $s\in S$.
		
		After taking a resolution of the generic fiber of $\Cy\rightarrow S$ and passing to a stratification of $S$, we may assume that $\Cy\rightarrow S$ is a projective smooth morphism. Because $K_Y\sim_{\mathbb{Q}}0$ and both $Y$ and $(\Cy_s,0)$ have terminal singularties, then $Y$ is a minimal model of $(\Cy_s,0)$. By \cite[Theorem 2.12]{HX13}, $\Cy$ has a good minimal model over $T$, denote it by $\Cy'\rightarrow S$. By \cite[Lemma 2.4]{HX13}, $(Y,0)$ is crepant birationally equivalent with $(\Cy'_s,0)$ and $Y$ is isomorphic in codimension 1 with $\Cy'_s$. Thus, $Y$ is bounded up to flops.
	\end{proof}

\end{document}